\documentclass{article}

\usepackage{microtype}
\usepackage{graphicx}
\usepackage{subcaption}
\usepackage{booktabs} 

\usepackage{hyperref}

\usepackage[english]{babel}
\usepackage{amsmath,amsfonts,amssymb}
\usepackage{amsthm} 
\usepackage{hyperref}

\usepackage{graphicx}
\usepackage{float}
\usepackage{mathdots}

\usepackage{mathrsfs}
\usepackage{bm}
\usepackage{listings}
\usepackage{color}
\usepackage{csquotes}
\usepackage{mathtools}
\usepackage{comment}
\usepackage{xcolor}
\usepackage{physics}

\DeclareMathOperator*{\esssup}{ess\,sup}

\newcommand{\R}{\mathbb{R}}
\newcommand{\E}{\mathbb{E}}
\newcommand{\supp}{\textnormal{supp}}

\newcommand{\KL}{\textnormal{KL}}

\newcommand{\F}{\mathcal{F}}
\newcommand{\N}{\mathcal{N}}

\newcommand{\D}{\mathcal{D}}

\newcommand{\G}{\mathcal{G}}

\newcommand{\clip}{\textnormal{clip}}
\renewcommand{\div}{\textnormal{div}}

\usepackage[accepted]{icml2026}

\usepackage{amsmath}
\usepackage{amssymb}
\usepackage{mathtools}
\usepackage{amsthm}

\usepackage[capitalize,noabbrev]{cleveref}

\theoremstyle{plain}
\newtheorem{theorem}{Theorem}[section]
\newtheorem{proposition}[theorem]{Proposition}
\newtheorem{lemma}[theorem]{Lemma}
\newtheorem{corollary}[theorem]{Corollary}
\theoremstyle{definition}

\theoremstyle{remark}
\newtheorem{remark}[theorem]{Remark}

\usepackage[textsize=tiny]{todonotes}

\icmltitlerunning{Posterior Concentration of Physics-Informed Neural Networks for Elliptic PDEs}

\begin{document}

\twocolumn[
  \icmltitle{Posterior Concentration of Bayesian Physics-Informed Neural Networks for Elliptic PDEs}



  \icmlsetsymbol{equal}{*}

  \begin{icmlauthorlist}
    \icmlauthor{Yuxuan Zhao}{umn}
    \icmlauthor{Yulong Lu}{umn}

  \end{icmlauthorlist}

  \icmlaffiliation{umn}{School of Mathematics, University of Minnesota, Twin Cities, 127 Vincent Hall 206 Church St. SE Minneapolis, MN 55455}

  \icmlcorrespondingauthor{Yulong Lu}{yulonglu@umn.edu}

  \icmlkeywords{Machine Learning, ICML}

  \vskip 0.3in
]



\printAffiliationsAndNotice{}  

\begin{abstract}
  We study the posterior contraction rate of Bayesian Physics-Informed Neural Networks (PINNs) for solving a general class of elliptic partial differential equations (PDEs). We focus on learning of the elliptic equation with a non-homogeneous Dirichlet boundary condition from independent and noisy measurements collected both inside the domain and on the boundary. Assuming that the PDE admits a  strong solution in a  H\"older space and using with a suitably constructed prior  on the neural network weights, we prove that the posterior distribution  concentrates around the exact solution at a near-minimax rate. Furthermore, the chosen prior is {\em rate-adaptive}: the posterior contracts at an (almost) optimal rate without prior knowledge of the  smoothness level of the exact solution. Our results provide statistical guarantees for uncertainty quantification of PDEs via Bayesian PINNs. 
\end{abstract}

\section{Introduction}
Deep neural networks (DNNs) or multi-layer perceptrons (MLPs) offer various inherent advantages over traditional approaches of scientific computing and data analysis, such as finite element methods, wavelets and kernel methods, which are often hampered by the irregular and nonlinear data structures and the high input dimensions. In contrast, DNNs are capable of approximating a rich class of functions with aforementioned complexities and  can also easily encodes additional complex physical structures, such as  symmetry and other invariant structures. This enables DNNs to tackle outstanding challenges in science and engineering that are modeled by  partial differential equations (PDEs). A  burgeoning line of recent research has explored neural networks as an ansatz for approximating  PDE solutions \cite{yu2018deep,raissi19,sirignano2018dgm,zang2020weak}, offering  algorithmic innovations for solving
 PDEs with extremely promising numerical performances. In this paper, we focus on  Physics-Informed Neural Networks (PINNs) \cite{raissi19} due to their great flexibility and versatility. The core idea of PINNs is to enforce PDE constraints through minimizing a training loss defined by the residual of the governing equations. From a statistical perspective, PINNs produce point estimators of the PDE solutions. However, in practice, due to experimental and simulation constraints, the parameters of
PDE models (e.g. internal and boundary data) are often subject to considerable uncertainties,
because they can be prohibitively costly to obtain fully or to carry out exact discrete
evaluations. These model uncertainties and discretization uncertainties, if assumed negligible, 
can lead to serious misspecification. In this setting, understanding
how those uncertainties propagate to the neural-network solutions is of crucial
importance to quantify the robustness of the deep learning-based PDE  solvers. 

We adopt a Bayesian  perspective on the uncertainty quantification of the
neural-network solutions of elliptic PDEs. Unlike a standard PINN—which produces an approximate solution by minimizing a PDE-residual loss and thus yields only a point estimate, failing to quantify uncertainty induced by noisy or limited data, a Bayesian PINN returns a full posterior distribution over solutions by combining the uncertain information from the likelihood (data) and the prior. Bayesian neural networks, originating in the seminal works of MacKay   \cite{mackay1995probable} and Neal  \cite{neal1995bayesian}, have been extensively studied over the past three decades   \cite{lampinen2001bayesian,titterington2004bayesian,graves2011practical}. Recently, they have been coupled with PINNs for forward and inverse problems in differential equations \cite{yang2021b}; however, rigorous theoretical guarantees for Bayesian PINNs remain limited.

\subsection{Summary of contribution} In this paper, we establish the posterior concentration rate of Bayesian PINNs for a general class of elliptic equations, i.e. identifying the rate at which the posterior distribution on the solution shrinks
towards the exact solution in the large sample limit. With a carefully chosen prior distribution on the neural network weights, we prove that when the exact solution is $\beta$-H\"older smooth, the posterior contracts at a near–minimax-optimal rate with respect to the norm induced by the PINN-loss. It is worth-noting that the resulting Bayesian PINN is rate-adaptive: it achieves the optimal contraction rate without prior knowledge of the unknown solution’s smoothness. 

\subsection{Related work}

\paragraph{Convergence analysis of vanilla PINNs}  The approximation power and statistical performance of DNNs for PDEs have been studied extensively in recent years within variational frameworks such as PINNs \cite{mishra2023estimates,de2022error,shin2023error,lu2021machine}  and the Deep Ritz Method \cite{e2018,jiao2024error}. Under Sobolev or H\"older smoothness assumptions on the PDE solution, these works derive quantitative, nonparametric estimation rates that generally suffer from the curse of dimensionality. More recently, a complementary line of research \cite{chen2021representation,chen2023regularity,weinan2022some,lu2022priori,lu2021priori} established dimension-independent rates for certain high-dimensional PDEs by imposing Barron-space regularity.  Among the aforementioned works, \cite{lu2021machine} is closest to our setting: it proves minimax-optimal estimation rates for both PINNs and the Deep Ritz method for a class of elliptic PDEs. However, Our contribution differs in several key respects. First, we establish posterior contraction rates for Bayesian PINNs, whereas \cite{lu2021machine} analyzes point estimators (effectively maximum-likelihood/ERM-type procedures under the chosen variational losses). Second, with an appropriate prior, our procedure is adaptive to the (unknown) smoothness of the exact solution, while the rates in \cite{lu2021machine} are non-adaptive.  Finally, our data model includes noisy measurements of both the interior source term and a nonhomogeneous Dirichlet boundary condition, and the resulting PINN loss penalizes both interior and boundary residuals. In contrast,  \cite{lu2021machine} assumes homogeneous Dirichlet boundary condition, ignores the boundary residue in their PINN loss, and enforces the neural networks vanish entirely on the boundary, which is challenging to satisfy in practice.  

\paragraph{Bayesian neural networks and posterior contraction}
 Bayesian neural networks \cite{mackay1995probable,neal1995bayesian} provide a principled framework for uncertainty quantification of neural-network predictors. This approach combines the  expressive power of DNNs and  probabilistic reasoning of  Bayesian inference leading to strong  empirical generalization capability and superior uncertainty estimates \cite{izmailov2021bayesian,wilson2020bayesian}.  Despite the remarkable empirical successes of Bayesian neural networks, their theoretical understandings are relative scattered. Building on the DNN approximation theory of \cite{schmidt2020nonparametric}, the pioneering work of  \cite{polson2018posterior},  proved that the posterior distribution of Bayesian neural networks with a class of spike-and-slab priors concentrate at the minimax rate for nonparametric regression of $\beta$-H\"older smooth functions.   \cite{kong2023masked}  obtained a similar contraction result for masked Bayesian
neural networks, which allows more efficient computation in practice. The work by  \cite{kong2025posterior} extended the posterior contraction result of \cite{polson2018posterior} using non-sparse Gaussian priors.  The work by \cite{lee2022asymptotic} proved the posterior contraction rate for Besov functions using a class of shrinkage priors. These results were further extended for anisotropic and composite Besov functions   using heavy-tailed priors \cite{egels2025posterior} or spike-and-slab priors and shrinkage priors \cite{lee2025posterior}. We also note that variational Bayes \cite{blei2017variational} is widely used as a tractable alternative to exact posterior inference (e.g., mean-field approximations), and recent results show that the resulting variational posteriors can achieve the same near-optimal rates \cite{cherief2020convergence,bai2020efficient,ohn2024adaptive}.

While posterior contraction for Bayesian neural networks is now well developed for classical nonparametric regression, little is understood for  physics-informed learning of PDEs.   The  goal of the present work is to fill this gap by  establishing the posterior contraction  rate for Bayesian  PINNs.
 
\paragraph{Bayesian PINNs}   

In the empirical study of \cite{yang2021b}, a Bayesian perspective of PINN was proposed to quantify the uncertainty of the neural network solution obtained by PINN, with application spanning real-world dynamical systems \cite{linka2022bayesian}, turbulence modeling \cite{shukla2026uncertainty}, and subsurface seismic tomography \cite{gou2023bayesian}. In particular, the posterior consistency was observed numerically (see Figure 4 of \cite{yang2021b}), but no theoretical analysis was provided. The recent paper \cite{sun2024estimation} is  most closely related to ours, which established posterior contraction rates for Bayesian PINNs when estimating the PDE solution along with a finite number of unknown finite-dimensional parameters, using independent noisy observations of the solution itself.  Our setting differs from \cite{sun2024estimation} in a crucial aspect: we observe only the source term and the nonhomogeneous boundary data, rather than the solution. This indirect observation model leads to a statistically harder problem; consequently, although our rates are minimax optimal under this data model, they are necessarily slower than those in \cite{sun2024estimation}.

\section{Preliminaries}

\subsection{Notation}
 Given a real-valued function $f: \Omega \to \R$ on a domain $\Omega \subset \R^d$ and $1 \leq p < \infty$, we denote $\|f\|_{L^p(\Omega)} = (\int_\Omega |f(x)|^p dx)^{1/p}$. Similarly, consider $g: \partial \Omega \to \R$, and denote $\|g\|_{L^p(\partial \Omega)} = (\int_{\partial \Omega} |g(x)|^p d\sigma(x))^{1/p}$, where $\sigma$ is the surface measure on the boundary $\partial \Omega$. Denote $|\Omega|$ as the volume of $\Omega$ and $|\partial \Omega|$ as the surface measure of $\partial \Omega$. For a multi-index $\bm{\alpha} = (\alpha_1, \ldots, \alpha_d) \in \mathbb{N}_0^{d}$, we denote $|\bm{\alpha}| \coloneqq \sum_{i=1}^d \alpha_i$ and denote the partial derivative as $\partial^{\bm{\alpha}} \coloneqq \partial^{\alpha_1} \ldots \partial^{\alpha_d}$. For $N \in \mathbb{N}$, denote $[N] \coloneqq \{1, \ldots, N\}$. For a vector $b \in \R^d$, denote the $l_p$-norm as $\|b\|_p$ for $1 \leq p \leq \infty$, and let $\|b\|_0$ be the number of nonzero entries of $b$. Given a matrix $A \in \R^{d \times d}$, let $\|A\|_\infty \coloneqq \max_{i, j \in [d]} |A_{ij}|$ and $\|A\|_0$ be the number of nonzero entries of $A$, and define $\nabla A \in \R^d$ by $(\nabla A)_i \coloneqq \sum_{j=1}^d \partial_j A_{ji}$ if $A$ is differentiable. We use $\textnormal{Ber}(p)$ to represent a Bernoulli distribution with a success probability $p$ and $\textnormal{U} (I)$ for a uniform distribution on an interval $I$ respectively. Denote $\N(\mu, \sigma^2)$ as the Gaussian distribution with mean $\mu$ and variance $\sigma^2$. Given a probability distribution $P$ and a $P$-measurable function $f$, we note $P(f) \coloneqq  \int f dP$. We use the notation $a_n \lesssim b_n$ (resp. $a_n \gtrsim b_n$), or equivalently $a_n = O(b_n)$ to indicate that $a_n \leq C b_n$ (resp. $a_n \geq C b_n$) for some universal constant $C > 0$ independent of $n$. We use the notation $a_n = o(b_n)$ to denote that $\frac{a_n}{b_n} \rightarrow 0$ as $n \rightarrow \infty$. For a sequence of random variables $\{X_n\}_{n \in \mathbb{N}}$, we write $X_n = o_p(1)$ to mean that $X_n \rightarrow 0$ in probability as $n \rightarrow \infty$. 

\paragraph{Deep neural network spaces} We denote the ReLU activation function as $\sigma_0(x) \coloneqq \max\{x, 0\}$, and the ReLU powers as $\sigma_k(x) \coloneqq (\max\{x, 0\})^k$. 
The parameter space of $L$-layer $W$-width neural networks with $B$-bounded and $S$-sparse weights is defined as
\begin{align*}
    & \Theta(L, W, S, B) \coloneqq \\   & \big\{   \theta = (W^{(1)}, b^{(1)}, \ldots, W^{(L)}, b^{(L)}): \\
     & W^{(1)} \in \R^{d \times W}, b^{(1)} \in \R^W,  \\
     & W^{(l)} \in \R^{W \times W}, b^{(l)} \in \R^W, l = 2, \ldots,L-1, \\
     & W^{(L)} \in \R^{W \times 1}, b^{(L)} \in \R, \\
     & \max_{l \in [L]}\{\|W^{(l)}\|_\infty, \|b^{(l)}\|_\infty\} \leq B, \\
     & \sum_{l = 1}^L \|W^{(l)}\|_0 + \|b^{(l)}\|_0 \leq S \big\}. 
\end{align*}
Let $f_\theta(\cdot) \coloneqq (W^{(L)} \sigma_k (\cdot) + b^{(L)}) \circ \cdots \circ (W^{(2)} \sigma_k (\cdot) + b^{(2)}) \circ (W^{(1)} (\cdot) + b^{(1)})$ be the output function of a neural network with parameter $\theta$ and $\sigma_k$ activation function. In this work, we only consider DNNs with $\sigma_3$ activation functions.  Then we can define the DNN-space consisting of functions $f_\theta$ with $\sigma_3$ being the activation functions, i.e. 
\begin{align*}
    \F(L, W, S, B) \coloneqq \{\clip \circ f_\theta : \theta \in \Theta(L, W, S, B)\},
\end{align*}
where $\clip$ is a $C^2$-cutoff function implemented by a single-layer $\sigma_3$-network, and the detailed construction can be found in Appendix \ref{construction_cutoff}. With the clip function, our function class becomes  uniformly bounded, which plays a critical role in bounding the statistical complexity of the network class. 

Let $T$ to be the total number of parameters of a function $f_\theta$ with $\theta \in \Theta(L, W, S, B)$, i.e.
\begin{align*}
    T \coloneqq dW + W + (L-1)(W^2 + W) + W + 1,
\end{align*}
and denote $\gamma \in \{0, 1\}^T$ as the architecture (the location of those nonzero parameters), i.e. $\gamma_i = 1$ if and only if the $i$-th parameter is nonzero. 

\paragraph{Sobolev functions}
Let $1 \leq p < \infty$ and $k \in \mathbb{N}$. The Sobolev spaces $W^{k, p}(\Omega)$ consists of all functions $u: \Omega \to \R$ such that for each multi-index $\bm{\alpha}$ with $|\bm{\alpha}| \leq k$, the weak derivatives $D^{\bm{\alpha}} u$ exist and belong to $L^p(\Omega)$, i.e. 
\begin{equation*}
    \begin{aligned}
        W^{k, p}(\Omega) = \{u: \Omega \to \R; \|u\|_{W^{k, p}(\Omega)} < \infty\},
    \end{aligned}
\end{equation*}
where the Sobolev norm $\|\cdot\|_{W^{k, p}(\Omega)}$ is defined by
\begin{equation*}
    \|u\|_{W^{k, p}(\Omega)} \coloneqq \sum_{|\bm{\alpha}| \leq k } \|D^{\bm{\alpha}} u\|_{L^p(\Omega)}.
\end{equation*}
Also, we define the Sobolev space $W^{k, \infty}(\Omega)$ as 
\begin{equation*}
    W^{k, \infty} (\Omega) = \{u: \Omega \to \R; \|u\|_{W^{k, \infty}(\Omega)} < \infty\},
\end{equation*}
where the Sobolev norm $W^{k, \infty}(\Omega)$ is defined via essential supremum, i.e. 
\begin{equation*}
    \|u\|_{W^{k, \infty}(\Omega)} \coloneqq \sum_{\bm{\alpha} \leq k} \esssup_\Omega |D^{\bm{\alpha}} u|. 
\end{equation*}

\paragraph{Hölder functions} Let $\beta > 0$, the Hölder functions over some domain $\Omega$ are defined as
\begin{align*}
    C^{\beta} (\Omega) \coloneqq  \left\{f: \Omega \to \R; \|f\|_{C^{\beta} (\Omega)} < \infty \right\},
\end{align*}
where $\|f\|_{C^{\beta} (\Omega)}$ denotes the Hölder norm defined by
\begin{align*}
    \|f\|_{C^{\beta} (\Omega)} & \coloneqq \sum_{\bm{\alpha}: |\bm{\alpha}| \leq q} \sup_{x \in \Omega} |\partial^{\bm{\alpha}} f(x)| \\  & +  \sum_{\bm{\alpha}: |\bm{\alpha}| = q} \sup_{x, y \in \Omega, x \neq y} \frac{|\partial^{\bm{\alpha}} f(x) - \partial^{\bm{\alpha}} f(y)|}{|x - y|^{\beta-q}},
\end{align*}
where $q$ is the greatest integer less than $\beta$.

\subsection{Elliptic PDEs} \label{pde_assumption}

We consider the following elliptic equation with Dirichlet boundary condition:
\begin{equation}\label{elliptic_pde}
\begin{aligned}
    - \div (A \nabla u) + V u = f  \quad & \textnormal{in} \ \Omega, \\
    u = g \quad & \textnormal{on} \ \partial \Omega, 
\end{aligned}
\end{equation}
where for simplicity we assume 
that the domain $\Omega \subset \R^d$ is a bounded open subset  with smooth boundary $\partial \Omega$. We assume that the coefficients $A$ and $V$ are sufficiently smooth, i.e. $A, V \in C^\infty(\bar{\Omega})$, and there exist some strictly positive constants $r_{\text{min}}, C_A, V_{\text{min}}, V_{\text{max}}$ such that $ r_{\text{min}} I \preceq A $, $\sup_{x \in \Omega}\|A(x)\|_{\infty} \vee \|\nabla A(x)\|_{\infty} \leq C_A$, and $0 < V_{\text{min}} \leq V \leq V_{\text{max}}$ over $\Omega$. 
Moreover, we assume that $f \in C^{\beta-2}(\bar{\Omega})$ and $g \in C^{\beta}(\partial{\Omega})$ with $\beta > 2$.
 The standard elliptic PDE theory \cite{grisvard2011elliptic} guarantees a unique strong solution $u^* \in C^{\beta}(\bar{\Omega})$ to $\eqref{elliptic_pde}$. 

\subsection{Physics-Informed Neural Network}

We focus on using Physics-Informed Neural Networks to solve $\eqref{elliptic_pde}$. Specifically, for a  constant $\lambda > 0$ and $u \in H^2(\Omega)$, we define the (population) PINN-loss by 
\begin{equation}
    \label{cont_pinn_loss}
    \mathcal{E}(u) \coloneqq \|-\div(A \nabla u) + V u - f\|_{L^2(\Omega)}^2 + \lambda 
    \|u - g\|_{L^2(\partial \Omega)}^2. 
\end{equation} 

\paragraph{Empirical PINN-loss}
Let $\{X_i\}_{i=1}^n$ be an i.i.d. sequence of random variables distributed according to the uniform distribution over $\Omega$, and let $f_i = f(X_i) + \epsilon_i$ be noisy observations of the right hand side of the PDE problem $\eqref{elliptic_pde}$ with $\epsilon_{i} \overset{i.i.d.}{\sim} \N(0, 1)$ being independent from $X_i$. Similarly, let $\{Y_j\}_{j=1}^n$ to be independently drawn of the uniform distribution over $\partial \Omega$, and set $g_j = g(Y_j) + \eta_j$ with $\eta_j \overset{\textnormal{i.i.d.}}{\sim } \N(0, 1)$ being independent from $Y_j$. Denote $\D_1^{(n)} \coloneqq \{X_i, f_i\}_{i=1}^n$, $\D_2^{(n)} \coloneqq \{Y_j, g_j\}_{j=1}^n$, and $\D^{(n)} \coloneqq \D_1^{(n)} \cup \D_2^{(n)}$. The empirical counterpart of $\eqref{cont_pinn_loss}$ is  defined as
\begin{equation} \label{empirical_loss}
\begin{aligned}
     & \mathcal{E}_n(u; \D^{(n)}) \\ \coloneqq & \frac{|\Omega|}{n} \sum_{i=1}^n |-\div(A \nabla u)(X_i) + V(X_i) u(X_i) - f_i|^2 \\
    & + \lambda \frac{|\partial \Omega|}{n} \sum_{i=1}^n |u(Y_j) - g_j|^2. 
\end{aligned}
\end{equation}
Moreover, we define the empirical distances 
\begin{equation}\label{empirical_dist1}
    \begin{aligned}
         & \|-\div (A \nabla (u - u^*)) + V(u - u^*) \|_{n, 1}^2 \\\coloneqq & \frac{|\Omega|}{n} \sum_{i=1}^n (-\div (A \nabla (u - u^*))(X_i) + V(X_i)(u - u^*)(X_i))^2 \\
         = & \frac{|\Omega|}{n} \sum_{i=1}^n (-\div(A \nabla u)(X_i) + V(X_i) u(X_i) - f(X_i))^2,
    \end{aligned}
    \end{equation}
    and 
    \begin{equation}\label{empirical_dist2}
    \begin{aligned}
         \|u - u^*\|_{n, 2}^2  \coloneqq & \frac{ |\partial \Omega|}{n} \sum_{j=1}^n (u(Y_j) - u^*(Y_j))^2 \\=& \frac{|\partial \Omega|}{n} \sum_{j=1}^n (u(Y_j) - g(Y_j))^2. 
    \end{aligned}
    \end{equation}

\subsection{Priors} \label{prior_construction}
We use a  spike-and-slab prior $\Pi_\theta$ on the parameter space $\Theta \coloneqq \Theta(L) = \cup_{W \geq 1} \cup_{S \geq 1} \cup_{B > 0} \Theta(L, W, S, B)$. Specifically, we take $L = O(1)$ to be some fixed constant. For the width $W$, we assign
\begin{equation*}
    \Pi_\theta(W = w) = \frac{\lambda_W^w }{(e^{\lambda_W} - 1) w!}, \quad w = 1, 2, \ldots,
\end{equation*}
where $\lambda_W$ is some positve constant.  
Given $W=w$, let $T = O(W^2) = O(w^2)$ be the total number of parameters, and treat the architecture $\gamma$ as unknown with a Bernoulli distribution
\begin{equation*}
    \gamma_i| W=w \sim \textnormal{Ber}((1 + T^{\lambda_S})^{-1}), \quad i \in [T], 
\end{equation*}
for some positive constant $\lambda_S$.
In other word, the sparsity level $S$ is associated with a binomial prior
\begin{align*}
    & \Pi_\theta(S = s | W = w) \\ = & 
    \begin{pmatrix}
        T \\ s
    \end{pmatrix}
    \left(\frac{1}{1 + T^{\lambda_S}}\right)^s
    \left(1-\frac{1}{1 + T^{\lambda_S}}\right)^{T-s} \\  =& 
    \begin{pmatrix}
        T \\ s
    \end{pmatrix}
    \frac{T^{\lambda_S (T-s)}}{(1 + T^{\lambda_S})^T}. 
\end{align*}
We further endow the size of the parameters with an exponential distribution
\begin{align*}
     B \sim \textnormal{Exp}(\lambda_B),
\end{align*}
for some $\lambda_B > 0$, and associate the prior on each entry of parameters as 
\begin{equation*}
    \theta_i| (B = b, \gamma_i = 1) \sim \textnormal{U}([-b, b]), \quad \theta_i|\gamma_i=0 \sim \delta_0.
\end{equation*}
The constants $\lambda_W, \lambda_S, \lambda_B$ will be determined later on. 

Our choice of the spike-and-slab prior  is inspired by \cite{polson2018posterior} but differs from theirs in several important aspects. First, we model sparsity by assigning each neuron an activation probability, which in turn induces a distribution over the overall sparsity level; by contrast, \cite{polson2018posterior} posits an exponentially distributed sparsity level directly. Second, we place an exponential prior on the amplitude  $B$ of weights, thereby allowing unbounded weights,  while \cite{polson2018posterior} restricts $B\leq 1$. The unboundedness of $B$ enables us to take advantage of the approximation capacity of our DNN-class for the target solution (see Proposition \ref{approximation}), which will be essential to establish the key ingredients for the posterior contraction rate (see Lemma \ref{contraction_empirical}).

Recall the DNN-class $\F \coloneqq \{\clip \circ f_\theta: \bar{\Omega} \to \R;  \theta \in \Theta\}$. Note that $\F \subset H^2(\bar{\Omega})$ since each function in $\F$ has $\sigma_3$ as activation functions. Furthermore, we equip $\F$ with trace sigma-algebra, denoted as $\Sigma_\F$, induced from the Borel sigma-algebra in $H^2(\bar{\Omega})$. Then we can define a prior $\Pi$ on $\F$ as
\begin{align} \label{prior}
    \Pi(E) \coloneqq \Pi_\theta(\{\theta \in \Theta: \clip \circ f_\theta \in E \}), \quad E \in \Sigma_{\F}
\end{align}
Note that for $r > 0$, the set 
$\{u \in \F: \|-\div(A \nabla u) + V u - f\|_{L^2(\Omega)} \leq r\} \in \Sigma_\F$ is measurable since the function $u \mapsto \|-\div(A \nabla u) + V u - f\|_{L^2(\Omega)}$ is continuous. Similarly, $\{u \in \F: \|u-g\|_{L^2(\partial \Omega)} \leq r\} \in \Sigma_{\F}$ is also measurable. 

\subsection{Posterior contraction for deep learning}
Given the training data $\D^{(n)} = \D_1^{(n)} \cup \D_2^{(n)} = \{X_i, f_i\}_{i=1}^n \cup \{Y_j, g_j\}_{j=1}^n$, denote $p_u^{(n)}$ as the likelihood density of $\D^{(n)}_1$ under $u$, and $P_u^{(n)}$ as its corresponding distribution. Similarly, denote $q_u^{(n)}$ as the likelihood density of $\D^{(n)}_2$ under $u$, and $Q_u^{(n)}$ as its distribution. Then by Bayes' theorem, the posterior mass is given by
\begin{align*}
    \Pi \left(E | \D^{(n)}\right) = \frac{\int_E p_u^{(n)} q_u^{(n)} d\Pi(u)}{\int_{\F} p_u^{(n)} q_u^{(n)} d\Pi(u)}, \quad E \in \Sigma_F.
\end{align*} 

We aim to establish the property of posterior contraction, which measures how fast the posterior mass concentrates around $u^*$ as $n \to \infty$. Following the standard literature on the general theory of posterior contraction rate \citep[Chapter 8]{ghosal2017fundamentals} and posterior contraction for deep learning \citep[Section 5]{polson2018posterior}, we define an $(M\epsilon)$-ball centered around $u^*$ by 
\begin{align*}
    A_{\epsilon, M} = \{u \in \F: \mathcal{E}(u) \leq M^2 \epsilon^2\}.
\end{align*}
Our goal is to show that for any $M_n \to \infty$
\begin{align*}
    \Pi(A_{\epsilon_n, M_n}^c | \D^{(n)}) \to 0, \quad n \to \infty 
\end{align*}
in $P_{u^*}^{(n)} Q_{u^*}^{(n)}$-probability, or equivalently,
\begin{align*}
    P_{u^*}^{(n)} Q_{u^*}^{(n)} \left[ \Pi(A_{\epsilon_n, M_n}^c | \D^{(n)})\right] \to 0, \quad n \to \infty.
\end{align*}

\section{Main Results} \label{main_results}

\subsection{Posterior contraction with rate adaption}
We state our first main theorem that establishes the posterior distribution concentrates around the ground truth $u^*$  at a rate $\epsilon_n := n^{-\frac{\beta-2}{d + 2(\beta-2)}} (\log n)^{1/2}$ without knowing the smoothness level of $u^*$.

\begin{theorem}\label{upper_bound}
     Assume that $u^*  \in C^\beta(\bar{\Omega})$ is the solution to $\eqref{elliptic_pde}$ with $f \in C^{\beta-2}(\bar{\Omega})$ and $g \in C^\beta(\partial{\Omega})$.  Assume that the smoothness level $\beta$ satisfies that  $2 < \beta \leq \beta^\ast$ for some $\beta^\ast > 2$, and $\|u^*\|_{C^{\beta}(\bar{\Omega})} \leq K$ for some $K > 0$. Then there exists 
     a prior $\Pi$ of the form $\eqref{prior}$ on a DNN-space $\F$ such that
     \begin{align} \label{posteior_contraction}
         \Pi(u \in \F: \mathcal{E}(u) > M_n^2 \epsilon_n^2 | \D^{(n)}) \to 0,
     \end{align}
     in $P_{u^*}^{(n)} Q_{u^*}^{(n)}$-probability as $n \to \infty$ for any $M_n \to \infty$. 
\end{theorem}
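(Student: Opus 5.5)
I will follow the general Ghosal–Ghosh–van der Vaart recipe \citep[Chapter 8]{ghosal2017fundamentals} in the form used by \cite{polson2018posterior}. The argument first establishes contraction in the empirical distances \eqref{empirical_dist1}--\eqref{empirical_dist2} (this is the role of Lemma \ref{contraction_empirical}) and then transfers it to the population loss $\mathcal{E}(u)$. Write $\mathcal{L}u := -\div(A\nabla u)+Vu$. Conditionally on the design points, the data $\D^{(n)}$ form a Gaussian regression model with regression functions $\mathcal{L}u(X_i)$ and $u(Y_j)$. So the Kullback--Leibler divergence and the second KL variation between $P^{(n)}_{u^*}Q^{(n)}_{u^*}$ and $P^{(n)}_{u}Q^{(n)}_{u}$ are both bounded by a constant times
\[
n\big(\|\mathcal{L}(u-u^*)\|_{n,1}^2+\|u-u^*\|_{n,2}^2\big),
\]
and in turn by $n\|u-u^*\|_{W^{2,\infty}(\Omega)}^2$. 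The three ingredients are:
\begin{itemize}
\item[(i)] \emph{Prior mass:} $\Pi(u:\|u-u^*\|_{W^{2,\infty}}\le \epsilon_n)\ge e^{-Cn\epsilon_n^2}$.
\item[(ii)] \emph{Sieve:} a sieve $\F_n$ with $\Pi(\F\setminus\F_n)\le e^{-(C+4)n\epsilon_n^2}$.
\item[(iii)] \emph{Entropy:} $\log N(\epsilon_n,\F_n,\|\cdot\|_{W^{2,\infty}})\lesssim n\epsilon_n^2$.
\end{itemize}
Since empirical norms are dominated by the sup-type norm, (iii) yields exponentially powerful tests for the Gaussian regression via the standard likelihood-ratio construction.

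\emph{Prior mass (i).} I invoke Proposition \ref{approximation}. For a target accuracy $\epsilon_n$ in $W^{2,\infty}$, it gives a $\sigma_3$-network of fixed depth $L$ with width $W_n\asymp \epsilon_n^{-d/(\beta-2)}$, sparsity $S_n\lesssim W_n$ (up to logs), and weights bounded by $B_n$ growing at most polynomially in $n$. With $\epsilon_n$ as in the statement, $W_n\log n\asymp n\epsilon_n^2$. The key feature is that $\beta$ enters only through the choice of $W_n$, never through the prior, which is why the prior is adaptive. The prior probability of the approximating configuration factors into four terms:
\begin{itemize}
\item $\Pi_\theta(W=W_n)\ge e^{-W_n\log W_n}$;
\item the probability of the exact sparsity pattern, at least $(1+T^{\lambda_S})^{-S_n}(1-(1+T^{\lambda_S})^{-1})^{T}\ge e^{-cS_n\log n}$;
\item $\Pi(B\in[B_n,2B_n])\ge e^{-\lambda_B 2B_n}$;
\item the uniform slab probability that each of the $S_n$ active weights lies within $\delta$ of its target, roughly $(\delta/B_n)^{S_n}$.
\end{itemize}
Here $\delta$ is chosen so that a Lipschitz-in-parameters bound for $f_\theta$ in $W^{2,\infty}$ holds; that bound is polynomial in $(W B)^{L}$ since $L=O(1)$ and $\sigma_3$ is smooth. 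The clip map is $C^2$ and inactive near $u^*$ (since $\|u^*\|_\infty\le K$), so it does not spoil the approximation. All four factors are $\ge e^{-Cn\epsilon_n^2}$ provided $B_n\lesssim n\epsilon_n^2$. I need to check this from Proposition \ref{approximation}, and if it fails, truncate and rescale weights.

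\emph{Sieve and entropy (ii)--(iii).} Take $\F_n=\{\clip\circ f_\theta:\theta\in\Theta(L,\bar W_n,\bar S_n,\bar B_n)\}$ with $\bar W_n=\bar S_n= C' n\epsilon_n^2/\log n$ and $\bar B_n=C'n\epsilon_n^2$. The Poisson-type tail of $W$, the binomial tail of $S$ (the mean is $T^{1-\lambda_S}$, small when $\lambda_S>1$), and the exponential tail of $B$ give $\Pi(\F_n^c)\le e^{-(C+4)n\epsilon_n^2}$ once $C'$ is large. The entropy bound comes from the same parameter-Lipschitz estimate: the log covering number is at most $\bar S_n\log(\bar W_n \bar B_n n)\lesssim n\epsilon_n^2$. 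Plugging (i)--(iii) into the general theorem gives $\Pi\big(\|\mathcal{L}(u-u^*)\|_{n,1}^2+\|u-u^*\|_{n,2}^2>M_n^2\epsilon_n^2\mid\D^{(n)}\big)\to0$, which is the content of Lemma \ref{contraction_empirical}.

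\emph{Transfer to $\mathcal{E}$: the main obstacle.} Note that $\mathcal{E}(u)=\|\mathcal{L}(u-u^*)\|_{L^2(\Omega)}^2+\lambda\|u-u^*\|_{L^2(\partial\Omega)}^2$ is exactly the population version of the empirical distance. I need a uniform ratio bound over $\F_n$: with high probability, every $u\in\F_n$ with $\mathcal{E}(u)\ge M^2\epsilon_n^2$ has empirical loss at least $c\,\mathcal{E}(u)$. I will get this from a localized Bernstein / peeling argument (Talagrand or a simple union bound over an $\epsilon_n$-net of $\F_n$ in $W^{2,\infty}$). The squared residuals $(\mathcal{L}(u-u^*))^2$ are uniformly bounded by a polynomial in $\bar B_n$, because the clip bounds $u$ but not its second derivatives. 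This is the delicate point. The variance of each summand is at most its sup times its mean, so Bernstein over $e^{Cn\epsilon_n^2}$ net points requires $\sup|\mathcal{L}(u-u^*)|^2\cdot n\epsilon_n^2\lesssim n\epsilon_n^2\cdot(\text{const})$ up to logs. I would first try to control the $C^2$ norm of $\clip\circ f_\theta$ on the sieve by a power of $\log n$, absorbed into the $(\log n)^{1/2}$ factor of $\epsilon_n$. Failing that, I would shrink the sieve further and pay a logarithmic factor. Combining the ratio bound with the empirical contraction, and intersecting with $\F_n$ (whose complement has vanishing posterior mass by (ii) and the evidence lower bound), yields \eqref{posteior_contraction}.
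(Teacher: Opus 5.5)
Your first half follows the paper's route: empirical-distance contraction from prior mass, a sieve with $\bar W_n,\bar S_n\asymp n\epsilon_n^2/\log n$ and $\bar B_n\asymp n\epsilon_n^2$, and parameter-Lipschitz entropy bounds, i.e.\ Lemma \ref{contraction_empirical} together with the verification of \eqref{entropy_condition}--\eqref{sieve_condition}. Stating the prior-mass bound in the deterministic $W^{2,\infty}$ norm is a harmless, slightly cleaner variant. The genuine gap is the transfer to $\mathcal{E}$. You leave it open, and neither of your fallbacks can close it.

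You are right that the cutoff bounds $u$ but not $\nabla u,\nabla^2 u$. The paper's proof of Proposition \ref{generalization} nevertheless asserts exactly this uniform bound ``due to the cutoff'', so it does not supply the missing idea. The sieve is also not yours to shrink, because its size is dictated by the prior. Since $\Pi_\theta(B>b)=e^{-\lambda_B b}$, networks with weights of order $b$ carry prior mass $\gtrsim e^{-\lambda_B b}$. Many of them have second derivatives polynomially large in $b$ in the region where the clip is the identity: for $L=2$, $a\,\sigma_3(b(x_1-t))$ has $\partial_{11}$-derivative $\asymp a^{2/3}b^2$ where its value is of order one. Hence \eqref{sieve_condition} forces every admissible sieve to contain such networks with $b$ of order $n\epsilon_n^2$. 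Consequently:
\begin{itemize}
\item the envelope $U_n$ of $\{(\mathcal{L}(u-u^*))^2:u\in\F_n\}$ is polynomial in $n$;
\item a polylog bound on $\|\clip\circ f_\theta\|_{C^2}$ over $\F_n$ is impossible;
\item any Bernstein, Talagrand or peeling bound at confidence $1-e^{-cn\epsilon_n^2}$ carries a term of order $U_n\epsilon_n^2$, which is a polynomial loss, not a logarithmic one.
\end{itemize}

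Worse, the uniform ratio bound you want is false in general, not merely hard to prove. Take $d\ge 2$. Given the data, after a rotation you can almost surely pick $t$ with $\max_i X_{i,1}\vee\max_j Y_{j,1}<t<\sup_{\Omega}x_1$. Add to the network of $u_n^*$ one extra neuron per hidden layer, computing $v(x)=c\,\sigma_3(\cdots\sigma_3(b(x_1-t)))$.
\begin{itemize}
\item This uses $O(L)$ extra parameters and weights $\le B_n$, so the resulting $u$ lies in $\F_n$.
\item Since $v\equiv 0$ on $\{x_1<t\}$, a neighbourhood of every $X_i$ and $Y_j$, both empirical distances of $u$ equal those of $u_n^*$, i.e.\ they are $O(\epsilon_n^2)$.
\item Consider the slab of width $w$ beyond $\{x_1=t\}$ on which $v$ rises to $1/2$, so the clip is inactive there. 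On it $|\mathcal{L}v|\gtrsim w^{-2}$, because $A_{11}\ge r_{\min}$. Weights $\asymp B_n$ allow $w\lesssim B_n^{-4/3}$.
\item Hence $\mathcal{E}(u)\gtrsim w^{-3}$ times the $(d-1)$-dimensional measure of $\Omega\cap\{x_1=t+w\}$. At least for $\beta$ close to $2$, this exceeds $M^2\epsilon_n^2$ by a power of $n$.
\end{itemize}
So no bound uniform over $\F_n$ can convert empirical contraction into population contraction. These ``invisible'' modifications have the same likelihood as $u_n^*$ and only a polynomially smaller prior mass. Therefore they also cannot be discarded by the usual numerator/evidence comparison at scale $e^{-(\xi+2)n\epsilon_n^2}$. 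Closing the argument would plausibly require one of: a genuinely posterior-specific estimate, a prior that controls $C^2$ norms, or settling for a weaker (truncated, Hellinger-type) discrepancy in place of $\mathcal{E}$.
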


\begin{corollary} \label{upper_bound_posterior_mean}
    Assume the same setting as in Theorem \ref{upper_bound}. Moreover, let $\Pi$ and the DNN-space $\F$ to be the same as in Theorem \ref{upper_bound}. Define the posterior mean as $\bar{u}_n \coloneqq \E_{u \in \Pi} \left[u | \D^{(n)}\right]$. Then with high probability,  $\mathcal{E}(\bar{u}_n)  \lesssim \epsilon_n^2$ as $n \rightarrow \infty$. 
    
\end{corollary}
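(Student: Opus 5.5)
The plan is to deduce the corollary from Theorem \ref{upper_bound} through convexity of $\mathcal{E}$, a bound on the posterior mass outside a fixed-radius ball, and uniform boundedness of $\F$.

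\textbf{Convexity.} The map $u\mapsto \mathcal{E}(u)$ is convex, since it is a sum of squared norms of affine functions of $u$. Every $u\in\F$ is clipped by a $C^2$ cutoff, so it is uniformly bounded in $C^2(\bar\Omega)$ and hence in $H^2$. The posterior mean $\bar u_n$ is therefore well defined as a Bochner integral in $H^2(\Omega)$, and $\mathcal{E}$ is continuous on $H^2$. Jensen's inequality then gives
\begin{equation*}
\mathcal{E}(\bar u_n)\le \int_\F \mathcal{E}(u)\,d\Pi(u\mid\D^{(n)}).
\end{equation*}

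\textbf{Splitting the integral.} Fix a large constant $M$ and split the right-hand side over $A_{\epsilon_n,M}$ and its complement. On $A_{\epsilon_n,M}$ the integrand is at most $M^2\epsilon_n^2$. On the complement, $\mathcal{E}(u)\le C_0$ for a constant $C_0$ depending only on $A,V,f,g,\Omega,\lambda$ and the clip bounds, by the uniform $C^2$ bound. Hence
\begin{equation*}
\mathcal{E}(\bar u_n)\le M^2\epsilon_n^2 + C_0\,\Pi(A_{\epsilon_n,M}^c\mid \D^{(n)}).
\end{equation*}

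\textbf{Main obstacle.} We must show $\Pi(A_{\epsilon_n,M}^c\mid\D^{(n)})=O(\epsilon_n^2)$ with high probability. Theorem \ref{upper_bound} as stated only gives $o_p(1)$ mass when $M_n\to\infty$. We therefore open up its proof, which follows the standard Ghosal--van der Vaart scheme with a prior mass lower bound, test construction and sieve. That argument gives, for fixed large $M$ and on an event of probability tending to one, an exponential bound $\Pi(A_{\epsilon_n,M}^c\mid\D^{(n)})\le e^{-cM^2 n\epsilon_n^2}$ plus the sieve-complement contribution. The latter is also $e^{-c'n\epsilon_n^2}$ because the prior tails on $W$, $S$ and $B$ are exponential. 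Both terms are far smaller than $\epsilon_n^2$, since $n\epsilon_n^2\gtrsim \log n$ and suitable constants make $e^{-cn\epsilon_n^2}\le n^{-1}\le\epsilon_n^2$.

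\textbf{Conclusion.} Combining these gives $\mathcal{E}(\bar u_n)\lesssim \epsilon_n^2$ with probability tending to one. If only the $o_p(1)$ statement were available, the same argument would instead yield $\mathcal{E}(\bar u_n)\le M_n^2\epsilon_n^2+o_p(1)$, which is too weak. This is why the exponential form of the contraction bound is the key step to extract from the proof of Theorem \ref{upper_bound}.
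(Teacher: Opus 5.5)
Your skeleton matches the paper's: Jensen for the convex loss $\mathcal{E}$, a split at a fixed multiple of $\epsilon_n$, and an exponentially small posterior tail extracted from the test and evidence bounds in the proof of Lemma~\ref{contraction_empirical}. The gap is the step ``on the complement, $\mathcal{E}(u)\le C_0$ by the uniform $C^2$ bound'', which is false. The cutoff only controls function values. For $u=\clip\circ f_\theta$ one has $\nabla u=\clip'(f_\theta)\nabla f_\theta$ and $\nabla^2 u=\clip''(f_\theta)\nabla f_\theta\nabla f_\theta^{\top}+\clip'(f_\theta)\nabla^2 f_\theta$, and these coincide with $\nabla f_\theta$, $\nabla^2 f_\theta$ wherever $|f_\theta|<K+1$. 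Since the prior charges arbitrarily large $W$ and $B$, $\sup_{u\in\F}\|u\|_{C^2(\bar\Omega)}=\infty$. Already a network computing $\sigma_3(B(x_1-a))$ has second derivative of order $B^2$ on the slab of width $\asymp 1/B$ where it stays below $K+1$, so $\|-\div(A\nabla u)+Vu-f\|_{L^2(\Omega)}^2\gtrsim B^3$. Only the boundary term $\lambda\|u-g\|_{L^2(\partial\Omega)}^2$ is uniformly bounded, and for it your argument works verbatim; the interior residual is not. You also used the same false bound to make $\bar u_n$ a Bochner integral. That part is easily repaired, since the likelihood is bounded above and the prior has finite moments of $\|u\|_{H^2}$.

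The missing idea is to control the tail integral through a posterior \emph{moment} rather than a supremum, which is what the paper does. Write $\int_{A^c_{\epsilon_n,M}}\mathcal{E}\,d\Pi(\cdot\mid\D^{(n)})\le\bigl(\int\mathcal{E}^2\,d\Pi(\cdot\mid\D^{(n)})\bigr)^{1/2}\,\Pi(A^c_{\epsilon_n,M}\mid\D^{(n)})^{1/2}$; the paper does the same for each of the two norms after applying Jensen to the norms. Let $E$ be the event where the evidence satisfies $\int\frac{p_u^{(n)}q_u^{(n)}}{p_{u^*}^{(n)}q_{u^*}^{(n)}}\,d\Pi(u)\ge e^{-(\xi+1+C_1)n\epsilon_n^2}$. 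On $E$, Fubini and Markov give
\begin{equation*}
P_{u^*}^{(n)}Q_{u^*}^{(n)}\Bigl(\mathbf{1}_E\int\mathcal{E}(u)^2\,d\Pi(u\mid\D^{(n)})>e^{bn\epsilon_n^2}\epsilon_n^4\Bigr)\le e^{(\xi+1+C_1-b)n\epsilon_n^2}\,\epsilon_n^{-4}\int\mathcal{E}(u)^2\,d\Pi(u).
\end{equation*}
This needs only a finite \emph{prior} moment. That holds because, by Lemma~\ref{lipschitzness} with $\theta_2=0$ and the chain rule, $\|\clip\circ f_\theta\|_{C^2}$ grows polynomially in $(W,B)$, while $W$ and $B$ have light prior tails.

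The posterior second moment may therefore be as large as $e^{(\xi+1+C_1)n\epsilon_n^2}$. Your exponential tail bound $\Pi(A^c_{\epsilon_n,M}\mid\D^{(n)})\le e^{-bn\epsilon_n^2}$ is needed precisely to beat that factor. This requires $b$ strictly larger than $\xi+1+C_1$; the paper's choice $b=\xi+1+C_1$ is borderline and should be increased. After that, $M$ and $J$ are taken large relative to $b$.

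Your remark that the sieve-complement part of the tail can be made exponentially small by enlarging $C_W,C_S,C_B$ is correct. It is in fact necessary here, since one needs $\Pi(\F\setminus\F_n)=o(e^{-(\xi+1+C_1+b)n\epsilon_n^2})$, and the paper's proof does not address this explicitly.
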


\begin{remark} \label{rem:boundary}
    Thanks to the stability estimate of the PINN-loss shown in  \citep[Theorem 8]{zeinhofer2022notes}, the contraction in PINN-loss implies the contraction in $H^{1/2}(\Omega)$-norm, i.e.
    \begin{equation*}
         \Pi(u \in \F: \|u-u^*\|_{H^{1/2}(\Omega)} > M_n \epsilon_n | \D^{(n)}) \to 0,
     \end{equation*}
     in $P_{u^*}^{(n)} Q_{u^*}^{(n)}$-probability as $n \to \infty$ for any $M_n \to \infty$, and consequently we have the convergence of the posterior mean under $H^{1/2}$-norm $\|\bar{u}_n\|_{H^{1/2}(\Omega)} \lesssim \epsilon_n$ as $n\rightarrow \infty$.  Unfortunately,  the $L^2$-penalty on the boundary is too weak to ensure that the convergence in PINN-loss implies convergence in $H^{s}(\Omega)$ for any $s > {1/2}$. Our result should be contrasted with the minimax rate for PINNs under $H^2$-norm in \cite{lu2021machine} where they consider the homogeneous boundary condition and assume DNNs vanish on the boundary. In their homogeneous setting, $H^2$-norm is equivalent to the PINN-loss. In contrast, 
     to obtain a convergence  rate in $H^s(\Omega)$-norm with $s\geq 1$ in our non-homogeneous setting, it is necessary to use a stronger  Sobolev norm $H^{s-1/2}$ on the boundary instead of the $L^2$-norm in the PINN-loss. However, approximating  $H^{s-1/2} (\partial \Omega)$-norms on the boundary by discrete norms  using only function values 
at the data sites is more involved. It is an interesting direction to develop feasible approaches for approximating the Sobolev norms boundary and study the resulting posterior contraction rate in possibly a stronger norm. 

\end{remark}

\begin{remark}
    For convenience, we make the assumption in section \ref{pde_assumption} that $A, V \in C^\infty(\bar{\Omega})$ to ensure the regularity of the PDE solution. This assumption, however, can be relaxed. In particular, it suffices to assume that $A \in C^{\beta^*-1}(\bar{\Omega})$ and $V \in C^{\beta^*-2}(\bar{\Omega})$, under which there exists a unique strong solution $u^* \in C^\beta(\bar{\Omega})$ to $\eqref{elliptic_pde}$ with $f \in C^{\beta-2}(\bar{\Omega})$ and $g \in C^\beta(\partial{\Omega})$; see \cite{grisvard2011elliptic}.
\end{remark}

\subsection{Minimax lower bound}
We also study the minimax lower bound of PINN-loss given unequal sizes of noisy measurements taken both within the domain and from the boundary. 
\begin{theorem} \label{lower_bound}
    Assume $u^* \in C^{\beta}(\bar{\Omega})$ is a solution to $\eqref{elliptic_pde}$ with  $\beta > 2$, and $\|u^*\|_{C^{\beta}(\bar{\Omega})} \leq K$. For $n_1 \in \mathbb{N}$, let $\{X_i\}_{i=1}^{n_1}$ be an i.i.d. sequence of random variables distributed according to the uniform distribution over $\Omega$, and $f_i = f(X_i) + \epsilon_i$ with $\epsilon_i \overset{\textnormal{i.i.d.}}{\sim} \N(0, 1)$ being independent from $X_i$. For $n_2 \in \mathbb{N}$, let $\{Y_j\}_{j=1}^{n_2}$ be an i.i.d. sequence of uniformly distributed random variables over $\partial \Omega$, $Y_j = g(Y_j) + \eta_j$ with $\eta_j \overset{\textnormal{i.i.d.}}{\sim} \N(0, 1)$ being independent from $Y_j$. Denote $\D = \{X_i, f_i\}_{i=1}^{n_1} \cup \{Y_j, g_j\}_{j=1}^{n_2}$. Then we have
    \begin{align*}
        \inf_\psi \sup_{u^*} \E \mathcal{E}(\psi(\D)) \gtrsim n_1^{\frac{2(\beta-2)}{d + 2(\beta-2)}} + n_2^{\frac{2\beta}{d-1 + 2\beta}},
    \end{align*}
    where the supremum is taken over all $u^* \in C^\beta(\bar{\Omega})$ which is a solution to \eqref{elliptic_pde} with $f \in C^{\beta-2}(\bar{\Omega})$ $g \in C^\beta(\partial{\Omega})$ such that $\|u^*\|_{C^\beta(\bar{\Omega})} \leq K$, and the infimum is taken over all estimators $\psi: (\R^{d})^{\otimes n_1} \times \R^{\otimes n_1} \times (\R^d)^{\otimes n_2} \times \R^{\otimes n_2}\to C^\beta(\bar{\Omega})$.
\end{theorem}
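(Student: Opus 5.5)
Throughout I read the exponents as negative, i.e. the claim is $\inf_\psi\sup_{u^*}\E\,\mathcal{E}(\psi(\D))\gtrsim n_1^{-\frac{2(\beta-2)}{d+2(\beta-2)}}+n_2^{-\frac{2\beta}{d-1+2\beta}}$. The plan has three steps.

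\textbf{Step 1: splitting into two subproblems.} Because $\mathcal{E}(u)$ is a sum of two nonnegative terms, it dominates each of them:
\[
\mathcal{E}(u)\ge \|-\div(A\nabla u)+Vu-f\|_{L^2(\Omega)}^2
\quad\text{and}\quad
\mathcal{E}(u)\ge \lambda\|u-g\|_{L^2(\partial\Omega)}^2 .
\]
Also, $a+b\le 2\max(a,b)$. So it suffices to prove each of the two rates separately, by restricting the supremum to two different finite subfamilies of admissible solutions.

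\begin{itemize}
\item In the first subfamily only $f$ varies and $g$ is held fixed. The boundary sample $\{Y_j,g_j\}$ then has the same law under every hypothesis and carries no information.
\item In the second subfamily only $g$ varies and $f$ is held fixed. Now the interior sample is uninformative.
\end{itemize}

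In each case I apply Assouad's lemma over a hypercube $\{0,1\}^m$ of hypotheses.

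\textbf{Step 2: interior rate.} I fix smooth $f_0,g_0$ whose solution $u_0$ satisfies $\|u_0\|_{C^\beta}\le K/2$.

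\begin{itemize}
\item \emph{Hypotheses.} Take a smooth bump $\varphi$ supported in the unit ball. Choose $m\asymp h^{-d}$ disjoint balls of radius $h$ compactly inside $\Omega$, with centers $x_k$. For $w\in\{0,1\}^m$ set
\[
f_w=f_0+c\,h^{\beta-2}\sum_k w_k\varphi\big((x-x_k)/h\big),
\]
keep $g_0$ on the boundary, and let $u_w$ be the corresponding solution.
\item \emph{Admissibility.} Disjointness of the supports gives $\|f_w-f_0\|_{C^{\beta-2}}\lesssim c$ uniformly in $h$. The Schauder estimate for the operator in \eqref{elliptic_pde} with zero boundary data gives $\|u_w-u_0\|_{C^\beta}\lesssim c$. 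Choosing $c$ small then gives $\|u_w\|_{C^\beta}\le K$. If $\beta$ is an integer, I apply Schauder at a slightly larger non-integer exponent instead.
\item \emph{Loss separation.} For any $\psi$, let $r=-\div(A\nabla\psi)+V\psi$. Then
\[
\|r-f_w\|_{L^2}^2\ge\sum_k\|r-f_w\|_{L^2(B_k)}^2 .
\]
If $w,w'$ differ only in coordinate $k$, the triangle inequality gives $\|r-f_w\|^2_{L^2(B_k)}+\|r-f_{w'}\|^2_{L^2(B_k)}\gtrsim h^{2(\beta-2)+d}$. This is exactly Assouad's separation condition per coordinate.
\item \emph{Information bound.} With Gaussian noise and uniform design, the KL divergence between neighbouring hypotheses is $\tfrac{n_1}{2|\Omega|}\|f_w-f_{w'}\|_{L^2}^2\asymp n_1h^{2(\beta-2)+d}$. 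Choosing $h=n_1^{-1/(d+2(\beta-2))}$ makes this $O(1)$.
\item \emph{Conclusion.} Assouad then yields risk $\gtrsim m\,h^{2(\beta-2)+d}=h^{2(\beta-2)}$, which is the claimed interior rate.
\end{itemize}

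\textbf{Step 3: boundary rate.} Now fix $f_0$ and perturb $g$.

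\begin{itemize}
\item \emph{Hypotheses.} Using finitely many local charts of the smooth boundary, place $m\asymp h^{-(d-1)}$ disjoint surface patches on $\partial\Omega$. Set
\[
g_w=g_0+c\,h^\beta\sum_k w_k\varphi_k ,
\]
where $\varphi_k$ is a bump of width $h$ transplanted to the $k$-th patch, so that $\|g_w-g_0\|_{C^\beta(\partial\Omega)}\lesssim c$.
\item \emph{Admissibility.} Boundary Schauder estimates give $\|u_w-u_0\|_{C^\beta(\bar\Omega)}\lesssim\|g_w-g_0\|_{C^\beta(\partial\Omega)}\lesssim c$, so again $\|u_w\|_{C^\beta}\le K$ for small $c$.
\item \emph{Separation, information, conclusion.} The loss is bounded below by $\lambda\|\psi-g_w\|_{L^2(\partial\Omega)}^2$. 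The per-coordinate separation is $\asymp h^{2\beta+d-1}$, and the KL between neighbours is $\asymp n_2h^{2\beta+d-1}$. Taking $h=n_2^{-1/(d-1+2\beta)}$ gives risk $\gtrsim h^{2\beta}$, the claimed boundary rate.
\end{itemize}

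\textbf{Main obstacle.} The delicate point is admissibility: verifying that every perturbed pair $(f_w,g_w)$ produces a solution in the $C^\beta$-ball of radius $K$ with constants uniform in $h$. This rests on the global Schauder theory up to the boundary, with non-integer exponents handled as in \cite{grisvard2011elliptic}, together with the disjoint-support computation of Hölder norms of sums of rescaled bumps. The statistical parts are routine Assouad/KL computations once this is in place.
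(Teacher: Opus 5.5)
Your argument is correct for non-integer $\beta$ and follows the paper's overall plan. You bound $\mathcal{E}$ below by each of its two nonnegative parts, make one half of the data uninformative in each subproblem, and for the boundary rate perturb the Dirichlet data of a solution with fixed source. It differs from the paper in two places.

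\textbf{Assouad instead of Fano.} You use Assouad's lemma on $\{0,1\}^m$, whereas the paper uses a Varshamov--Gilbert packing with local Fano. Because the loss is an $L^2$ norm and your bumps have disjoint supports, the coordinatewise decomposition Assouad needs is immediate. You also skip the pairwise-separation bookkeeping, and the rates are the same.

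\textbf{Perturbing $f$ instead of $u$ in the interior.} You perturb $f$ and obtain $u_w$ by solving the Dirichlet problem. The paper perturbs $u$ itself: $u_k=\sum_j\tau^{(k)}_j w m^{-\beta}g(m(x-x^{(j)}))$ is compactly supported in $\Omega$, so the boundary datum is automatically $0$ and $f_k=-\div(A\nabla u_k)+Vu_k$ is simply read off. Your choice makes the separation and the KL term literally the same quantity $\|f_w-f_{w'}\|_{L^2(\Omega)}^2$, so you never need a lower bound on $\|-\div(A\nabla\phi)+V\phi\|_{L^2}$ for a bump $\phi$. The paper's choice makes admissibility, $\|u_k\|_{C^\beta(\bar\Omega)}\le K$, a pure scaling computation with no elliptic regularity at all.

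This second difference matters when $\beta$ is an integer, and your patch for that case does not work. Applying Schauder at a non-integer $\beta'>\beta$ would require $\|f_w-f_0\|_{C^{\beta'-2}}$ to stay bounded. But bumps of width $h$ and amplitude $h^{\beta-2}$ have $C^{\beta'-2}$ norm of order $h^{-(\beta'-\beta)}$, which blows up as $h\to 0$. Schauder also genuinely fails at integer exponents: with the paper's convention $C^\beta=C^{\beta-1,1}$, a $C^{\beta-3,1}$ source need not give a $C^{\beta-1,1}$ solution.

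\begin{itemize}
\item \textbf{Interior rate.} The clean fix is to adopt the paper's construction and perturb $u$ directly.
\item \textbf{Boundary rate.} Both you and the paper rely on $\|u_w-u_0\|_{C^\beta(\bar\Omega)}\lesssim\|g_w-g_0\|_{C^\beta(\partial\Omega)}$ uniformly in $h$; the paper simply asserts that small $w$ ensures $\|u_k\|_{C^\beta(\bar\Omega)}\le K$. This is standard global Schauder theory for non-integer $\beta$. For integer $\beta$ it needs an additional argument that uses the structure of the bump sums, which neither write-up supplies. For instance, their top-order derivatives are sums of mean-zero bumps on an $h$-lattice, which keeps the singular-integral terms bounded.
\end{itemize}
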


\begin{remark}
    In view of Theorem \ref{upper_bound} and  minimax lower bound of Theorem \ref{lower_bound} with $n_1=n_2$, Bayesian PINN with an equal number of boundary and interior measurements attains a posterior contraction rate that is near–minimax optimal. Under our assumption that the true solution is Hölder continuous, \ref{lower_bound} shows that any estimators, including PINNs, may suffer from curse of dimensionality, which is consistent with the literature \cite{lu2021machine, jiao2021rate}. On the other hand, if the solution is assumed to have better regularity, e.g., if the solution is Barron assumed in \cite{lu2021priori}, then the generalization error of deep Ritz method does not incur a curse of dimensionality; their analysis can be adapted to PINNs under the same Barron assumption. 
\end{remark}

\section{Proof Sketch}

In this section, we outline the proof idea of the results in Section \ref{main_results} and defer the complete proofs to the appendix.

\subsection{Proof sketch of Theorem \ref{upper_bound}} \label{proof sketch_upper_bound}

The proof of Theorem \ref{upper_bound} can be outlined as follows.   Utilizing the theoretical framework developed by \cite{ghosal2007convergence}, we first derive a posterior contraction rate with respect to the empirical PINN-loss $\mathcal{E}_n(\cdot; \D^{(n)})$; see Lemma \ref{contraction_empirical}. Then we transfer this contraction to the population PINN-loss by controlling the generalization gap (see Proposition \ref{generalization}).  


\paragraph{Posterior contraction with respect to the empirical distances}

We first adapt the posterior contraction framework of \cite{ghosal2007convergence} to our setting and establish contraction of the posterior in the empirical PINN-loss for the PDE problem.


\begin{lemma}  \label{contraction_empirical}
     Assume $u^*$ satisfies the same assumption as in Theorem \ref{upper_bound}. Moreover, assume that there exists a subset $\F_n \subset \F$ and a constant $C > 0$ such that
     \begin{align} \label{entropy_condition}
     \begin{split}
          \log \N(\frac{\epsilon_n}{4}, \F_n, \|-\div (A \nabla\cdot) + V \cdot \|_{n, 1}) \leq C n \epsilon_n^2, \\
          \log \N(\frac{\epsilon_n}{4}, \F_n, \|\cdot\|_{n, 2}) \leq C n \epsilon_n^2,
    \end{split}
     \end{align}
     and there exist a prior probability measure $\Pi$ on $\F$, and some constant $\xi > 0$, such that
     \begin{align} \label{prior_condition}
         \Pi(u \in \F: \mathcal{E}_n (u; \D^{(n)} ) \leq \epsilon_n^2) \geq \exp(-\xi n \epsilon_n^2), 
     \end{align}
     and
     \begin{align} \label{sieve_condition}
         \Pi(\F \backslash \F_n) = o(\exp(-(\xi+2)n \epsilon_n^2)). 
     \end{align}
     Then 
     \begin{align*}
         \Pi(u \in \F: \mathcal{E}_n(u; \D^{(n)} ) > M_n^2 \epsilon_n^2 | \D^{(n)}) \to 0,
     \end{align*}
     in $P_{u^*}^{(n)} Q_{u^*}^{(n)}$-probability as $n \to \infty$ for any $M_n \to \infty$. 
\end{lemma}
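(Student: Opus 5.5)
This is the Ghosal–van der Vaart contraction theorem for independent, non-identically distributed observations, applied conditionally on the design points $\{X_i\}$ and $\{Y_j\}$. Conditionally on the design, the data are independent Gaussians: $f_i \sim \N(f_u(X_i),1)$ under $u$, with $f_u := -\div(A\nabla u)+Vu$, and $g_j\sim\N(u(Y_j),1)$. The squared Hellinger-type and Kullback–Leibler quantities then reduce to the empirical distances \eqref{empirical_dist1}–\eqref{empirical_dist2}, up to the constants $|\Omega|$, $|\partial\Omega|$, $\lambda$. In particular, $\KL(P_{u^*}^{(n)}Q_{u^*}^{(n)}\,\|\,P_u^{(n)}Q_u^{(n)})$ and the variance of the log-likelihood ratio are both of order $n$ times $\|f_u-f\|_{n,1}^2+\|u-u^*\|_{n,2}^2$. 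I read $\mathcal{E}_n(u;\D^{(n)})$ in \eqref{prior_condition} and in the conclusion as the sum of these two squared empirical distances (noise-free residuals), which is presumably the intended meaning. Otherwise the noise contributes an $O(1)$ term that never goes below $\epsilon_n^2$. I will write $d_n(u,u^*)^2$ for this quantity.

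The plan follows the three standard steps.

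First, the denominator. Using \eqref{prior_condition} together with the Gaussian KL computation, the standard evidence lower bound (Lemma 8.10 of \cite{ghosal2017fundamentals}) applies, conditionally on the design. It gives, with conditional probability tending to one,
\[
\int p_u^{(n)}q_u^{(n)}/(p_{u^*}^{(n)}q_{u^*}^{(n)})\,d\Pi(u)\ \ge\ \exp(-(\xi+2)n\epsilon_n^2).
\]
The proof restricts the prior to the KL neighbourhood, applies Jensen, and uses Chebyshev on the centered Gaussian linear term, whose variance is $\lesssim n\epsilon_n^2$.

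Second, the sieve complement. By Fubini, the expected numerator mass of $\F\setminus\F_n$ equals $\Pi(\F\setminus\F_n)$. By \eqref{sieve_condition} this is $o(e^{-(\xi+2)n\epsilon_n^2})$, so after dividing by the denominator bound the posterior of $\F\setminus\F_n$ vanishes in probability.

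Third, tests, which I expect to be the main obstacle. Take $u\in\F_n$ with $d_n(u,u^*)>M_n\epsilon_n$. For a single alternative $u_1$ I use the Gaussian likelihood ratio test, i.e. the Neyman–Pearson test between $u^*$ and the midpoint. Its type I error and its type II error uniformly over the ball $\{u: d_n(u,u_1)\le d_n(u_1,u^*)/4\}$ (made explicit by the Gaussian shift structure) are at most $\exp(-c\,n\,d_n(u_1,u^*)^2)$. The two components are handled jointly because the joint empirical distance is a sum. I peel into shells $jM_n\epsilon_n<d_n\le(j+1)M_n\epsilon_n$ and cover each shell. For the covering numbers in \eqref{entropy_condition}, radius $\epsilon_n/4$ in each component gives joint radius $\lesssim\epsilon_n$, which is at most $j M_n\epsilon_n/4$ for large $n$. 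This yields at most $e^{2Cn\epsilon_n^2}$ tests per shell. Taking the maximum over tests gives
\[
\E\phi_n\le\sum_j e^{2Cn\epsilon_n^2-cj^2M_n^2n\epsilon_n^2}\to0,
\]
and type II errors are at most $e^{-cj^2M_n^2n\epsilon_n^2}$. The delicate point is keeping the scaling constants ($|\Omega|$, $\lambda|\partial\Omega|$) and the conditioning on random design points consistent. This is fine because the covering numbers in \eqref{entropy_condition} are themselves design-dependent and the bounds hold for every design.

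Finally, I combine the pieces in the usual way. Bound the posterior by $\phi_n$ plus the sieve term plus the test-weighted numerator, divide by the denominator bound, and use $M_n\to\infty$ to absorb the factor $e^{(\xi+2)n\epsilon_n^2}$. Each term tends to zero in conditional probability, uniformly in the design. Dominated convergence then removes the conditioning and gives convergence in $P_{u^*}^{(n)}Q_{u^*}^{(n)}$-probability.
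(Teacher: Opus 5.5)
Your proposal is correct and follows essentially the same route as the paper. Both use the Ghosal--van der Vaart argument for independent, non-identically distributed Gaussian observations: tests come from the Gaussian likelihood-ratio test combined with the covering bound \eqref{entropy_condition}, the evidence lower bound comes from \eqref{prior_condition}, and Fubini together with \eqref{sieve_condition} handles $\F\setminus\F_n$; your reading of $\mathcal{E}_n$ as the noise-free empirical distance is exactly how the paper's proof uses it. The differences are cosmetic: you test the interior and boundary components jointly at radius $M_n\epsilon_n$, whereas the paper tests them separately at radius $JM\epsilon_n$ for fixed large constants $J, M$, and you make explicit the conditioning on the design points that the paper leaves implicit.
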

 The entropy condition \eqref{entropy_condition} controls the size $\F_n$ of the sieve while \eqref{sieve_condition} guarantees that it should also receive the majority of the prior mass. The prior condition \eqref{prior_condition} ensures a non-vanishing mass in a shrinking neighborhood around the ground truth. The detailed proof of Lemma \ref{contraction_empirical} can be found in Appendix \ref{proof_contraction_empirical}. 

\paragraph{Construction of the sieve}
The construction of the sieve $\F_n$ relies on the following approximation result.
\begin{proposition}\label{approximation}
Let $2 < \beta$ and $l \in \mathbb{N}$. Then for any $f \in C^{\beta}(\bar{\Omega})$, there exists  a $\sigma_3$-DNN $\phi$ with depth $L = O(\log \beta + \log d )$, width $W = O(d \beta \cdot l^d)$, sparsity $S = O(d \beta \cdot l^d)$, and size of parameters $B = O( l^d)$, such that 
 \begin{align*} 
     \|f - \phi\|_{C^{2}(\bar{\Omega})} \lesssim l^{-(\beta - 2)} \|u\|_{C^{\beta}(\bar{\Omega})}. 
 \end{align*}
\end{proposition}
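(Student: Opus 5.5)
We prove Proposition \ref{approximation} by the standard ``localized Taylor polynomial'' construction: a smooth partition of unity built from cardinal B-splines, combined with the fact that $\sigma_3$-networks represent B-splines and polynomials \emph{exactly}. Since $\sigma_3$ is piecewise cubic, no approximation error enters from emulating multiplication, unlike the ReLU case.

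We first reduce to a cube. Because $\partial\Omega$ is smooth, a bounded linear extension operator $E: C^\beta(\bar\Omega)\to C^\beta(\R^d)$ exists with $\|Ef\|_{C^\beta}\lesssim \|f\|_{C^\beta(\bar\Omega)}$. After rescaling we may assume $\bar\Omega\subset(0,1)^d$ and approximate $Ef$ on $[0,1]^d$ in $C^2$.

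Next we build the partition of unity. Let $N$ be the univariate cardinal cubic B-spline, which lies in $C^2$ and has support $[0,4]$. It is a fixed linear combination of five shifted $\sigma_3$ units: $N(x)=\frac{1}{6}\sum_{k=0}^4(-1)^k\binom4k\sigma_3(x-k)$. Define $\psi_j(x)=N(lx-j+2)$ for $j$ in a grid of size $O(l)$; these form a partition of unity on $[0,1]$. Set $\Psi_{\mathbf j}(x)=\prod_{i=1}^d\psi_{j_i}(x_i)$. Then $\sum_{\mathbf j}\Psi_{\mathbf j}=1$, each $\Psi_{\mathbf j}$ is supported in a cube of side $4/l$, and $\|\partial^{\bm\alpha}\Psi_{\mathbf j}\|_\infty\lesssim l^{|\bm\alpha|}$. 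At most $4^d$ of the $\Psi_{\mathbf j}$ overlap at any point. For each $\mathbf j$, let $P_{\mathbf j}$ be the Taylor polynomial of $Ef$ of degree $q=\lceil\beta\rceil-1$ at the center $x_{\mathbf j}$, and set $\phi=\sum_{\mathbf j}\Psi_{\mathbf j}P_{\mathbf j}$.

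We then estimate the error in $C^2$. Write $Ef-\phi=\sum_{\mathbf j}\Psi_{\mathbf j}(Ef-P_{\mathbf j})$ and apply Leibniz: for $|\bm\alpha|\le2$,
\[
|\partial^{\bm\alpha}(\Psi_{\mathbf j}(Ef-P_{\mathbf j}))|\lesssim\sum_{\bm\gamma\le\bm\alpha}l^{|\bm\alpha-\bm\gamma|}\,|\partial^{\bm\gamma}(Ef-P_{\mathbf j})|.
\]
On the support, Taylor's theorem with Hölder remainder gives $|\partial^{\bm\gamma}(Ef-P_{\mathbf j})|\lesssim l^{-(\beta-|\bm\gamma|)}\|f\|_{C^\beta}$. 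Each term is therefore $\lesssim l^{-(\beta-|\bm\alpha|)}\le l^{-(\beta-2)}$ (recall $\beta>2$), and bounded overlap sums these to a $C^2$ error of order $l^{-(\beta-2)}\|f\|_{C^\beta(\bar\Omega)}$.

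Finally we realize $\phi$ as a $\sigma_3$-network, and this step is the main obstacle: exact products must be realized with the stated depth, width and weight bounds. Three identities do the work.
\begin{itemize}
\item Squares are exact: $x^2$ is an exact linear combination of a constant number of $\sigma_3$ units, e.g. via $\sigma_3(x+1)-\sigma_3(x)-\sigma_3(-x+1)+\sigma_3(-x)$ and similar combinations, valid on bounded ranges.
\item Products then follow from $xy=\frac14((x+y)^2-(x-y)^2)$.
\item The identity map on bounded inputs is also an exact combination of $\sigma_3$ units, which lets us carry values forward between layers.
\end{itemize}
Multiplying the $d$ univariate factors $\psi_{j_i}$ together with the polynomial $P_{\mathbf j}$ (degree $q\lesssim\beta$) in a binary tree needs depth $O(\log d+\log\beta)$. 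Each local piece needs width $O(d\beta)$, and the $O(l^d)$ pieces run in parallel, giving width and sparsity $O(d\beta\,l^d)$. The weight bound $B=O(l^d)$ comes from two sources: the inner scaling $lx$ contributes $l$, and the Taylor coefficients times the normalization constants from the squaring identities contribute the rest. If these are tracked more carefully, the naive $l^d$ is generous. One must check that all intermediate quantities stay in bounded ranges so the exact-identity gadgets apply, which holds because $\Psi_{\mathbf j}\in[0,1]$ and $\|P_{\mathbf j}\|_\infty\lesssim K$ on the support. The outer clip does not affect the bound, since it is the identity on the relevant range when the cutoff threshold exceeds $\sup|f|$ by a margin.
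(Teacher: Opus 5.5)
Your error analysis is correct, and it follows a genuinely different route from the paper. The paper uses no local Taylor polynomials. It extends and rescales $f$, then takes tensor-product B-splines of order $k\ge\beta$, with $3\mid k-1$ so that $(\max\{x-c,0\})^{k-1}$ is an exact product of $\sigma_3$ units. It reads off the $C^2$ error $l^{-(\beta-2)}$ from Schumaker's quasi-interpolant estimate (Lemma \ref{bspline_approximation}) and realizes $Q_{k,l}\tilde f=\sum_{\bm i}\lambda_{\bm i}(\tilde f)N^{(k)}_{l,\bm i}$ exactly. Your cubic-B-spline partition of unity, combined with Leibniz and the H\"older--Taylor remainder, gives the same $l^{-(\beta-2)}$ bound by elementary means.

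\textbf{The gap is in the network accounting.} Your claim that each local piece needs width and sparsity $O(d\beta)$ is false for your $\phi$. Each $P_{\mathbf j}$ is a polynomial of degree $q$ in $d$ variables, so it has $\binom{d+q}{d}$ free coefficients. The Taylor data on cells with disjoint supports can be prescribed independently. A parameter count (an exact $\sigma_3$ realization with $S$ nonzero weights has a parameter image of dimension at most $S$) then shows that realizing your $\phi$ for all $f$ needs on the order of $\binom{d+q}{d}$ nonzero weights per cell. For fixed $d\ge 2$, that grows like $\beta^d$, not $\beta$. What you actually prove is the proposition with width and sparsity $O(C(d,\beta)\,l^d)$.

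\textbf{The missing idea.} To get the stated linear dependence on $\beta$, you need the paper's mechanism: let the order-$k$ spline space carry the polynomial reproduction itself. Then each of the $(l+k-1)^d$ basis functions has a single scalar coefficient $\lambda_{\bm i}(f)$. Each is a product of $d$ univariate splines, each realizable exactly with $O(k)$ units, giving $O(dk)$ per basis function. (The paper's own step $(l+k)^d=O(l^d)$ absorbs a factor $(1+k/l)^d$, so its count is also explicit only up to such constants.)

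\textbf{Scope of the gap.} If constants depending on $d$ and $\beta$ are acceptable, your proof goes through. That is all the downstream use needs, since $C_W^*$ and $C_S^*$ in the verification of \eqref{prior_condition} may depend on $d$ and $\beta$. Your construction even gives a better weight bound, $B=O(l)$ up to constants in $d,\beta,K$.

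\textbf{Two smaller repairs.}
\begin{enumerate}
\item Your sample square gadget $\sigma_3(x+1)-\sigma_3(x)-\sigma_3(1-x)+\sigma_3(-x)$ is odd and equals $x^3+6x$ on $[-1,1]$, so it does not produce $x^2$. Use Lemma \ref{power_relu3} after shifting to a nonnegative range.
\item The range check must hold on all of $[0,1]^d$, not only on $\supp\Psi_{\mathbf j}$, because the product gadget must return exactly $0$ wherever $\Psi_{\mathbf j}=0$. This works if you write $P_{\mathbf j}$ in the unscaled variable $x-x_{\mathbf j}$, since then $|P_{\mathbf j}|\lesssim_{d,q}K$ on the whole cube. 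It fails if you use the local variable $l(x-x_{\mathbf j})$.
\end{enumerate}
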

Proposition \ref{approximation} is a restatement of \citep[Theorem 4.5]{lu2021machine}. However, upon a careful inspection of the proof of \citep[Theorem 4.5]{lu2021machine}, we notice that their proof is only presented for the smoothness level $\beta\leq 4$. We will provide a proof of the result for any $\beta> 2$. Moreover, their approximation error bound is quantified in terms of $W^{\beta,2}$-norm, while we control the error in a $C^{\beta}$-norm, which turns out to be crucial in proving posterior contraction under the empirical loss $\mathcal{E}_n(\cdot; \D^{(n)})$.
Thanks to the standard interpolation theory, we do not require the smoothness level $\beta$ be integer and we characterize explicitly  the dependence of neural network parameters on $\beta$ and $d$, which  will also be critical   in our design of the sieve neural network $\F_n$ and the overall set of networks $\F$. The proof of Proposition \ref{approximation} can be found in Appendix \ref{proof_approximation}. 


To construct the sieve $\F_n$, the idea is to ensure the existence of a function in $\F_n$ that approximates the ground truth within error $\epsilon_n$, which is a key ingredient in verifying the condition \eqref{prior_condition}, while controlling the size of $\F_n$ by enforcing \eqref{entropy_condition}. 
We define the sieve $\F_n$ as follows: 
\begin{equation} \label{sieve}
    \F_n = \cup_{W = 1}^{W_n} \cup_{S = 1}^{S_n} \cup_{B \leq B_n} \F(L, W, S, B).
\end{equation}
Specifically, we take $L = O(\log d + \log \beta^\ast)$ independent of $n$ with $\beta^\ast$ representing the maximum of smoothness level $\beta$ as in the assumption of Theorem \ref{upper_bound}. It is worth noting that $L$ depends only on $\Omega$ and $\beta^\ast$, rather on the approximation accuracy or the function to be approximated.  Moreover, let $W_n = C_W n^{\frac{d}{d + 2(\beta-2)}}$, $S_n = C_S n^{\frac{d}{d + 2(\beta-2)}}$, $B_n = C_B n^{\frac{d}{d + 2(\beta-2)}} \log n $ with $C_W, C_S, C_B$ being constants independent of $n$.  

Correspondingly, we construct the entire DNN-class as 
\begin{equation*}
    \F = \cup_{W = 1}^{\infty} \cup_{S = 1}^{\infty} \cup_{B > 0} \F(L, W, S, B),
\end{equation*} 
where $L$ is as specified above. 

Below we sketch the proof to verify that the sieve $\F_n$ as constructed above and the prior $\Pi$ as stated in Section \ref{prior_construction} satisfies the three conditions in Lemma \ref{contraction_empirical}. The detailed proofs can be found in Appendix \ref{proof_verify_conditions}. 

\paragraph{Verifying \eqref{entropy_condition}}
First, thanks to \cite{lu2021machine}, for any $\theta_1, \theta_2 \in \Theta(L, W, S, B)$, we have 
\begin{align*}
    & \sup_{x \in \Omega} |\clip \circ f_{\theta_1}(x) -  \clip \circ f_{\theta_2}(x)| \\  & \vee  \sup_{x \in \Omega} \|\nabla (\clip \circ f_{\theta_1})(x) - \nabla (\clip \circ f_{\theta_2})(x)\|_\infty \\ & \vee \sup_{x \in \Omega} \|\nabla^2 (\clip \circ f_{\theta_1}(x)) - \nabla^2 (\clip \circ f_{\theta_2})(x)\|_\infty \\  =&  O(W^{\frac{3^{L-1}-1}{2}} (B \vee d)^{\frac{5 \cdot 3^{L-1}-1}{2}}) \cdot \|\theta_1 - \theta_2\|_\infty. 
\end{align*}
Then using the boundedness assumption of the coefficients, it is not hard to show that 
\begin{equation} \label{sketch_lipschitzness}
\begin{aligned}
     & \sup_{x \in \Omega} |-\div (A \nabla (\clip \circ f_{\theta_1}- \clip \circ f_{\theta_2}))(x) \\ &  + V(x) (\clip \circ f_{\theta_1} - \clip \circ f_{\theta_2})(x)| \\ & =    O(W^{\frac{3^{L-1}-1}{2}} (B \vee d)^{\frac{5 \cdot 3^{L-1}-1}{2}}) \cdot \|\theta_1 - \theta_2\|_\infty \\
     & \eqqcolon  C \cdot \|\theta_1 -\theta_2\|,
\end{aligned}
\end{equation}
where $C$ depends on $L, W, B$. Hence  the covering number of DNN-space can be bounded by the covering number of the corresponding parameter space:
\begin{align*}
    & \log \mathcal{N}(\frac{\epsilon_n}{4}, \F_n, \|-\div(A \nabla \cdot) + V \cdot\|_{n, 1}) \\ \lesssim & \log \left(\frac{\epsilon_n}{4 C |\Omega|B_n}\right)^{-S_n}\\
     \lesssim & S_n \log(\frac{4 C |\Omega| B_n}{\epsilon_n})\\
     \lesssim & n \epsilon_n^2,
\end{align*}
where the last step follows from the construction of the sieve $\F_n$ where $W_n = C_W n^{\frac{d}{d + 2(\beta-2)}}$, $S_n = C_S n^{\frac{d}{d + 2(\beta-2)}}$, $B_n = C_B n^{\frac{d}{d + 2(\beta-2)}} \log n$. The second entropy condition follows by an analogous argument. 

\paragraph{Verifying \eqref{prior_condition}}
By the construction of $\F_n$ and Proposition \ref{approximation}, we can find a $u_n^*$ such that $\|u_n^* - u^*\|_{C^2(\bar{\Omega})} \lesssim \epsilon_n$, and $u_n^\ast \in \F(L, W_n^*, S_n^*, B_n^*) \subset \F_n$ with $W_n^* = C_W^* n^{\frac{d}{d + 2(\beta-2)}}, S_n^* = C_S^* n^{\frac{d}{d + 2(\beta-2)}}, B_n^* = C_B^* n^{\frac{d}{d + 2(\beta-2)}}$. Then by a triangle inequality, we can lower bound
$\Pi(u \in \F: \mathcal{E}_n(u;\D^{(n)}) \leq \epsilon_n^2)$ by the posterior mass of a subset of $\F_n$, each function in which shares the same architecture as $u_n^\ast$, denoted by $\gamma_n^\ast$, and is within $O(\epsilon_n)$-neighborhood of $u_n^\ast$. Moreover, because of \eqref{sketch_lipschitzness}, the mass of the subset can be further lower bounded by the a subset of functions whose parameters is within $O(\epsilon_n / C)$-neighborhood of $\theta_n^\ast$, where $\theta_n^\ast$ is the parameters of $u_n^\ast$. Specifically, we have
\begin{align*}
    & \Pi(u \in \F: \mathcal{E}_n(u;\D^{(n)}) \leq \epsilon_n^2) \\
    \geq & \Pi_\theta(W = W_n^*, \gamma = \gamma_n^*, 2B_{n}^* > B \geq B_{n}^*) \left(\frac{\epsilon_n}{2 C B_n^\ast}\right)^{S_n^\ast} \\
    \gtrsim &  \frac{1}{W_n^\ast !} \frac{1}{(2 L (W_n^\ast)^2)^{S_n^\ast}} \exp(-\lambda B_n^\ast) \left(\frac{\epsilon_n}{2 C B_n^\ast}\right)^{S_n^\ast}\\
    \geq & \exp(-\xi n \epsilon_n^2),
\end{align*}
where the constant $\xi$ depends on $C_W^\ast, C_S^\ast, C_B^\ast$.

\paragraph{Verifying \eqref{sieve_condition}}
Note that 
\begin{align*}
    \Pi(\F \backslash \F_n) \leq & \Pi_\theta(W > W_n) + \Pi_\theta(W \leq W_{n}, S > S_n) \\
    & + \Pi_\theta(B > B_n).
\end{align*}
It is not hard to  show that each term on the right hand side decays with a rate of at least $e^{-(\xi + 2) n\epsilon_n^2}$ provided that the constants $C_W, C_S, C_B$ are chosen to be sufficiently large. 

\paragraph{Bounding the generalization gap} The proposition below bounds the generalization gap between the empirical PINN-loss and the population counterpart.  

\begin{proposition} \label{generalization}
    Assume $u^*$ satisfies the same assumption as in Theorem \ref{upper_bound}. Take $\F_n$ be as the same in \eqref{sieve}. Then with probability at least $1 - 2\exp(-n\epsilon_n^2)$, we have 
    \begin{align*}
        \sup_{u \in \F_n} \left[\mathcal{E}(u) - \mathcal{E}_n(u; \D^{(n)})\right] \lesssim \epsilon_n^2.
    \end{align*}
\end{proposition}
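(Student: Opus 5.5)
The plan is to expand the empirical loss around the noiseless residuals and control each resulting term uniformly over the sieve $\F_n$. Write $r_u \coloneqq -\div(A\nabla u)+Vu-f$. Using $f_i=f(X_i)+\epsilon_i$ and $g_j=g(Y_j)+\eta_j$, we have
\begin{align*}
\mathcal{E}_n(u;\D^{(n)}) = {}& \|r_u\|_{n,1}^2 - \frac{2|\Omega|}{n}\sum_{i=1}^n \epsilon_i r_u(X_i) + \frac{|\Omega|}{n}\sum_{i=1}^n \epsilon_i^2 \\
&+ \lambda\|u-g\|_{n,2}^2 - \frac{2\lambda|\partial\Omega|}{n}\sum_{j=1}^n \eta_j (u-g)(Y_j) + \frac{\lambda|\partial\Omega|}{n}\sum_{j=1}^n \eta_j^2 .
\end{align*}
Therefore $\mathcal{E}(u)-\mathcal{E}_n(u)$ splits into three kinds of terms:
\begin{itemize}
\item the \emph{design} gaps $\|r_u\|_{L^2(\Omega)}^2-\|r_u\|_{n,1}^2$ and $\|u-g\|_{L^2(\partial\Omega)}^2-\|u-g\|_{n,2}^2$;
\item the \emph{cross} (Gaussian multiplier) terms;
\item minus the pure-noise terms $\frac{|\Omega|}{n}\sum_i\epsilon_i^2+\frac{\lambda|\partial\Omega|}{n}\sum_j\eta_j^2$.
\end{itemize}
The pure-noise terms are independent of $u$ and enter with a negative sign. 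By the lower chi-square tail, with probability $1-e^{-cn}\ge 1-e^{-n\epsilon_n^2}$ they are at least $\frac12(|\Omega|+\lambda|\partial\Omega|)$. They will be used to absorb any leftover $O(1)$ fluctuations, so only the first two groups need genuine work.

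For the cross terms, I condition on the design points. Then $u\mapsto \frac1n\sum_i\epsilon_i r_u(X_i)$ is a Gaussian process with canonical metric $n^{-1/2}\|r_u-r_v\|_{n,1}$. Dudley's entropy integral, combined with the covering bounds \eqref{entropy_condition} already established on $\F_n$, bounds its supremum over $\F_n$ by $\lesssim \sqrt{\log\N/n}\cdot\sup_{\F_n}\|r_u\|_{n,1}\lesssim \epsilon_n\sup_{\F_n}\|r_u\|_{n,1}$. Borell--TIS then adds a deviation of $\epsilon_n\sup\|r_u\|_{n,1}$ with probability $1-e^{-n\epsilon_n^2}$. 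A peeling argument over shells $\{2^{k-1}\epsilon_n<\|r_u\|_{n,1}\le 2^k\epsilon_n\}$ converts this into the self-normalized bound
\[
2\Bigl|\frac{|\Omega|}{n}\sum_i\epsilon_i r_u(X_i)\Bigr|\le \tfrac14\|r_u\|_{n,1}^2+C\epsilon_n^2 \quad\text{uniformly on }\F_n .
\]
The boundary cross term is handled identically.

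For the design gaps, I use a ratio-type uniform law: with probability $1-e^{-n\epsilon_n^2}$,
\[
\|r_u\|_{L^2(\Omega)}^2\le 2\|r_u\|_{n,1}^2+C\epsilon_n^2 \quad\text{for all } u\in\F_n .
\]
This follows from Bernstein's inequality applied to $r_u^2(X)$, whose variance is at most $\|r_u\|_\infty^2\,\mathbb{E}r_u^2$. Take a union bound over an $\epsilon_n$-net of $\F_n$ in sup-norm, whose log-cardinality is again $\lesssim n\epsilon_n^2$ by the parameter-Lipschitz bound \eqref{sketch_lipschitzness}, and then peel over the size of $\mathbb{E}r_u^2$. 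The same argument applies on $\partial\Omega$. Combining everything, $\mathcal{E}(u)-\mathcal{E}_n(u)\lesssim \epsilon_n^2$ plus terms that are either absorbed or made negative by the pure-noise contribution; the two failure events together give probability at least $1-2e^{-n\epsilon_n^2}$.

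\textbf{Main obstacle.} I expect the hardest step to be the envelope $\sup_{u\in\F_n}\|r_u\|_\infty$ needed in Bernstein's inequality. The clip bounds $u$ itself, so the boundary term is harmless. However, the second derivatives of networks with weights up to $B_n$ can be polynomially large in $n$, which would inflate the variance term. My first attempt will be to restrict to $\{u\in\F_n:\|r_u\|_\infty\le M\}$, using that the approximant of Proposition \ref{approximation} lies there. Failing that, I will exploit that the negative noise term $-\frac{|\Omega|}{n}\sum_i\epsilon_i^2$ is of order one, so that only a one-sided (upper) deviation of $\|r_u\|^2-\|r_u\|_{n,1}^2$ is needed. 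For nonnegative variables this lower-tail bound for the empirical mean holds with variance controlled by $\mathbb{E}r_u^4$ rather than the sup-norm. If that still requires the envelope, I will accept a factor $\log n$ from a polynomial envelope, which fits inside $\epsilon_n^2$ because $\epsilon_n$ already carries $(\log n)^{1/2}$.
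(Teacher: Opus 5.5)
\textbf{There is a genuine gap, and it is the step you flag yourself as the main obstacle.} None of your fallbacks closes it, because over the whole sieve the design gap is not small.

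The cutoff bounds $u$ but not $\nabla u$ or $\nabla^2 u$, and $\F_n$ allows weights up to $B_n$. Take $l = C n^{1/d}$. By \eqref{univariate_bsplines} with $k=4$ and Lemmas~\ref{power_relu3} and~\ref{multiplication_relu3}, a tensor-product cubic B-spline $N$ at scale $1/l$ is realized exactly by a $\sigma_3$-network with $O(d)$ neurons and weights $O(l)$. Moreover $l \lesssim B_n$ whenever $\beta \le 2 + d(d-1)/2$, e.g.\ $d=2$, $\beta\in(2,3]$.

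Partition an interior cube of $\Omega$ into cells of side $4/l$. There are $\gtrsim C^d n$ cells and only $n$ design points, so for $C$ large some cell contains no $X_i$, deterministically. Set $u=\clip\circ(\phi+aN)$, where:
\begin{itemize}
\item $\phi$ is the network of $u_n^*$ from Proposition~\ref{approximation};
\item $N$ is supported in that empty cell;
\item $a$ is small, so the clip is inactive.
\end{itemize}
Then $u\in\F_n$ once $C_W,C_S$ exceed the constants of $u_n^*$.

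For this $u$ we have $r_u(X_i)=r_{u_n^*}(X_i)$ for all $i$ and $u=u_n^*$ on $\partial\Omega$, so the noiseless empirical loss is $O(\epsilon_n^2)$. On the other hand, ellipticity together with Poincar\'e's inequality on the cell gives $\|\div(A\nabla N)\|_{L^2}\gtrsim l^2\|N\|_{L^2}$. Hence $\|r_u\|_{L^2(\Omega)}^2\gtrsim l^{4-d}$, which is of order $n$ when $d=2$. So $\sup_{u\in\F_n}[\mathcal{E}(u)-\mathcal{E}_n(u)]$ is not even $O(1)$, and the order-one pure-noise term in your literal reading cannot absorb it.

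Your three fallbacks each fail:
\begin{itemize}
\item[(a)] It bounds the supremum over a different class. Feeding it into Lemma~\ref{contraction_empirical} would require $\Pi(\|r_u\|_\infty>M)=o(e^{-(\xi+2)n\epsilon_n^2})$. This is impossible: the prior does not depend on $n$ and gives that event positive probability for every fixed $M$, while letting $M$ grow polynomially brings back the envelope problem.
\item[(b)] It needs an $L^4$--$L^2$ comparison $\E r_u^4\lesssim\E r_u^2$, which is again an envelope condition. For the bump above, $\E r_u^4/\E r_u^2\asymp l^4$.
\item[(c)] It is a miscount. After a union bound over a net of log-size $\asymp n\epsilon_n^2$, Bernstein's range term is $\|r_u\|_\infty^2\,\epsilon_n^2$. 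A polynomial envelope therefore costs a polynomial factor; only its logarithm enters the entropy.
\end{itemize}

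\textbf{Comparison with the paper.} The paper's proof bounds only the noiseless gaps $\|r_u\|_{L^2(\Omega)}^2-\|r_u\|_{n,1}^2$ and $\|u-u^*\|_{L^2(\partial\Omega)}^2-\|u-u^*\|_{n,2}^2$. That is, it reads $\mathcal{E}_n$ as in the proof of Lemma~\ref{contraction_empirical}. With the noisy loss the bound would be vacuous at scale $\epsilon_n^2$, since $\mathcal{E}_n(u)$ carries the order-one term $\frac{|\Omega|}{n}\sum_i\epsilon_i^2$. Your Gaussian cross-term analysis is sound, but it has no counterpart there.

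The paper's core step is a local Rademacher plus Talagrand bound in ratio form, $\|r_u\|_{L^2}^2\le 2\|r_u\|_{n,1}^2+r_0$ with $r_0\asymp\epsilon_n^2$. This is the same estimate as your Bernstein/net/peeling step. The paper obtains its envelope by asserting that the cutoff makes $u$, $\nabla u$, $\nabla^2u$ uniformly bounded on $\F_n$, which is exactly what the construction above refutes. So the paper does not supply the missing idea either. A correct result needs either a sieve and prior that control derivative size (with prior tails compatible with \eqref{sieve_condition}), or a weaker conclusion.
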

The proof of Proposition \ref{generalization} follows the standard local Rademacher complexity argument which also relies on the uniform boundedness of the DNN-class thanks to the clip operation.   We  present its proof in Appendix \ref{proof_generlization}.

\paragraph{Proof  of Theorem \ref{upper_bound}}
    From the proof of Lemma \ref{contraction_empirical}, we have
    \begin{align*}
        & \Pi(u \in \F_n: \|-\div (A \nabla u) + Vu -f \|_{n, 1} \gtrsim  \epsilon_n | \D^{(n)} ) \\ &  = o_p(1). 
    \end{align*}
    Proposition \ref{generalization} implies that for all $u \in \F_n$, 
    \begin{align*}
        & \|-\div (A \nabla u) + V u -f \|_{L^2(\Omega)}^2  \\& - \|-\div (A (u - u^*)) + V(u - u^*) \|_{n, 1}^2   \lesssim \epsilon_n^2,
    \end{align*}
    with probability $1 - o(1)$. Combining these two bounds above yields 
    \begin{align*}
        & \Pi(u \in \F_n: \|-\div (A  \nabla u) + V u -f \|_{L^2(\Omega)} \gtrsim  \epsilon_n| \D^{(n)}) \\ & = o_p(1). 
    \end{align*}
    Similarly, we have
    \begin{align*}
        \Pi(u \in \F_n: \lambda \|u -g\|_{L^2(\partial \Omega)} \gtrsim \epsilon_n | \D^{(n)}) = o_p(1). 
    \end{align*}
    Moreover, the proof of Lemma \ref{contraction_empirical} implies that
    \begin{align*}
        \Pi(\F \backslash \F_n | \D^{(n)}) = o_p(1).
    \end{align*}
    The desired result then follows by assembling the three inequalities listed above.

\subsection{Proof sketch of Theorem \ref{lower_bound}}
The proof idea follows from the observation that the PINN-loss $\mathcal{E}(\cdot)$ is a summation of two nonnegative terms, $\|-\div(A \nabla \cdot) + V \cdot - f\|_{L^2(\Omega)}^2$ and $\|\cdot - g\|_{L^2(\partial \Omega)}^2$. Hence the minimax risk can be  bounded  from below using  the lower bounds for two separate  minimax problems:
\begin{align*}
    & 2 \inf_\psi \sup_{u^*} \E \mathcal{E}(\psi(\D)) \\ 
    \geq & \inf_\psi \sup_{u^*} \E \|-\div(A \nabla \psi(\D)) + V \psi(\D) - f\|_{L^2(\Omega)}^2  \\ & + \lambda \inf_\psi \sup_{u^*} \E
    \|\psi(\D) - g\|_{L^2(\partial \Omega)}^2,
\end{align*}
where the supremum is taken over all $u^* \in C^\beta(\bar{\Omega})$ such that $-\div(A \nabla u^*) + V u^* = f$ in $\Omega$ and $u^* = g$ on $\partial \Omega$ for $f \in C^{\beta - 2}(\bar{\Omega})$ and $g \in C^\beta(\partial \Omega)$, and the infimum is taken over all estimators $\psi: (\R^{d})^{\otimes n_1} \times \R^{\otimes n_1} \times (\R^d)^{\otimes n_2} \times \R^{\otimes n_2}\to C^\beta(\bar{\Omega})$. The difference between the latter two minimax problems and the standard minimax theory for nonparametric regression is that the input of the estimator consists of the data both within the domain and from the boundary, i.e. $\D = \{X_i, f_i\}_{i=1}^{n_1} \cup \{Y_j, g_j\}_{j=1}^{n_2}$. To address the first problem, we construct a packing consisting of functions that vanish on the boundary. This ensures that the KL divergence between the distribution of $\D$ induced by any two distinct functions in the packing depends only on $n_1$, rather on $n_2$. Consequently, by applying local Fano's method, we can select the packing size depending only $n_1$ to control the KL divergence, and this yields a rate $n_1^{-\frac{2(\beta-2)}{d + 2(\beta-2)}}$ for the first problem. Similarly, we construct a packing so that the functions in the packing satisfy $-\div(A \nabla \cdot ) + V \cdot = 0$ in $\Omega$. Consequently, the KL divergence between the distribution of $\D$ induced by any two functions in the packing depends only on $n_2$. By local Fano's method, we obtain a rate $n_2^{-\frac{2\beta}{d-1 + 2\beta}}$ for the second problem.

 We present the detailed proof in Appendix \ref{proof_lower_bound}.

\section{Conclusion and Discussion}
In this paper, we prove that a Bayesian PINN equipped with a sparse prior attains an optimal posterior contraction rate for estimating the $\beta$-H\"older solution of a general class of elliptic equations from noisy observations of the source term and Dirichlet boundary data. Our results further show that the Bayesian PINN is rate-adaptive with respect to the (unknown) smoothness level of the true solution. 

Our work has several limitations  that open several directions for future research. First, our posterior contraction is quantified in the norm induced by the PINN-loss, which implies contraction in the $H^{s}$-norm only for $s\leq 1/2$ (see Remark \ref{rem:boundary}), but not for higher $s$. By contrast, for elliptic PDEs with homogeneous Dirichlet data, a vanilla PINN with a hard zero boundary constraint attains the same optimal rate as ours in the stronger $H^2$-norm   \cite{lu2021machine}. The main obstacle to extending our contraction rate to $H^2$-norm in the non-homogeneous setting is that the PINN loss uses an $L^2$-residual on the boundary which is too weak to control the $H^2$-error.  Determining whether the same measurement model can yield an optimal rate in $H^2$-norm perhaps via a stronger boundary penalty or alternative variational formulations remains an interesting open problem. 

Second, our lower bound allows unequal  data sizes in interior and boundary  measurements, suggesting that the minimax rate (in the PINN-loss norm) may be retained even with fewer boundary measurements. However, our current proof technique for the posterior contraction result (Lemma \ref{contraction_empirical}) does not directly extend to the case where the numbers of internal  and  boundary measurements differ. Establishing a minimax optimal posterior contraction rate in this imbalanced-measurement setting is an important direction. 

Finally, while the present paper considers only H\"older smooth solutions, it will  be very interesting to consider PDEs that admit irregular solutions. In fact, nonlinear adaptive wavelet methods have proven \cite{binev2004adaptive,cascon2008quasi,devore1998nonlinear,devore2002adaptive,cohen2001adaptive,dahmen2002adaptive} to be  \emph{locally adaptive}, i.e. they achieve optimal rates for approximating Besov solutions of  PDEs with irregular data without knowing the heterogeneous smoothness of solutions. Whether DNN-based approaches can realize comparable local adaptivity in such non-smooth regimes remains an open question. We plan to investigate these directions in future work.  

\section*{Acknowledgment}
YL and YZ thank the support from the NSF Award DMS-2436333 and NSF CAREER Award DMS-2442463.
\section*{Impact statement}
This paper presents work whose goal is to advance the field of Machine
Learning. There are many potential societal consequences of our work, none
which we feel must be specifically highlighted here.

\bibliography{icml_ref}
\bibliographystyle{icml2026}

\newpage
\appendix
\onecolumn
\section{Proof of the results in Section \ref{proof sketch_upper_bound}} 

\subsection{Proof of Lemma \ref{contraction_empirical}} \label{proof_contraction_empirical}
\begin{proof}[Proof of Lemma \ref{upper_bound}]
    Note that for sufficiently large $M > 0$ and $J > 0$, we have
    \begin{equation} \label{contraction_empirical_errordecomposition}
    \begin{aligned}
        & P_{u^*}^{(n)} Q_{u^*}^{(n)} \left[\Pi(u \in \F: \mathcal{E}_n(u; \D^{(n)}) > (\lambda + 1) J^2 M^2 \epsilon_n^2 | \D^{(n)} )\right] \\
        = &  P_{u^*}^{(n)} Q_{u^*}^{(n)} \left[\Pi(u \in \F: \|-\div (A \nabla (u - u^*)) + V(u - u^*) \|_{n, 1}^2 + \lambda \|u - u^*\|_{n, 2}^2 > (\lambda + 1) J^2 M^2 \epsilon_n^2 | \D^{(n)} )\right] \\
        \leq & P_{u^*}^{(n)} Q_{u^*}^{(n)} \left[\Pi(u \in \F_n: \|-\div (A \nabla(u - u^*)) + V(u - u^*) \|_{n, 1}^2 + \lambda \|u - u^*\|_{n, 2}^2 > (\lambda + 1) J^2 M^2 \epsilon_{n}^2 | \D^{(n)} )\right] \\ & + P_{u^*}^{(n)} Q_{u^*}^{(n)} \left[\Pi(\F \backslash \F_n | \D^{(n)} )\right]  \\ \leq & P_{u^*}^{(n)} Q_{u^*}^{(n)} \left[\Pi(u \in \F_n: \|-\div (A \nabla (u - u^*)) + V(u - u^*) \|_{n, 1} >  J M \epsilon_n | \D^{(n)} )\right] \\ & + P_{u^*}^{(n)} Q_{u^*}^{(n)} \left[\Pi(u \in \F_n:  \|u - u^*\|_{n, 2} >  J M \epsilon_n | \D^{(n)} )\right] + P_{u^*}^{(n)} Q_{u^*}^{(n)} \left[\Pi(\F \backslash \F_n | \D^{(n)} )\right]. 
    \end{aligned}
    \end{equation}
    The proof can be structured as follows: the first two terms can be handled by the conditions \eqref{entropy_condition} and $\eqref{prior_condition}$, and the last term can be bounded by using the conditions \eqref{prior_condition} and \eqref{sieve_condition}. 

    By the condition $\eqref{entropy_condition}$ and  Lemma $\ref{testing_lemma}$ and $\ref{enhanced_testing_lemma}$, there exists a test $\phi_1$ on $\{X_i, f_i\}_{i=1}^n $ such that
    \begin{align} \label{phi1}
        P_{u^*}^{(n)} \phi_1 \leq e^{C n \epsilon_n^2} \frac{e^{-\frac{1}{32}n M^2 \epsilon_n^2}}{1 - e^{-\frac{1}{32}n M^2 \epsilon_n^2}}, \quad P_u^{(n)} (1-\phi_1) \leq e^{-\frac{1}{32}n M^2 \epsilon_n^2 j^2},
    \end{align}
    for all $j \in \mathbb{N}$ and $u \in \F$ with $\|-\div (A \nabla (u - u^*)) + V(u - u^*) \|_{n, 1} > j M \epsilon_n$. Then by the first inequality of \eqref{phi1}, we have
    \begin{equation} \label{conditioned_on_phi1}
    \begin{aligned}
        & P_{u^*}^{(n)} Q_{u^*}^{(n)} \left[\Pi(u \in \F_n: \|-\div (A \nabla (u - u^*)) + V(u - u^*) \|_{n, 1} >  J M \epsilon_n | \D^{(n)} ) \phi_1 \right] \\
        \leq & P_{u^*}^{(n)} Q_{u^*}^{(n)} ( \phi_1) = P_{u^*}^{(n)} (\phi_1) \leq e^{C n \epsilon_n^2} \frac{e^{-\frac{1}{32}n M^2 \epsilon_n^2}}{1 - e^{-\frac{1}{32}n M^2 \epsilon_n^2}}.
    \end{aligned}
    \end{equation}
    Moreover, by Fubini's theorem and the second inequality of \eqref{phi1},
    \begin{equation} \label{upper_bound_numerator}
        \begin{aligned}
            & P_{u^*}^{(n)} Q_{u^*}^{(n)} \left[ \int_{\|-\div (A \nabla (u - u^*)) + V(u - u^*) \|_{n, 1} \geq J M \epsilon_{n}} \frac{p_{u}^{(n)}}{p_{u^*}^{(n)}} \frac{q_{u}^{(n)}}{q_{u^*}^{(n)}} d \Pi(u)  (1 - \phi_{1}) \right] \\
     = & \int_{\|-\div (A \nabla (u - u^*)) + V(u - u^*) \|_{n, 1} \geq J M \epsilon_{n}}    P_{u}^{(n)} Q_{u}^{(n)} \left[(1 - \phi_{1}) \right] d \Pi(u) \leq  e^{-\frac{1}{36} n M^2 \epsilon_{n}^2 J^2}. 
        \end{aligned}
    \end{equation}
Now together with the prior condition \eqref{prior_condition}, we can apply Lemma \ref{evidence_lower_bound} to lower bound the normalization constant such that for any $C_1 > 0$
\begin{equation} \label{lower_bound_denominator}
    \begin{aligned}
        \int \frac{p_{u}^{(n)}}{p_{u^*}^{(n)}} \frac{q_{u}^{(n)}}{q_{u^*}^{(n)}} d \Pi(u) & > \int_{\mathcal{E}_n(u; \D^{(n)}) \leq \epsilon_n^2} \frac{p_{u}^{(n)}}{p_{u^*}^{(n)}} \frac{q_{u}^{(n)}}{q_{u^*}^{(n)}} d \Pi(u) \\
    & > \Pi(u \in \F: \mathcal{E}_n(u; \D^{(n)}) \leq \epsilon_n^2) e^{-(1+C_1) n\epsilon_n^2 } \\
    & > e^{-\xi n\epsilon_n^2 - (1+C_1) n \epsilon_n^2},
    \end{aligned}
\end{equation}
with probability at least $1 - C_1^{-1 } (n\epsilon_n^2)^{-1}$. We denote the event on which \eqref{lower_bound_denominator} holds as $E$. By combining \eqref{upper_bound_numerator} and \eqref{lower_bound_denominator}, we reach
\begin{equation} \label{conditioned_on_ph1complement}
    \begin{aligned}
        & P_{u^*}^{(n)} Q_{u^*}^{(n)} \left[\Pi(u \in \F_n: \|-\div (A \nabla (u - u^*)) + V(u - u^*) \|_{n, 1} >  J M \epsilon_n | \D^{(n)} ) (1 - \phi_1) \right] \\
        \leq & P_{u^*}^{(n)} Q_{u^*}^{(n)} \left[\Pi(u \in \F_n: \|-\div (A \nabla (u - u^*)) + V(u - u^*) \|_{n, 1} >  J M \epsilon_n | \D^{(n)} ) (1 - \phi_1) \mathbf{1}_E  \right] + P_{u^*}^{(n)} Q_{u^*}^{(n)} (\mathbf{1}_{E^c})\\
         = & P_{u^*}^{(n)} Q_{u^*}^{(n)} \left[ \frac{ \int_{\|-\div (A \nabla (u - u^*)) + V(u - u^*) \|_{n, 1} >  J M \epsilon_n} \frac{p_{u}^{(n)}}{p_{u^*}^{(n)}} \frac{q_{u}^{(n)}}{q_{u^*}^{(n)}} d \Pi(u)}{ \int \frac{p_{u}^{(n)}}{p_{u^*}^{(n)}} \frac{q_{u}^{(n)}}{q_{u^*}^{(n)}} d \Pi(u)} (1 - \phi_1) \mathbf{1}_E  \right] + P_{u^*}^{(n)} Q_{u^*}^{(n)} (\mathbf{1}_{E^c}) \\
         \leq &  e^{-\frac{1}{36} n M^2 \epsilon_{n}^2 J^2 + \xi n\epsilon_n^2 + (1+C_1) n \epsilon_n^2} + C_1^{-1} (n\epsilon_n^2)^{-1}. 
    \end{aligned}
\end{equation}
Consequently, combining \eqref{conditioned_on_phi1} and \eqref{conditioned_on_ph1complement} yields a bound on the first term in \eqref{contraction_empirical_errordecomposition}
\begin{equation} \label{first_term}
    \begin{aligned}
        & P_{u^*}^{(n)} Q_{u^*}^{(n)} \left[\Pi(u \in \F_n: \|-\div (A \nabla (u - u^*)) + V(u - u^*) \|_{n, 1} >  J M \epsilon_n | \D^{(n)} ) \right] \\ \leq &  e^{C n \epsilon_n^2} \frac{e^{-\frac{1}{32}n M^2 \epsilon_n^2}}{1 - e^{-\frac{1}{32}n M^2 \epsilon_n^2}} + e^{-\frac{1}{36} n M^2 \epsilon_{n}^2 J^2 + \xi n\epsilon_n^2 + (1+C_1) n \epsilon_n^2} + C_1^{-1} (n\epsilon_n^2)^{-1}.
    \end{aligned}
\end{equation}
Analogously, the second term in \eqref{contraction_empirical_errordecomposition} can be bounded by 
\begin{equation} \label{second_term}
    \begin{aligned}
        & P_{u^*}^{(n)} Q_{u^*}^{(n)} \left[\Pi(u \in \F_n:  \|u - u^*\|_{n, 2} >  J M \epsilon_n | \D^{(n)} )\right] \\ \leq &  e^{C n \epsilon_n^2} \frac{e^{-\frac{1}{32}n M^2 \epsilon_n^2}}{1 - e^{-\frac{1}{32}n M^2 \epsilon_n^2}} + e^{-\frac{1}{36} n M^2 \epsilon_{n}^2 J^2 + \xi n\epsilon_n^2 + (1+C_1) n \epsilon_n^2} + C_1^{-1} (n\epsilon_n^2)^{-1}.
    \end{aligned}
\end{equation}
To bound the third term in \eqref{contraction_empirical_errordecomposition}, note that by Fubini's theorem and the sieve condition \eqref{sieve_condition},
\begin{equation} \label{third_term_numerator}
    \begin{aligned}
          P_{u^*}^{(n)} Q_{u^*}^{(n)} \left[\int_{\F \backslash \F_n} \frac{p_{u}^{(n)}}{p_{u^*}^{(n)}} \frac{q_{u}^{(n)}}{q_{u^*}^{(n)}} d \Pi(u)\right] = \left[ \int_{\F \backslash \F_n} P_u^{(n)} Q_u^{(n)} d \Pi(u)  \right] \leq \Pi(\F \backslash \F_n) = o(e^{-(\xi + 2) n \epsilon_n^2}). 
    \end{aligned}
\end{equation}
Then by \eqref{lower_bound_denominator} and \eqref{third_term_numerator}, we have 
\begin{equation} \label{third_term}
    \begin{aligned}
          & P_{u^*}^{(n)} Q_{u^*}^{(n)}\left[\Pi(\F \backslash \F_n | \D^{(n)} )\right] \\
         \leq &  P_{u^*}^{(n)} Q_{u^*}^{(n)}\left[\Pi(\F \backslash \F_n | \D^{(n)} ) \mathbf{1}_E\right] + P_{u^*}^{(n)} Q_{u^*}^{(n)} (\mathbf{1}_{E^c}) \\
         \leq & P_{u^*}^{(n)} Q_{u^*}^{(n)} \left[\frac{\int_{\F \backslash \F_n} \frac{p_{u}^{(n)}}{p_{u^*}^{(n)}} \frac{q_{u}^{(n)}}{q_{u^*}^{(n)}} d \Pi(u)}{\int \frac{p_u^{(n)}}{p_{u^*}^{(n)}} \frac{q_{u}^{(n)}}{q_{u^*}^{(n)}} d \Pi(u)}\right] + P_{u^*}^{(n)} Q_{u^*}^{(n)} (\mathbf{1}_{E^c}) \\
         \leq & e^{-(\xi+2) n\epsilon_n^2 + \xi n\epsilon_n^2 + (1 + C_1) n\epsilon_n^2} o(1) + C_1^{-1} (n\epsilon_n^2)^{-1} = e^{(C_1 - 1) n\epsilon_n^2} o(1)+ C_1^{-1} (n\epsilon_n^2)^{-1}.
    \end{aligned}
\end{equation}
Finally, combining \eqref{first_term}, \eqref{second_term} and \eqref{third_term} yields an upper bound on $\eqref{contraction_empirical_errordecomposition}$
\begin{equation*}
    \begin{aligned}
        & P_{u^*}^{(n)} Q_{u^*}^{(n)} \left[\Pi(u \in \F: \mathcal{E}_n(u; \D^{(n)}) > (\lambda + 1) J^2 M^2 \epsilon_n^2 | \D^{(n)} )\right] \\ \leq &
         2 e^{C n \epsilon_n^2} \frac{e^{-\frac{1}{32}n M^2 \epsilon_n^2}}{1 - e^{-\frac{1}{32}n M^2 \epsilon_n^2}} + 2e^{-\frac{1}{36} n M^2 \epsilon_{n}^2 J^2 + \xi n\epsilon_n^2 + (1+C_1) n \epsilon_n^2} + 3 C_1^{-1} (n\epsilon_n^2)^{-1} + e^{(C_1 - 1) n\epsilon_n^2} o(1), 
    \end{aligned}
\end{equation*}
and by taking taking $M, J$ to be sufficiently large and $C_1 = 1$, the upper bound tends to zero as $n \to \infty$. 
\end{proof}

\subsection{Proof of Proposition \ref{approximation}} \label{proof_approximation}

\paragraph{Univariate B-splines} Fix an arbitrary integer $l \in \mathbb{N}$. Consider a corresponding uniform partition $\pi_{l}$ of $[0,1]$:
\[
\pi_{l} : 0 = t^{(l)}_{0} < t^{(l)}_{1} < \cdots < t^{(l)}_{l-1} < t^{(l)}_{l} = 1,
\]
where $t^{(l)}_{i} = \frac{i}{l}$ for all $0 \leq i \leq l$.  
For any $k \in \mathbb{N}$, we define an extended partition $\pi_{l,k}$ as
\[
\pi_{l,k} : t^{(l)}_{-k+1} = \cdots = t^{(l)}_{-1} = 0 = t^{(l)}_{0} < t^{(l)}_{1} < \cdots < t^{(l)}_{l-1} < t^{(l)}_{l} = 1 = t^{(l)}_{l+1} = \cdots = t^{(l)}_{l+k-1}.
\]
Based on the extended partition $\pi_{l,k}$, the univariate B-splines of order $k$ are defined by
\begin{equation*} 
N^{(k)}_{l,i}(x) := (-1)^{k}\,(t^{(l)}_{i+k} - t^{(l)}_{i}) \cdot 
[t^{(l)}_{i}, \ldots, t^{(l)}_{i+k}] \, (\max\{ (x - t), 0 \})^{k-1}, 
\quad x \in [0,1], \; i \in I_{l,k},
\end{equation*}
where $I_{l,k} = \{-k+1,-k+2,\ldots,l-1\}$ and $[t^{(l)}_{i}, \ldots, t^{(l)}_{i+k}]$ denotes the divided difference operator.
Equivalently, for any $x \in [0,1]$, the univariate B-splines $N^{(k)}_{l,i}(x)$ can be expressed explicitly as
\begin{equation} \label{univariate_bsplines}
N^{(k)}_{l,i}(x) =
\frac{l^{k-1}}{(k-1)!} \sum_{j=0}^{k} (-1)^{j} \binom{k}{j}
\left(\max\left\{ x - \tfrac{(i+j)}{l}, 0 \right\}\right)^{k-1},  \quad i \in I_{l,k}. 
\end{equation}

\paragraph{Multivariate B-splines} For any index vector $\bm{i} = (i_{1}, i_{2}, \ldots, i_{d}) \in I^{d}_{l,k}$, the corresponding multivariate B-spline is defined as 
\begin{equation*}
N^{(k)}_{l,\bm{i}}(x) := \prod_{j=1}^{d} N^{(k)}_{l,i_{j}}(x_{j}).
\end{equation*}

\begin{lemma}[\cite{schumaker2007spline}] \label{bspline_approximation}
    Consider a partition of $[0, 1]^d$,
    \begin{align*}
        \Omega_{\bm{i}} = \{x \in [0,1]^d: x_{j} \in [t_{i_{j}}, t_{i_{j}+k}], \; 1 \leq j \leq d\}.
    \end{align*}
    There exists a dual basis $\{\lambda_{\bm{i}}\}_{i \in I^d_{k, l}}$ satisfying $\lambda_{\bm{j}} N^{(k)}_{l, \bm{i}} = \delta_{\bm{i}, \bm{j}}$. Moreover, for any $p \in [1, \infty]$,
    \begin{align} \label{interpolation_operator_boundedness}
        |\lambda_{\bm{i}}(f)| \leq 9^{d(k-1)}(2k+1)^{d} \left(\frac{k}{l}\right)^{-d/p} \, \|f\|_{L^{p}(\Omega_{\bm{i}})}, 
\quad  f \in L^{p}([0,1]^d).
    \end{align}
    Define the corresponding interpolation operator $Q_{k, l}$ as 
    \begin{align} \label{interpolation_operator}
        Q_{k, l} f \coloneqq \sum_{\bm{i} \in I_{k, l}^d } \lambda_{\bm{i}}(f) N_{l, \bm{i}}^{(k)}, \quad f \in L^p([0, 1]^d). 
    \end{align}
    Moreover, let $0 < m < \beta \leq k$. Then for any $f \in C^{\beta}([0,1]^d)$, we have 
    \begin{align*}
        \|f - Q_{k, l} f\|_{C^m([0, 1]^d)} \lesssim l^{-(\beta-m)} \|f\|_{C^{\beta}([0,1]^d)}.
    \end{align*}
\end{lemma}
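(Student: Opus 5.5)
The statement is Lemma \ref{bspline_approximation}, a classical result attributed to \cite{schumaker2007spline}. I would prove it by the standard quasi-interpolant argument: locality plus polynomial reproduction plus stability, combined with a Bramble--Hilbert/Taylor estimate on each cell.

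\textbf{Step 1: construct the univariate dual functionals.} Following de Boor--Fix, I would take $\lambda_i$ supported on a single subinterval $J_i \subset [t_i, t_{i+k}]$ of length $1/l$. It is defined through the $L^2(J_i)$-dual basis of the restriction of the local B-splines to $J_i$, which are linearly independent polynomials of degree $k-1$ there. This gives $\lambda_j N_{l,i}^{(k)} = \delta_{ij}$.

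\textbf{Step 2: stability.} The stability bound $|\lambda_i f| \le 9^{k-1}(2k+1)(k/l)^{-1/p}\|f\|_{L^p}$ is the Schumaker estimate. It comes from Markov/Remez-type inequalities for polynomials on an interval, after rescaling to unit length; the factor $l^{1/p}$ is the scaling. The multivariate functional is the tensor product $\lambda_{\bm i} = \otimes_j \lambda_{i_j}$. It inherits biorthogonality, and applying the 1D bound coordinatewise (Fubini) gives \eqref{interpolation_operator_boundedness} with the product constant.

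\textbf{Step 3: consequences for $Q_{k,l}$.} Biorthogonality and the fact that tensor splines of coordinate degree $k-1$ lie in the span give $Q_{k,l}P = P$ for every polynomial $P$ of coordinate degree $<k$, in particular total degree $<\beta\le k$. Locality says that on a cell $R$ (a product of knot intervals) only $O(k^d)$ basis functions are nonzero, and their functionals live on the enlarged cell $\tilde R$ of diameter $\lesssim k/l$.

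\textbf{Step 4: derivative estimate.} Fix $x\in R$ and let $P$ be the Taylor polynomial of $f$ of degree $q=\lceil\beta\rceil-1$ at the center of $R$. Then
\[
f-Q_{k,l}f = (f-P) - Q_{k,l}(f-P).
\]
\begin{itemize}
\item The Hölder condition gives $\|\partial^{\bm\alpha}(f-P)\|_{L^\infty(\tilde R)} \lesssim l^{-(\beta-|\bm\alpha|)}\|f\|_{C^\beta}$ for $|\bm\alpha|\le m$.
\item For the second term, use $|\partial^{\bm\alpha}N^{(k)}_{l,\bm i}|\lesssim l^{|\bm\alpha|}$, from the explicit formula \eqref{univariate_bsplines} or the derivative recursion, when $|\bm\alpha|\le k-2$ so that derivatives are bounded. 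Combined with the $p=\infty$ stability bound, this gives $|\partial^{\bm\alpha}Q_{k,l}(f-P)(x)|\lesssim l^{|\bm\alpha|}\|f-P\|_{L^\infty(\tilde R)}\lesssim l^{|\bm\alpha|-\beta}\|f\|_{C^\beta}$.
\end{itemize}
Summing over $|\bm\alpha|\le m$ gives the $C^m$ bound whenever $m$ is an integer.

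\textbf{Step 5 (main obstacle): the Hölder seminorm of order $m-\lfloor m\rfloor$ when $m$ is not an integer, and the smoothness of the splines.} For the seminorm I would compare points $x,y$ separately in two regimes:
\begin{itemize}
\item If $|x-y|\ge 1/l$, the bound follows from the sup bounds on derivatives of order $\lfloor m\rfloor$.
\item If $|x-y|<1/l$, bound the difference by $|x-y|\cdot\sup|\partial^{\lfloor m\rfloor+1}(\cdot)|$ on adjacent cells. This needs one extra order of spline smoothness, i.e. $\lfloor m\rfloor+1\le k-2$. Otherwise, interpolate the bounds at integers $\lfloor m\rfloor$ and $\lceil m\rceil$.
\end{itemize}
Spline smoothness across knots is $C^{k-2}$, so I would require, if necessary, $k$ somewhat larger than $\beta$ (e.g.\ $k\ge \beta+2$). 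This is harmless for the intended application with $m=2$, where the splines of order $k\ge 4$ built from $\sigma_3$ are $C^2$.
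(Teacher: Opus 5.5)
The paper gives no proof of this lemma; it imports it from Schumaker. Your outline (local dual functionals, tensorization, polynomial reproduction, local Taylor estimate) is the standard argument behind that citation, so the route is the same.

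\textbf{Your Step~5 remark.} It points to a defect of the statement itself, not of your proof. Derivatives of order $k-1$ of an order-$k$ spline jump across knots, while $f\in C^\beta$ with $\beta>k-1$ has continuous $(k-1)$-st derivatives. So for $m\in(k-1,k)$ the norm $\|f-Q_{k,l}f\|_{C^m}$ is infinite for generic $f$. For example, take $f=x_1^k$ and $l$ large: otherwise $Q_{k,l}f$ would be a single polynomial of degree $k-1$ in $x_1$, contradicting the sup-norm error $\lesssim l^{-k}$. Hence $\beta\le k$ alone is not enough.

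Your repair is stronger than necessary, though. The short-range step only needs $\partial^{\bm{\alpha}}Q_{k,l}f$ with $|\bm{\alpha}|=\lfloor m\rfloor$ to be Lipschitz. This holds as soon as $\lfloor m\rfloor\le k-2$, because the derivatives of order $\lfloor m\rfloor+1$ only need to exist piecewise and be bounded (by $\lesssim l^{\lfloor m\rfloor+1}$ times the local coefficients); they need not be continuous. So $m<k-1$ suffices, and neither $\lfloor m\rfloor+1\le k-2$ nor $k\ge\beta+2$ is needed.

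\textbf{The gap: the non-spline part for $|x-y|<1/l$.} You bound the increment of $\partial^{\bm{\alpha}}(f-Q_{k,l}f)$ by $|x-y|\sup|\partial^{\lfloor m\rfloor+1}(\cdot)|$. For the $f$-part this requires $f\in C^{\lfloor m\rfloor+1}$, which fails when $m<\beta<\lfloor m\rfloor+1$ (e.g.\ $m=2.3$, $\beta=2.5$). Interpolating between orders $\lfloor m\rfloor$ and $\lceil m\rceil$ hits the same obstruction, since then $f-Q_{k,l}f\notin C^{\lceil m\rceil}$.

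The fix is to keep the splitting $f-Q_{k,l}f=(f-P)-Q_{k,l}(f-P)$ from Step~4, with $P$ centred near $x$ and the enlarged box taken large enough to contain $y$.
\begin{itemize}
\item If $\beta\le\lfloor m\rfloor+1$, then $\deg P=\lceil\beta\rceil-1=\lfloor m\rfloor$, so $\partial^{\bm{\alpha}}P$ is constant. The Hölder condition on $f$ then gives directly
\[
|\partial^{\bm{\alpha}}f(x)-\partial^{\bm{\alpha}}f(y)|\le \|f\|_{C^\beta}\,|x-y|^{\beta-\lfloor m\rfloor}\le \|f\|_{C^\beta}\,|x-y|^{m-\lfloor m\rfloor}\,l^{-(\beta-m)} .
\]
\item If $\beta>\lfloor m\rfloor+1$, your mean-value bound on $f-P$ is fine.
\item The spline part $Q_{k,l}(f-P)$ has Lipschitz constant $\lesssim l^{\lfloor m\rfloor+1}\|f-P\|_{L^\infty}\lesssim l^{\lfloor m\rfloor+1-\beta}$. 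For $|x-y|<1/l$ we have $|x-y|\,l^{\lfloor m\rfloor+1-\beta}\le |x-y|^{m-\lfloor m\rfloor}\,l^{m-\beta}$, as required.
\end{itemize}

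\textbf{The stability constant.} Your construction does not deliver the displayed constant. By rescaling, your $L^2(J_i)$-dual functionals give $|\lambda_i f|\le c_k\,l^{1/p}\|f\|_{L^p(J_i)}$ for some $c_k$ depending only on $k$. Note this is $l^{1/p}$, which is weaker than the displayed $(l/k)^{1/p}$. This is all the paper's application uses, since it only needs $B=O(l^d)$. The explicit factor $9^{k-1}(2k+1)$, however, belongs to the particular functionals Schumaker constructs. Either use those, or state the bound with an unspecified $c_k$.
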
 

\begin{proof}[Proof of Proposition \ref{approximation}]
Take $D > 0$ to be such that $\Omega \subset [-D, D]^d$.
By Whitney extension theorem, there exists an extension $\bar{f} \in C^{\beta}([-D, D]^d)$ of $f \in C^\beta(\bar{\Omega})$ such that $\bar{f}|_{\Omega} = f$ and $\|\bar{f}\|_{C^\beta([-D, D]^d)} \leq C_D \|f\|_{C^\beta(\bar{\Omega})}$ where $C_D$ is some constant not depending on $f$. Define $\tilde{f}(x) = \bar{f}(2Dx - D)$ so that $\tilde{f} \in C^{\beta}([0,1]^d)$. 

Take $k \geq \beta$ such that $k-1$ is a multiple of $3$. Then by Lemma \ref{bspline_approximation}, there exists $Q_{k, l}$ such that
    \begin{align*}
        \|\tilde{f} - Q_{k, l} \tilde{f}\|_{C^{2}([0, 1]^d)} \lesssim l^{-(\beta-2)} \|\tilde{f}\|_{C^{\beta}([0,1]^d)} \lesssim  l^{-(\beta-2)} \|\bar{f}\|_{C^\beta([-D, D]^d)} \lesssim  l^{-(\beta-2)} \|f\|_{C^\beta(\bar{\Omega})},
    \end{align*}
    where the last inequality follows from the boundedness of the extension operator.
Define $\phi(x) = [Q_{k, l} \tilde{f}](\frac{x + D}{2D})$ so that we have
\begin{align*}
   \|f - \phi\|_{C^2(\Omega)} \leq \|\bar{f} - \phi\|_{C^{2}([-D, D]^d)} \lesssim  \|\tilde{f} - Q_{k, l} \tilde{f}\|_{C^{2}([0, 1]^d)} \lesssim   l^{-(\beta-2)} \|f\|_{C^\beta(\Omega)}. 
\end{align*}
    We now need to construct a $\sigma_3$-network to implement $Q_{k, l} \tilde{f}(\frac{x + D}{2D})$, the explicit formula of which can be found in \eqref{interpolation_operator}. 

    Thanks to Lemma \ref{power_relu3}, we can implement $x \mapsto \frac{x + D}{2D}$ for $x \in [-D, D]$ by a single layer $\sigma_3$-network. 
    
    To implement $Q_{k,l} \tilde{f}$, we first construct a neural network to implement $(\max\{x - c, 0\})^{k-1}$ as follows. The first layer prepares $\frac{k-1}{3}$ copies of $\sigma_3(x-c)$ and $2^{\lceil \log_2  \frac{k-1}{3}\rceil} - \frac{k-1}{3}$ copies of $1$, which requires $2^{\lceil \log_2  \frac{k-1}{3}\rceil}$ neurons. The second layer performs multiplication between adjacent neurons, which requires $9$ neurons for each pair according to Lemma \ref{multiplication_relu3}. The total number of neuron requires in the second layer should be $9 \cdot 2^{\lceil \log_2  \frac{k-1}{3}\rceil - 1}$. Looping this procedure results in a $1 + \lceil \log_2  \frac{k-1}{3}\rceil$ layer network implementing $(\max\{x-c, 0\})^{k-1}$ precisely. Counting the number of neurons in each layer yields that the width is at most $9 \cdot 2^{\lceil \log_2  \frac{k-1}{3}\rceil - 1} = O(k)$. The number of nonzero parameters in the first layer is
        \begin{align*}
            2 \cdot \frac{k-1}{3} + 2^{\lceil \log_2  \frac{k-1}{3}\rceil} - \frac{k-1}{3},
        \end{align*}
        and the number of nonzero parameters in the second layer is at most  
        \begin{align*}
            18 \cdot 2^{\lceil \log_2  \frac{k-1}{3}\rceil-1}.
        \end{align*}
        From the third layer, the number of nonzero parameters in the second layers is at most 
        \begin{align*}
            C 2^{\lceil \log_2  \frac{k-1}{3}\rceil-l}, \quad l = 2, \ldots, {\lceil \log_2  \frac{k-1}{3}\rceil},
        \end{align*}
        where $C$ is a universal constant. 
        Hence the sparsity can bounded by
        \begin{align*}
            \frac{k-1}{3} + C \sum_{l=0}^{{\lceil \log_2  \frac{k-1}{3}\rceil}} 2^l = O(k + 2^{{\lceil \log_2  \frac{k-1}{3}\rceil}}) = O(k). 
        \end{align*}

    Next we aim to find an efficient way to implement the coefficients of those $(\max\{x-c, 0\})^{k-1}$. A closer look into \eqref{univariate_bsplines} shows that the coefficients can be bounded as 
    \begin{align*}
        \frac{l^{k-1}}{(k-1)!} \binom{k}{j} \leq l^{k-1}, 
    \end{align*}
    Then $\frac{l^{k-1}}{(k-1)!} \binom{k}{j}$ can be written as a multiplication of at most $\lceil \frac{k-1}{d} \rceil$ factors with each less than or equal to $l^d$. By the multiplication procedure displayed above, the coefficients can be implemented with $O(\log k)$ layers, $O(k)$-width and $O(k)$ nonzero parameters. So far, we can construct a $\sigma_3$-network with $L = O(\log k)$, $W = O(k)$, $S = O(k)$, $B = O(l^d)$ and it can implement each $N_{l, i}^{(k)}$ precisely. 

    To obtain each basis function $N_{l, \bm{i}}^{k}$, we need to tensorize $\{N_{l, i_j}^{(k)}\}_{j=1}^d$ together, and that requires a network with $L = O(\log d + \log k)$,  $W = O(d k)$, $S = O(dk)$  and $B = O(l^d)$. 

    Finally, we can parallelize the previous implementation for each $N_{l, \bm{i}}^{(k)}$ together to obtain a network with $L = O(\log d + \log k)$, $W = O(kd (l+k)^d) = O(k d \cdot l^d)$, $S = O(kd (l+k)^d) = O(k d \cdot l^d)$ and it can implement $Q_{k, l} f \coloneqq \sum_{\bm{i} \in I_{k, l}^d } \lambda_{\bm{i}}(f) N_{l, \bm{i}}^{(k)}$ precisely. The boundedness $B$ can be identified from \eqref{interpolation_operator_boundedness} and our construction above 
    \begin{align*}
        B = \max\{O(l^d), |\lambda_{\bm{i}} f|\} = \max\{O(l^d), O(9^{kd} k^d l^d)\} = O(l^d). 
    \end{align*}
\end{proof}

\subsection{Verifying the conditions in Lemma \ref{contraction_empirical}} \label{proof_verify_conditions}
We choose the cutoff function to satisfy  $\clip(x) = x$ when $-K - 1\leq x \leq K +1$, and $\clip(x) = 2(K+1)$ when $x \geq 2(K+1)$, and $\clip(x) = -2(K+1)$ when $x \leq -2(K+1)$. The detailed construction can be found in Appendix \ref{construction_cutoff}. 
\paragraph{Verifying \eqref{entropy_condition}} We want to show that there exists some constant $C$ such that
\begin{align*}
     \begin{split}
          \log \N(\frac{\epsilon_n}{4}, \F_n, \|-\div (A \nabla\cdot) + V \cdot \|_{n, 1}) \leq C n \epsilon_n^2, \\
          \log \N(\frac{\epsilon_n}{4}, \F_n, \|\cdot\|_{n, 2}) \leq C n \epsilon_n^2.
    \end{split}
     \end{align*}
By the construction of $\F_n$, we have 
\begin{equation} \label{entropy_condition_label1}
\begin{aligned}
    \N(\frac{\epsilon_n}{4}, \F_n, \|-\div (A \nabla\cdot) + V \cdot \|_{n, 1}) & \leq \sum_{W =1}^{W_n} \sum_{S=1}^{S_n} \N(\frac{\epsilon_n}{4}, \F(L, W, S, B_n), \|-\div (A \nabla\cdot) + V \cdot \|_{n, 1}) \\ & \leq W_n S_n \N(\frac{\epsilon_n}{4}, \F(L, W_n, S_n, B_n), \|-\div (A \nabla\cdot) + V \cdot \|_{n, 1}).
\end{aligned}
\end{equation}
Lemma \ref{lipschitzness} shows that for any $\theta_1, \theta_2 \in \Theta(L, W, S, B)$, we have 
\begin{equation} \label{def_cp}
    \begin{aligned}
    & \sup_{x \in \Omega} |\clip \circ f_{\theta_1}(x) - \clip \circ f_{\theta_2}(x)| \vee \sup_{x \in \Omega} \|\nabla (\clip \circ f_{\theta_1})(x) - \nabla (\clip \circ f_{\theta_2})(x)\|_\infty  \\ & \vee \sup_{x \in \Omega}\|\nabla^2 (\clip \circ f_{\theta_1})(x) - \nabla^2 (\clip \circ f_{\theta_2})(x) \|_{\infty} \\  & \leq C_0  W^{\frac{3^{L}-1}{2}} (B \vee d)^{\frac{5 \cdot 3^{L}-1}{2}} \cdot \|\theta_1 - \theta_2\|_\infty \eqqcolon C_p \cdot \|\theta_1 - \theta_2\|_\infty
\end{aligned}
\end{equation}
By the boundedness assumption on the coefficients $A$ and $V$, we further have 
\begin{equation}\label{def_cc}
    \begin{aligned}
    & \sup_{x \in \Omega} |-\div (A \nabla (\clip \circ f_{\theta_1}- \clip \circ f_{\theta_2}))(x) + V(x) (\clip \circ f_{\theta_1} - \clip \circ f_{\theta_2})(x)| \\
     \leq & \sup_{x \in \Omega} |A(x) \cdot \nabla^2 (\clip \circ f_{\theta_1}- \clip \circ f_{\theta_2})(x)| + \sup_{x \in \Omega} |\nabla A(x) \cdot \nabla (\clip \circ f_{\theta_1}- \clip \circ f_{\theta_2})(x)| \\
     & + \sup_{x \in \Omega} |V(x) (\clip \circ f_{\theta_1}- \clip \circ f_{\theta_2})(x)|\\
     \leq & (C_A d^2 + C_A d + V_{\text{max}}) C_p \|\theta_1 - \theta_2\|_\infty \eqqcolon C_c C_p \|\theta_1 - \theta_2\|_\infty. 
\end{aligned}
\end{equation}
Then we can bound the covering number of the function space by the covering number of the parameter space 
\begin{equation} \label{entropy_condition_label2}
    \begin{aligned}
    & \N(\frac{\epsilon_n}{4}, \F(L, W, S, B), \|-\div (A \nabla\cdot) + V \cdot \|_{n, 1}) \leq \binom{T}{S} \left(\frac{\epsilon_{n} / 4}{C_c C_p  |\Omega| B}\right)^{-S} \\ \leq & \binom{2LW^2}{S} \left(\frac{\epsilon_{n} / 4}{C_c  C_0 |\Omega| W^{\frac{3^{L}-1}{2}} (B \vee d)^{\frac{5 \cdot 3^{L}-1}{2}} B}\right)^{-S} \\
     \leq & (2 L W^2)^S \left(\frac{\epsilon_{n} / 4}{C_c C_0 |\Omega| W^{\frac{3^{L}-1}{2}} (B \vee d)^{\frac{5 \cdot 3^{L}-1}{2}} B}\right)^{-S}. 
\end{aligned}
\end{equation}
Combining \eqref{entropy_condition_label1} and \eqref{entropy_condition_label2} yields 
\begin{equation*}
\begin{aligned}
    & \log \N(\frac{\epsilon_n}{4}, \F_n, \|-\div (A \nabla\cdot) + V \cdot \|_{n, 1}) \\
    \leq & \log W_n + \log S_n + S_n \log (2 L W_n^2) - S_n \log \left(\frac{\epsilon_{n} / 4}{C_c C_0 |\Omega| W_n^{\frac{3^{L}-1}{2}} (B_n \vee d)^{\frac{5 \cdot 3^{L}-1}{2}} B_n}\right) \\
    \leq & C_1 n^{\frac{d}{d + 2(\beta-2)}} \log n = C_1 n \epsilon_n^2 ,
\end{aligned}
\end{equation*}
where the last step follows from the construction of $\F_n$ that $W_n = C_W n^{\frac{d}{d + 2(\beta-2)}}, S_n = C_S n^{\frac{d}{d + 2(\beta-2)}}, B_n = C_B n^{\frac{d}{d + 2(\beta-2)}} \log n$, and $C_1$ depends on $L, C_W, C_S, C_B, C_0, C_A, V_{\text{max}}, \Omega$. 

Similarly, one can obtain 
\begin{equation*}
    \sup_{\epsilon > \epsilon_n} \N(\frac{\epsilon}{4}, \F_n, \|\cdot\|_{n, 2}) \leq C_2 n \epsilon_n^2,
\end{equation*}
where $C_2$ depends on $L, C_W, C_S, C_B, C_0, \Omega$. Then we can choose $C = \max\{C_1, C_2\}$. 

\paragraph{Verifying \ref{prior_condition}} We want to show that there exists some $\xi > 0$ such that
\begin{align*}
         \Pi(u \in \F: \mathcal{E}_n (u; \D^{(n)} ) \leq \epsilon_n^2) \geq \exp(-\xi n \epsilon_n^2).
     \end{align*} 
Then by Proposition \ref{approximation}, there exists a $u_n^* \in \F(L, W_n^*, S_n^*, B_n^*)$ with $W_n^* = C_W^* n^{\frac{d}{d + 2(\beta-2)}}, S_n^* = C_S^* n^{\frac{d}{d + 2(\beta-2)}}, B_n^* = C_B^* n^{\frac{d}{d + 2(\beta-2)}}$ such that 
\begin{align*}
    \|u^* - u_n^*\|_{C^{2}(\bar{\Omega})}^2 \leq \frac{3 \epsilon_n^2}{4 ( |\Omega| + \lambda |\partial \Omega|)}.
\end{align*}
 Moreover, let $\gamma_n^*$ and $\theta_n^*$ be the architecture and the parameters of $u_n^*$ respectively, and we have
\begin{equation*}
    \begin{aligned}
        & \{u \in \F: \mathcal{E}_n(u; \D^{(n)}) \leq \epsilon_n^2 \} = \{u \in \F: \|-\div( A \nabla (u -u^*)) + V(u-u^*)\|_{n, 1}^2 + \lambda \|u-u^*\|_{n, 2}^2 \leq \epsilon_n^2\} \\
         \supset & \{u \in \cup_{B \geq B_{n}^*}^{2 B_n^*} \F(L, W_n^*, S_n^*, B): \|-\div( A \nabla (u -u^*)) + V(u-u^*)\|_{n, 1}^2 + \lambda \|u-u^*\|_{n, 2}^2 \leq \epsilon_n^2 \} \\
         \supset & \{u_\theta \in \cup_{B \geq B_{n}^*}^{2 B_n^*} \F(L, W_n^*, S_n^*, B): \supp(\theta) = \gamma_n^*, \|-\div( A \nabla (u -u^*)) + V(u-u^*)\|_{n, 1}^2 + \lambda \|u-u_n^*\|_{n, 2}^2 \leq \frac{\epsilon_n^2}{4} \} \\
         \supset & \{u_\theta \in \cup_{B \geq B_{n}^*}^{2 B_n^*} \F(L, W_n^*, S_n^*, B): \supp(\theta ) = \gamma_n^*, \|\theta - \theta_n^*\|_\infty \leq \frac{\epsilon_n}{C_p \sqrt{C_c^2 |\Omega| + \lambda |\partial \Omega|}} \}\\
        \eqqcolon &  \{u_\theta \in \cup_{B \geq B_{n}^*}^{2 B_n^*} \F(L, W_n^*, S_n^*, B): \supp(\theta ) = \gamma_n^*, \|\theta - \theta_n^*\|_\infty \leq \frac{\epsilon_n}{2 C_p'} \},
    \end{aligned}
\end{equation*}
where the constants $C_p$ and $C_c$ are defined in \eqref{def_cp} and \eqref{def_cc} respectively. 
Let $T \asymp (W_n^*)^2$ be total number of parameters in $\F(L, W_n^*, S_n^*, B_n^*)$. We can lower bound the prior mass as
\begin{align*}
    & \Pi (u \in \F: \mathcal{E}_n(u; \D^{(n)}) \leq \epsilon_n^2) \\
    \geq &  \Pi(u_\theta \in \cup_{B \geq B_{n}^*}^{2 B_{n}^*}\F(L, W_{n}^*, S_{n}^*, B): \supp(\theta) = \gamma_n^*, \|\theta - \theta_{n}^*\|_\infty \leq \frac{\epsilon_n}{2 C_p'} ) \\ \geq &  \Pi_\theta(W = W_n^*, \gamma = \gamma_n^*, 2B_{n}^* > B \geq B_{n}^*)
    \left(\frac{\epsilon_{n}}{4 B_{n}^* C_p'}\right)^{S_{n}^* } \\
    = & \frac{\lambda_W^{W_{n}^*}}{(e^{\lambda_W}-1) W_{n}^* !} \frac{T^{\lambda_S (T-S_{n}^*)}}{(1 + T^{\lambda_S})^T}  \left(e^{-\lambda_B B_{n}^*} - e^{-2\lambda_B B_{n}^*}\right)
    \left(\frac{\epsilon_n}{4 B_{n}^* C_p'}\right)^{S_n^* }\\
    = & \frac{\lambda_W^{W_{n}^*}}{(e^{\lambda_W}-1) W_{n}^* !} \frac{1}{T^{\lambda_S S_{n}^*}} \exp\left(T \log\left(1 - \frac{1}{1 + T^{\lambda_S}}\right)\right)\left(e^{-\lambda_B B_{n}^*} - e^{-2\lambda_B B_{n}^*}\right)\left(\frac{\epsilon_{n}}{4 B_{n}^* C_p'}\right)^{S_{n}^*} \\
    \geq & \frac{\lambda_W^{W_{n}^*}}{(e^{\lambda_W}-1) W_{n}^* !} \frac{1}{T^{\lambda_S S_{n}^*}} \exp \left(- \frac{2 T}{1 + T^{\lambda_S}}\right) \frac{e^{-\lambda_B B_{n}^*}}{2} \left(\frac{\epsilon_{n}}{4 B_{n}^* C_p'}\right)^{S_{n}^* } \\
     = & \exp \left(W_{n}^* \log \lambda_W - \lambda_W -\log (W_{n}^*!) - \lambda_S S_{n_1}^* \log T - \frac{2 T}{1 + T^{\lambda_S}} -\lambda_B B_{n}^* - \log 2 + S_{n}^* \log\left(\frac{\epsilon_{n}}{4 B_{n}^* C_p'}\right) \right) \\
     \geq & \exp  \left(W_{n}^* \log \lambda_W - W_{n}^* \log W_{n}^* - \lambda_S S_{n}^* \log T-\lambda_B B_{n}^* - \log 2 + S_{n}^* \log\left(\frac{\epsilon_{n}}{4 B_{n}^* C_p'}\right) - \lambda_W - \frac{2T}{1 + T^{\lambda_S}}  \right) \\
    \geq & \exp \left(- \xi n_1^{\frac{d}{d+2(\beta-2)}} \log n\right),
 \end{align*}
 where the first inequality follows from the fact that $\log(1 -x ) \geq -2x$ for $0 < x < \frac{1}{2}$, and the third inequality is a consequence of $x! \leq x^x$, and we choose $\lambda_S = 2$ such that $\frac{2T}{1 + T^{\lambda_S}} \leq 1$, and choose $\lambda_W = \lambda_B = 1$ so that $\xi$ can be selected as 
\begin{align*}
    \xi = \frac{d}{d + 2(\beta-2)} C_{W}^* + \frac{2d}{d+ 2(\beta-2)}\lambda_S C_{S, 1}^* + \frac{d}{d + 2(\beta-2)} \lambda_B C_{B}^* + 1. 
\end{align*}

\paragraph{Verifying \eqref{sieve_condition}} We want to verify that 
 \begin{align*}
         \Pi(\F \backslash \F_n) = o(\exp(-(\xi+2)n \epsilon_n^2)). 
     \end{align*}
First note that
\begin{align*}
    \Pi(\F \backslash \F_n) \leq \Pi_\theta(W > W_n) + \Pi_\theta(W \leq W_{n}, S > S_n) + \Pi_\theta(B > B_n). 
\end{align*}
By a Chernoff bound 
\begin{align*}
    \Pi_\theta (W > W_{n}) \leq e^{-t (W_{n} + 1)} \E[e^{t W}] \propto e^{-t (W_{n} + 1)} e^{e^t \lambda_W - 1}. 
\end{align*}
Take $t =\log W_{n}$ so that we have
\begin{align*}
    \Pi_\theta(W > W_{n}) e^{(\xi+2) n\epsilon_{n}^2 }&\lesssim \exp\left(- W_{n} \log W_{n} + \lambda_W W_{n} + (\xi+2) n\epsilon_{n}^2\right) \\ & \lesssim \exp ( (- \frac{d}{d + 2(\beta-2)}C_{W} + 1 + \xi + 2) n \epsilon_{n}^2). 
\end{align*}
Now we look at the second term
\begin{align*}
    & \Pi_\theta(W \leq W_{n}, S > S_{n}) = \sum_{w= 1}^{W_{n}} \Pi_\theta(W = w) \Pi_\theta (S > S_{n} | W=w) \\
    \leq & \sum_{w=1}^{W_{n}} \Pi_\theta(W = w) \Pi_\theta (S > S_{n} | W = W_n) \leq \Pi_\theta (S > S_{n} | W = W_{n}) \\
    = & \sum_{S>S_{n}}^T \binom{T}{S}
    \left(\frac{1}{1 + T^{\lambda_S}}\right)^S
    \left(1-\frac{1}{1 + T^{\lambda_S}}\right)^{T-S} \leq \sum_{S>S_{n}}^T \binom{T}{S} \left(\frac{1}{T^{\lambda_S}}\right)^S  \\
    \leq & \sum_{S> S_{n}}^T T^S \left(\frac{1}{T^{\lambda_S}}\right)^S \leq \sum_{S > S_{n}} T^{-(\lambda_S - 1)S } \leq 2T^{-(\lambda_S - 1)(S_{n} + 1)},
\end{align*}
where $T \asymp W_n^2$.
Therefore we have
\begin{align*}
    \Pi_\theta(W \leq W_{n}, S > S_{n}) e^{(\xi+2) n \epsilon_{n}^2} & \leq \exp(- (\lambda_S -1) (S_{n}+1) \log (T) + \log 2 + (\xi+2) n\epsilon_{n}^2) \\ & \lesssim \exp((- C_{S} \frac{d}{d + 2(\beta-2)} + \xi + 2) n \epsilon_{n}^2).
\end{align*}
Finally we deal with the third term,
\begin{align*}
    \Pi_\theta(B > B_{n}) e^{(\xi+2) n \epsilon_{n}^2} = \exp \left( -\lambda_B B_{n}  + (\xi + 2) n \epsilon_{n}^2 \right) = \exp(-( C_{B} - \xi - 2) n \epsilon_{n}^2).
\end{align*}
Then we can choose $C_{W}, C_{S}, C_{B}$ to be large enough so that the condition is satisfied.

\subsection{Proof of Proposition \ref{generalization}} \label{proof_generlization}

\begin{lemma} \label{bounding_local_rademacher1}
    Consider a Deep neural network space $\F^{(1)} = \cup_{W = 1}^{W_N} \cup_{S = 1}^{S_N} \F(L, W, S, B_N)$ on the domain $\Omega$ with $L = O(1)$, $W_N = O(N)$, $S_N = O(N)$, $B_N = O(N \log N)$, where $N \in \mathbb{N}$ is fixed to be sufficiently large. For any $\rho > 0$, consider localized sets  $\mathcal{M}_\rho$ defined by 
    \begin{align*}
        \mathcal{M}_\rho^{(1)} = \{u \in \F^{(1)}: \|u-u^* \|_{H^2(\Omega)}^2 \leq \rho\},
    \end{align*}
    and $\mathcal{S}_\rho^{(1)}$ defined by 
    \begin{align*}
        \mathcal{S}_\rho^{(1)} = \{h = |\Omega|\cdot(-\div(A \nabla u) + V u  -f )^2: u \in \mathcal{M}_\rho^{(1)}\}. 
    \end{align*}
    Further, assume the following uniform boundedness condition 
    \begin{align*}
        \max \left\{ \sup_{u \in \F^{(1)}} \|\nabla^2 u\|_{L^\infty(\Omega), \infty}, \sup_{u \in \F^{(1)}} \|\nabla u\|_{L^\infty(\Omega), \infty}, \sup_{u \in \F^{(1)}} \| u\|_{L^\infty(\Omega)}, \|f\|_{L^\infty(\Omega)}\right\} \leq C.
    \end{align*}
    Then for any $\rho \gtrsim n^{-2}$, the Rademacher complexity of $\mathcal{S}_\rho^{(1)}$
    can be upper bounded by a sub-root function
    \begin{align*}
        \phi(\rho) = O\left(\sqrt{\frac{\rho N}{n} \log(n N)}\right),
    \end{align*}
    i.e. for any $\rho \gtrsim n^{-2}$
    \begin{align*}
        \phi(4 \rho) \leq 2\phi(\rho), \quad R_{n} (\mathcal{S}_\rho^{(1)}) \leq \phi(\rho). 
    \end{align*}
    \end{lemma}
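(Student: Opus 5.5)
The plan is to bound the Rademacher complexity of $\mathcal{S}_\rho^{(1)}$ with Dudley's entropy integral, truncated at a small scale. The entropy of the localized class will come from the parameter-space covering already used to verify \eqref{entropy_condition}.

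\textbf{Step 1 (reduction to the residual class).} For $u\in\mathcal{M}^{(1)}_\rho$ write $r_u \coloneqq -\div(A\nabla u)+Vu-f = -\div(A\nabla(u-u^*))+V(u-u^*)$, so that $h=|\Omega| r_u^2$. The uniform boundedness assumption, together with the bounds on $A,\nabla A,V$, gives $\|r_u\|_{L^\infty}\le C'$. Hence $t\mapsto |\Omega|t^2$ is Lipschitz with constant $2|\Omega|C'$ on the relevant range, and Talagrand's contraction lemma yields
\[
R_n(\mathcal{S}^{(1)}_\rho)\lesssim R_n(\mathcal{R}_\rho), \qquad \mathcal{R}_\rho\coloneqq\{r_u:u\in\mathcal{M}^{(1)}_\rho\}.
\]
If the centering issue is awkward, I will instead work directly with $h$ and use $|h_1-h_2|\le 2|\Omega|C'|r_{u_1}-r_{u_2}|$.

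\textbf{Step 2 (variance and entropy).} Since $\|r_u\|_{L^2(\Omega)}\lesssim \|u-u^*\|_{H^2(\Omega)}$ (using $C_A$ and $V_{\max}$), every element of $\mathcal{R}_\rho$ has $L^2$-norm at most $c\sqrt\rho$, and every element of $\mathcal{S}^{(1)}_\rho$ has $\E h^2\lesssim C'^2\rho$. For the entropy I use the sup-norm Lipschitz bound \eqref{def_cc}: the map $\theta\mapsto r_{u_\theta}$ is $C_cC_p$-Lipschitz from $\|\cdot\|_\infty$ to $\|\cdot\|_{L^\infty}$, with $C_p=C_0W_N^{(3^L-1)/2}(B_N\vee d)^{(5\cdot3^L-1)/2}$. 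Combined with the count in \eqref{entropy_condition_label1}--\eqref{entropy_condition_label2}, this gives
\[
\log\N(\delta,\mathcal{R}_\rho,\|\cdot\|_{L^\infty})\lesssim S_N\log\Big(\frac{W_N C_p B_N}{\delta}\Big)\lesssim N\log\Big(\frac{N}{\delta}\Big),
\]
because $L=O(1)$ makes $\log C_p=O(\log N)$.

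\textbf{Step 3 (Dudley, localized).} I apply the local form of the entropy integral, using the empirical $L^2$ radius $\sqrt{\rho}$ (Bartlett--Bousquet--Mendelson, or the symmetrization plus Talagrand step that controls the empirical radius by $\rho+R_n$):
\[
R_n(\mathcal{R}_\rho)\lesssim \inf_{\alpha>0}\Big\{\alpha+\frac{1}{\sqrt n}\int_\alpha^{\sqrt\rho}\sqrt{N\log(N/\delta)}\,d\delta\Big\}.
\]
Choosing $\alpha=1/n$, which satisfies $\alpha\le\sqrt\rho$ precisely because $\rho\gtrsim n^{-2}$, gives
\[
R_n(\mathcal{R}_\rho)\lesssim \frac1n+\sqrt{\frac{\rho N\log(nN)}{n}}\lesssim\sqrt{\frac{\rho N\log(nN)}{n}},
\]
again using $\rho\gtrsim n^{-2}$ (and $N\ge1$). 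The resulting $\phi(\rho)=c\sqrt{\rho N\log(nN)/n}$ is a constant times $\sqrt\rho$, so $\phi(4\rho)=2\phi(\rho)$ and $\phi$ is sub-root.

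\textbf{Main obstacle.} The delicate point is that the empirical radius of $\mathcal{R}_\rho$ is not automatically $\lesssim\sqrt\rho$; only the population $L^2$ radius is. I would first try the standard bound
\[
\E\sup_{u\in\mathcal{M}^{(1)}_\rho}\|r_u\|_n^2\le c\rho+C'R_n(\mathcal{S}^{(1)}_\rho).
\]
Plugging this into Dudley gives an implicit inequality of the form $R\lesssim\sqrt{(\rho+R)N\log/n}$, and solving it yields $R\lesssim\sqrt{\rho N\log/n}+N\log/n$. The extra term $N\log/n$ is dominated by $\phi(\rho)$ whenever $\rho\gtrsim N\log(nN)/n$, so it does not matter for the fixed point used in Proposition~\ref{generalization}. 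The fallback is to state the lemma for that range of $\rho$, or to absorb the extra term into the constant in $\phi$ after verifying sub-rootness.
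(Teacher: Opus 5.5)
Your outline matches the paper's proof: strip off the square by a contraction inequality, cover the residual class in $L^\infty$ via the parameter-space Lipschitz bound \eqref{def_cc} and the architecture count, then apply Dudley's integral truncated at a small scale. You depart from the paper in two places, and in both your version is more careful. There is also one slip that you share with the paper.

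\textbf{Contraction.} The paper applies the vector contraction of Lemma~\ref{ledoux_talagrand} to $(u-u^*,\nabla(u-u^*),\nabla^2(u-u^*))$. It then needs every component to be $O(\sqrt\rho)$ in $L^2$, which is why it localizes by $\|u-u^*\|_{H^2(\Omega)}$. Your scalar contraction through $t\mapsto|\Omega|t^2$ (Lipschitz on $[-C',C']$ and vanishing at $0$) only ever uses $\|r_u\|_{L^2(\Omega)}^2=\E h\le c\rho$. That buys something real. Lemma~\ref{peeling_lemma}, as invoked in Lemma~\ref{bounding_generalizaiton1}, needs a bound on $R_n(\{h\in\mathcal S^{(1)}:\E h\le r\})$. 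In this non-homogeneous problem, $\E h\le r$ does not imply $\|u-u^*\|_{H^2(\Omega)}^2\lesssim r$: add to $u^*$ a solution of the homogeneous equation with nonzero boundary data. Your argument goes through verbatim with localization by $\E h$, which is the form the downstream proof actually consumes.

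\textbf{Empirical radius.} The obstacle you flag is exactly where the paper's proof is silent. It replaces the population constraint $\|\cdot\|_{L^2(\Omega)}\le\sqrt\rho$ by the empirical constraint $\|\cdot\|_n\le 2\sqrt\rho$ inside $R_n$ without justification. Your repair, via $\E\sup_u\|r_u\|_n^2\le c\rho+2R_n(\mathcal S^{(1)}_\rho)$ and the resulting implicit inequality, is correct. What it proves, however, is $R_n(\mathcal S^{(1)}_\rho)\lesssim\sqrt{\rho N\log(nN)/n}+N\log(nN)/n$. The additive term cannot be ``absorbed into the constant'' of $\sqrt{\rho N\log(nN)/n}$ when $\rho\ll N\log(nN)/n$. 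You must therefore either restrict to $\rho\gtrsim N\log(nN)/n$, or replace $\phi$ by $\tilde\phi(\rho)=c\bigl(\sqrt{\rho N\log(nN)/n}+N\log(nN)/n\bigr)$. The latter is still sub-root ($\tilde\phi(4\rho)\le2\tilde\phi(\rho)$ and $\tilde\phi(\rho)/\sqrt\rho$ nonincreasing), and its fixed point is $\asymp N\log(nN)/n\asymp\epsilon_n^2$, which is all Proposition~\ref{generalization} uses. Neither your argument nor the paper's establishes the literal claim on the range $n^{-2}\lesssim\rho\ll N\log(nN)/n$.

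\textbf{Truncation.} In Step~3, the bound $1/n\lesssim\sqrt{\rho N\log(nN)/n}$ does not follow from $\rho\gtrsim n^{-2}$. It needs $\rho\gtrsim 1/(nN\log(nN))$, which is strictly stronger whenever $N\log(nN)\ll n$, as in the application; the paper's final display has the same gap. Truncate at $\alpha=\sqrt{\rho/n}$ instead. Then $\alpha\le\sqrt\rho$ and $\alpha\le\sqrt{\rho N\log(nN)/n}$ hold automatically, and $\rho\gtrsim n^{-2}$ is exactly what keeps $\log(N/\alpha)\lesssim\log(nN)$ in the entropy integral. Once you pass to $\tilde\phi$ the slip is harmless, since $1/n\le N\log(nN)/n$, but the step is wrong as written.
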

    \begin{proof}
        The proof essentially follows from \citep[Lemma A.27]{lu2021machine}. First we want to show that $h$ is $C_L$-Lipschitz with respect to $u-u^*$, $\partial_j u - \partial_j u^*$, and $\partial_{jk} u - \partial_{jk} u^*$ for some constant $C_L$. 
        Note that for $h_1 = |\Omega| \cdot (-\div(A \nabla u_1) + V u_1  -f )^2, h_2=|\Omega| \cdot (-\div(A \nabla u_2) + V u_2  -f )^2$ with $u_1, u_2 \in \mathcal{M}_\rho^{(1)}$, we have 
        \begin{align*}
             |h_1(x) - h_2 (x)| & \leq  |\Omega| |-\div(A \nabla (u_1 + u_2)) + V (u_1 + u_2)  - 2f | |-\div(A \nabla (u_1 - u_2)) + V (u_1 - u_2) |   \\
            & \leq |\Omega| \left|-A \cdot \nabla^2 (u_1 + u_2) - \nabla A \cdot \nabla (u_1 + u_2) + V(u_1 + u_2) - 2f\right| \\
            & \cdot  \left|-A \cdot \nabla^2 (u_1 - u_2) - \nabla A \cdot \nabla (u_1 - u_2) + V(u_1 - u_2) \right|\\
            & \leq |\Omega| \cdot \left(2 d^2 C_A C + 2 d C_A C + 2 V_{\text{max}} C\right) \\
            & \cdot \Big\{ C_A  \sum_{j,k=1}^d |(\partial_{jk} u_1(x) - \partial_{jk} u^*(x)) -(\partial_{jk} u_2(x) - \partial_{jk} u^*(x))|   \\
            & + C_A \sum_{j=1}^d | (\partial_j u_1(x) - \partial_j u^*(x)) - (\partial_j u_2(x) - \partial_j u^*(x))| \\ &  + V_{\text{max}} |( u_1(x) -  u^*(x))  - ( u_2(x) -  u^*(x)) | \Big\}.
        \end{align*}
        We can take $C_L = |\Omega| \cdot \left(2 d^2 C_A C + 2 d C_A C + 2 V_{\text{max}} C\right) \cdot \max\{C_A, V_{\text{max}}\}.$

        Denote as the empirical $L^2$-distance $\|u-u^*\|_n = \frac{|\Omega|}{n}\sum_{i=1}^n (u(X_i) - u^*(X_i))^2$. 
        By the uniform boundedness assumption, Lemma \ref{ledoux_talagrand} and Dudley's integral theorem, we can estimate the covering number of $\mathcal{S}_\rho^{(1)}$ as follows
        \begin{align*}
            & R_{n}(\mathcal{S}_\rho^{(1)}) \\  = & \E \left[\sup_{u \in \mathcal{M}_\rho^{(1)}} \frac{1}{n} \sum_{i=1}^{n} \sigma_i |\Omega|(-\div(A \nabla u)(X_i) + V(X_i) u(X_i)  -f(X_i) )^2\right] \\
             \leq & \sqrt{2(d^2+d+1)} C_L \Bigg\{\E \left[\sup_{u \in \mathcal{M}_\rho^{(1)}} \frac{1}{n} \sum_{i=1}^{n} \sigma_i (u(X_i) - u^*(X_i)) \right]  \\
            & +  \sum_{j=1}^d \E \left[\sup_{u \in \mathcal{M}_\rho^{(1)}} \frac{1}{n} \sum_{i=1}^{n} \sigma_i (\partial_j u(X_i) - \partial_j u^*(X_i)) \right] + \sum_{j,k=1}^d  \E \left[\sup_{u \in \mathcal{M}_\rho^{(1)}} \frac{1}{n} \sum_{i=1}^{n} \sigma_i (\partial_{jk} u(X_i) - \partial_{jk} u^*(X_i)) \right] \Bigg\}\\
             \lesssim & R_{n} \left( \left\{ (u -u^*): u \in \mathcal{M}_\rho^{(1)}\right \}\right) + \sum_{j=1}^d R_{n} \left( \left\{ \partial_j (u -u^*): u \in \mathcal{M}_\rho^{(1)}\right \}\right)  + \sum_{j,k=1}^d R_{n} \left( \left\{ \partial_{jk} (u -u^*): u \in \mathcal{M}_\rho^{(1)}\right \}\right) \\
             \leq & R_{n} \left( \left\{ (u -u^*): u \in \F^{(1)}, \|u-u^*\|_{L^2(\Omega) }  \leq \sqrt{\rho}\right \}\right) + \sum_{j=1}^d R_n\left(\left\{\partial_j(u-u^*): u \in \F^{(1)}, \|\partial_j(u-u^*)\|_{L^2(\Omega)} \leq \sqrt{\rho} \right\}\right)\\
             & + \sum_{j,k=1}^d R_{n} \left( \left\{ \partial_{jk} (u -u^*): u \in \F^{(1)}, \|\partial_{jk} (u -u^*)\|_{L^2(\Omega)} \leq \sqrt{\rho} \right \}\right) \\
             \leq & R_{n} \left( \left\{ (u -u^*): u \in \F^{(1)}, \|u-u^*\|_{n}  \leq 2\sqrt{\rho}\right \}\right) + \sum_{j=1}^d R_n\left(\left\{\partial_j(u-u^*): u \in \F^{(1)}, \|\partial_j(u-u^*)\|_{n} \leq 2\sqrt{\rho} \right\}\right)\\
             & + \sum_{j,k=1}^d R_{n} \left( \left\{ \partial_{jk} (u -u^*): u \in \F^{(1)}, \|\partial_{jk} (u -u^*)\|_{n} \leq 2\sqrt{\rho} \right \}\right) \\
             \lesssim &  \inf_{0 < \delta < \sqrt{\rho}} 4 \delta + \frac{12}{\sqrt{n_1}} \int_\delta^{\sqrt{\rho}} \sqrt{\log \N(\epsilon,  \F^{(1)}, \|\cdot\|_{L^\infty(\Omega)})} d\epsilon  + \inf_{0 < \delta < \sqrt{\rho}} 4 \delta + \frac{12}{\sqrt{n_1}} \int_\delta^{\sqrt{\rho}} \sqrt{\log \N(\epsilon, \nabla \F^{(1)}, \|\cdot\|_{L^\infty(\Omega), \infty})} d\epsilon \\
            & + \inf_{0 < \delta < \sqrt{\rho}} 4 \delta + \frac{12}{\sqrt{n_1}} \int_\delta^{\sqrt{\rho}} \sqrt{\log \N(\epsilon, \nabla^2 \F^{(1)}, \|\cdot\|_{L^\infty(\Omega), \infty})} d\epsilon.
        \end{align*}
        First note that by the construction $\F^{(1)} = \cup_{W = 1}^{W_N} \cup_{S = 1}^{S_N} \F^{(1)}(L, W, S, B_N)$, we can upper bound the covering number by
        \begin{align*}
            \N(\epsilon,  \F^{(1)} , \|\cdot\|_{L^\infty(\Omega)}) \leq W_N S_N \N(\epsilon,  \F^{(1)}(L, W_N, S_N, B_N), \|\cdot\|_{L^\infty(\Omega)}).
        \end{align*}
        Also recall \eqref{def_cp} that for any $\theta_1, \theta_2 \in \Theta(L, W, S, B)$, it holds that
\begin{equation*}
    \begin{aligned}
    & \sup_{x \in \Omega} |\clip \circ f_{\theta_1}(x) - \clip \circ f_{\theta_2}(x)| \vee \sup_{x \in \Omega} \|\nabla (\clip \circ f_{\theta_1})(x) - \nabla (\clip \circ f_{\theta_2})(x)\|_\infty  \\ & \vee \sup_{x \in \Omega}\|\nabla^2 (\clip \circ f_{\theta_1})(x) - \nabla^2 (\clip \circ f_{\theta_2})(x) \|_{\infty} \\  & \leq C_0  W^{\frac{3^{L}-1}{2}} (B \vee d)^{\frac{5 \cdot 3^{L}-1}{2}} \cdot \|\theta_1 - \theta_2\|_\infty \eqqcolon C_p \cdot \|\theta_1 - \theta_2\|_\infty
\end{aligned}
\end{equation*}
         To compute the covering number, fix an architecture with $S$ nonzero parameters, and the covering number of this specific architecture with resepect to $\|\cdot\|_{L^\infty(\Omega)}$can be upper bounded by $\left(\frac{\epsilon}{C_p B} \right)^{-S}$. Also, the number of all possible architectures can be upper bounded by $\binom{2L W^2}{S} \leq (2 LW^2)^S$. Hence, we can estimate the covering number 
        \begin{align*}
            \log \N(\epsilon, \F^{(1)}, \|\cdot\|_{L^\infty(\Omega)}) & \leq  \log \left[W_N S_N (2 L W_N^2)^{S_N}  \left(\frac{\epsilon}{C_p B_N}\right)^{-S_N}\right] \\
            & \leq S_N \log(2 L W_N^2) + S_N \log (\epsilon^{-1}) + S_N \log ( C_0  W^{\frac{3^{L}-1}{2}} (B \vee d)^{\frac{5 \cdot 3^{L}-1}{2}} B_N) \\
            & + \log W_N + \log S_N \\
            & \lesssim N \left[\log (\epsilon^{-1}) + \log N \right],
        \end{align*}
        where the last step follows from that $L = O(1)$, $W_N = O(N)$, $S_N = O(N)$, $B_N = O(N \log N)$. Similarly, we have
        \begin{align*}
            \log \N(\epsilon, \nabla \F^{(1)}, \|\cdot\|_{L^\infty(\Omega), \infty}) \lesssim N \left[\log (\epsilon^{-1}) + \log N \right], 
        \end{align*}
        and 
        \begin{align*}
            \log \N(\epsilon, \nabla^2 \F^{(1)}, \|\cdot\|_{L^\infty(\Omega),\infty}) \lesssim N \left[\log (\epsilon^{-1}) + \log N \right].
        \end{align*}
        Now we can estimate the $R_{n}(\mathcal{S}_\rho^{(1)})$ by taking $\delta = \frac{1}{n} \lesssim \sqrt{\rho}$ so that
        \begin{align*}
            R_{n}(\mathcal{S}_\rho^{(1)}) & \lesssim  \frac{1}{n} + \frac{1}{\sqrt{n}} \int_{\frac{1}{n}}^{\sqrt{\rho}} 
            \sqrt{N [\log(\epsilon^{-1}) + \log N]} d\epsilon \\ & \lesssim \frac{1}{n} + \frac{1}{\sqrt{n}} \sqrt{\rho} \sqrt{N [\log n + \log N]} \\
            & \lesssim \sqrt{\frac{\rho N}{n} \log(n N)}.
        \end{align*}
    \end{proof}

\begin{lemma} \label{bounding_generalizaiton1}
   Consider a Deep neural network space $\F^{(1)}$ on the domain $\Omega$ . For any $r > 0$ consider a localized set $\mathcal{M}_r^{(1)}$ defined by 
    \begin{align*}
        \mathcal{M}_r^{(1)} = \{u \in \F^{(1)}: \|u-u^*\|_{H^2(\Omega)}^2 \leq r\},
    \end{align*}
    and assume the following uniform boundedness condition 
    \begin{align*}
        \max \left\{ \sup_{u \in \F^{(1)}} \|\nabla^2 u\|_{L^\infty(\Omega), \infty}, \sup_{u \in \F^{(1)}} \|\nabla u\|_{L^\infty(\Omega), \infty}, \sup_{u \in \F^{(1)}} \| u\|_{L^\infty(\Omega)}, \|f\|_{L^\infty(\Omega)}\right\} \leq C.
    \end{align*}
    Furthermore, assume that the Rademacher complexity of a localized set 
    \begin{align*}
        \mathcal{S}_r^{(1)} = \{h = |\Omega|\cdot(-\div(A \nabla u) + V u  -f )^2: u \in \mathcal{M}_r^{(1)}\}
    \end{align*}
    can be upper bounded by a sub-root function $\phi(r)$. Let $\{X_i\}_{i=1}^n$ be a sequence of independent uniformly distributed random variables over $\Omega$. Then we have 
    \begin{align*}
        \sup_{u \in \F^{(1)}} \left[ \|-\div(A \nabla (u-u^*)) + V(u-u^*) \|_{L^2(\Omega)}^2 - \|-\div(A \nabla (u-u^*)) + V(u-u^*) \|_{n, 1}^2  \right]\lesssim   \max\{r^*, \frac{t}{n}, \frac{t^2}{n^2}\}
    \end{align*}
    with probability at least $1 - e^{-t}$, where $r^*$ is the fixed point of $\phi$. 
\end{lemma}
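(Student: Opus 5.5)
The plan is to apply the standard local Rademacher complexity machinery of Bartlett, Bousquet and Mendelson (2005), in the form used in \citep[Lemma A.27--A.28]{lu2021machine}. It is applied to the class of nonnegative functions
\[
\mathcal{H}=\{h_u=|\Omega|\,(-\div(A\nabla(u-u^*))+V(u-u^*))^2:\ u\in\F^{(1)}\}.
\]
Since $-\div(A\nabla u^*)+Vu^*=f$, we have $h_u=|\Omega|(-\div(A\nabla u)+Vu-f)^2$. Moreover $P h_u=\|-\div(A\nabla(u-u^*))+V(u-u^*)\|_{L^2(\Omega)}^2$, because $X_i$ is uniform with density $1/|\Omega|$, and $P_n h_u=\|\cdot\|_{n,1}^2$. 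So the quantity to control is $\sup_{u}(P-P_n)h_u$.

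\textbf{Step 1: boundedness and variance.} By the uniform boundedness assumption together with $\|A\|_\infty,\|\nabla A\|_\infty\le C_A$ and $V\le V_{\max}$, every $h\in\mathcal{H}$ satisfies $0\le h\le b$ for a constant $b$ depending on $C,C_A,V_{\max},d,|\Omega|$. Hence
\[
\mathrm{Var}(h)\le P h^2\le b\,Ph .
\]
This is the Bernstein-type condition $T(h)=Ph^2\le b\,Ph$ required by the local Rademacher theorem.

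\textbf{Step 2: connecting to the localization by $\|u-u^*\|_{H^2}$.} The expansion of $\div(A\nabla\cdot)$ gives
\[
Ph_u\le |\Omega|\,(C_A d^2+C_Ad+V_{\max})^2\,\|u-u^*\|_{H^2(\Omega)}^2 ,
\]
so $\{h\in\mathcal{H}: Ph^2\le r\}$ is contained, up to constants, in a set controlled by the Rademacher complexity of $\mathcal{S}^{(1)}_{r'}$. I expect this step to be the main obstacle: the localization in the theorem is by $T(h)\le r$, while the hypothesis localizes by $\|u-u^*\|_{H^2}^2\le r$, and the reverse inequality (bounding $\|u-u^*\|_{H^2}$ by $Ph_u$) fails in general without the boundary term. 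To handle it, I would bypass the issue by noting that the proof of Lemma \ref{bounding_local_rademacher1} only uses the $L^2$ size of $u-u^*$ and of its derivatives through the Dudley integral. I would therefore redefine the localization as $\{u: Ph_u\le r\}$ and check that the same sub-root bound $\phi$ still holds. Concretely, Talagrand contraction through the Lipschitz map $z\mapsto z^2$ on $[-\sqrt b,\sqrt b]$ reduces the problem to the class $\{-\div(A\nabla(u-u^*))+V(u-u^*)\}$ with $L^2$ radius $\sqrt r$, which is covered by the same parameter-covering entropy bound. So I would take as given that $\mathbb{E}R_n\{h\in\mathcal{H}:Ph^2\le r\}\le\phi(r)$ with $\phi$ sub-root.

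\textbf{Step 3: apply the local Rademacher theorem.} Bartlett--Bousquet--Mendelson Theorem 3.3 (with $K=2$, say) gives, with probability at least $1-e^{-t}$, for all $h\in\mathcal{H}$,
\[
Ph\le 2P_nh + c_1 r^* + \frac{c_2\, t}{n}.
\]
If one insists on the $H^2$ localization literally, I would instead use a peeling and ratio-type argument over dyadic shells of $\|u-u^*\|_{H^2}^2$, together with Bousquet's version of Talagrand's inequality on each shell. This produces the same bound, with the $t^2/n^2$ term absorbing a crude union bound.

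\textbf{Step 4: conclude.} Using the a priori boundedness $P_n h\le b$, rearrange Step 3 to obtain
\[
Ph-P_nh\lesssim r^*+\frac{t}{n}+\frac{t^2}{n^2}.
\]
Taking the supremum over $u\in\F^{(1)}$ and absorbing constants gives the stated bound $\max\{r^*,t/n,t^2/n^2\}$ up to constants, with probability at least $1-e^{-t}$.
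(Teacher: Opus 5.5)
Your Steps 1 and 3 are the paper's argument in packaged form. The paper builds the normalized class $\tilde h=(\E h-h)/(\E h+r)$ and bounds its mean by $8\phi(r)/r$ via symmetrization and the peeling lemma. It then applies Talagrand's inequality and chooses $r_0\asymp\max\{r^*,t/n,t^2/n^2\}$ so that every term is at most $1/8$. Bartlett--Bousquet--Mendelson's Theorem 3.3 is exactly this computation.

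\textbf{The gap is in Step 4.} What Step 3 (and the paper's argument) delivers is the ratio-type inequality $Ph\le 2P_nh+C\max\{r^*,t/n,t^2/n^2\}$, i.e.\ $Ph-P_nh\le P_nh+C\max\{\cdots\}$. Using $P_nh\le b$ then only bounds the right-hand side by the fixed constant $b$, not by anything of order $r^*$.

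No rearrangement can repair this. At confidence $1-e^{-t}$, the deviation $Ph-P_nh$ of even a single fixed $h$ with $\mathrm{Var}(h)\asymp 1$ is of order $\sqrt{t/n}$, since Bernstein is sharp in this moderate-deviation range. With $t=n\epsilon_n^2$ this is $\epsilon_n\gg\epsilon_n^2\asymp r^*$, and the sieve contains such $h$ for large $n$. The variance term $\sqrt{\sigma^2t/n}$ can be absorbed only in the ratio form, where $\sigma^2\le b\,Ph$ and $\sqrt{b\,Ph\,t/n}\le\frac12 Ph+bt/(2n)$.

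So you should state and prove the multiplicative conclusion. The paper's own proof also ends at $\E h\le\frac2n\sum_i h(X_i)+r_0$, and that is all the proof of Theorem~\ref{upper_bound} needs. There it is applied only to $u$ with $\|-\div(A\nabla(u-u^*))+V(u-u^*)\|_{n,1}\le JM\epsilon_n$, giving an $L^2(\Omega)$-bound of $2J^2M^2\epsilon_n^2+C\epsilon_n^2$.

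\textbf{Three smaller points.}

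First, you are right that the hypothesis localizes by $\|u-u^*\|_{H^2(\Omega)}^2$ while the ratio argument needs localization by $Ph$. The paper passes over this when it feeds $\mathcal S_r^{(1)}$ into the peeling lemma, whose hypothesis is the $\E h\le r$ localization. Re-deriving the sub-root bound for $\{Ph\lesssim r\}$ by contraction and Dudley is the right repair, although it replaces the lemma's hypothesis.

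Second, with $T(h)=Ph^2$ the class $\{Ph^2\le r\}$ only gives $\|-\div(A\nabla w)+Vw\|_{L^2(\Omega)}^2\le\sqrt{Ph^2}\le\sqrt r$ for $w=u-u^*$ (by Cauchy--Schwarz). That is radius $r^{1/4}$ rather than $\sqrt r$, which worsens the fixed point. Take $T(h)=b\,Ph$ instead: it satisfies $\mathrm{Var}(h)\le T(h)\le b\,Ph$ and localizes to $\{Ph\le r/b\}$.

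Third, the fallback of peeling over dyadic $H^2$-shells does not produce the same bound. On the shell with $\|u-u^*\|_{H^2(\Omega)}^2\asymp\rho$ it gives $(P-P_n)h\lesssim\sqrt{\rho\,(r^*+t/n)}+t/n$. Here $\rho$ is neither small on the sieve nor controlled by $Ph$ or $P_nh$, which is precisely the missing inverse estimate you identified in Step 2.
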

\begin{proof}
    The proof essentially follows from \citep[Theorem 4.14]{lu2021machine}. 
    Define the following normalized empirical process
    \begin{align*}
        \tilde{\mathcal{S}}_r^{(1)} = \left\{\tilde{h} = \frac{\E[h] - h}{\E[h]+r} : h \in \mathcal{S}^{(1)}  \right\},
    \end{align*}
    and
    \begin{align*}
        \hat{\mathcal{S}}_r^{(1)} = \left\{\hat{h} = \frac{ h}{\E[h]+r} : h \in \mathcal{S}^{(1)}  \right\},
    \end{align*}
    where 
    \begin{align*}
        \mathcal{S}^{(1)} = \{h = |\Omega| \cdot (-\div(A \nabla u) + V u - f)^2: u \in \F^{(1)}\}. 
    \end{align*}
    Then by Lemma \ref{symmetrization}, 
    \begin{align*}
         \sup_{\tilde{h} \in \tilde{\mathcal{S}_r}^{(1)}} \E \left[\frac{1}{n} \sum_{i=1}^{n} \tilde{h}(X_i)\right] \leq \E \left[ \sup_{\tilde{h} \in \tilde{\mathcal{S}_r}^{(1)}} \frac{1}{n} \sum_{i=1}^{n} \tilde{h}(X_i)\right] \leq \E \left[ \sup_{h \in \mathcal{S}^{(1)}} \left|\frac{1}{n} \sum_{i=1}^{n} \frac{h(X_i)- \E[h]}{\E[h] +r} \right|\right] \leq 2 R_{n} (\hat{\mathcal{S}}_{r}^{(1)}). 
    \end{align*}
    Moreover, by Lemma \ref{peeling_lemma}
    \begin{align*}
        R_{n} (\hat{\mathcal{S}}_{r}^{(1)}) = \E \left[\sup_{h \in \mathcal{S}^{(1)}} \frac{1}{n}\sum_{i=1}^{n} \sigma_i \frac{h(X_i)}{\E[h] + r}\right] \leq \frac{4 \phi(r)}{r},
    \end{align*}
    and consequently
    \begin{align*}
        \sup_{\tilde{h} \in \tilde{\mathcal{S}_r}^{(1)}} \E \left[\frac{1}{n} \sum_{i=1}^{n} \tilde{h}(X_i)\right] \leq \frac{8 \phi(r)}{r}. 
    \end{align*}
    Next we want to apply Lemma \ref{talagrand's} to $ \tilde{h} \in \tilde{S}_r^{(1)}$. First we check  that  $\tilde{h}$ is uniformly bounded. Indeed 
    \begin{align*}
        \|\tilde{h}\|_\infty = \frac{\|\E[h] - h\|_\infty}{|\E[h] + r|} \leq \frac{2 \|h\|_\infty}{r} = \frac{2|\Omega| \cdot \|-\div(A \nabla u) + V u - f\|_\infty^2}{r} \leq \frac{C_1}{r}, 
    \end{align*}
    where we can take $C_1 = 2 |\Omega| C^2 (d^2 C_A  + d C_A  + 1)^2$. 
    Next from definition it is evident that 
    \begin{align*}
        \E[\tilde{h}] = \E \left[\frac{\E [h] - h}{\E[h] + r}\right] = 0.
    \end{align*}
    Finally, we need to check that the second moment of $\tilde{h}$ is uniformly bounded. To see this, 
    \begin{align*}
        \E [\tilde{h}^2 ] = \E \left[\frac{|\E [h] - h|^2}{|\E[h] + r|^2}\right] \leq \frac{ \|h\|_\infty \E[h]}{2 \E[h] r} \leq \frac{C_1}{r} \eqqcolon \sigma^2,
    \end{align*}
    Let $U = \max\{\sigma, \frac{2 C |\Omega|}{r}\} = \max\{\sqrt{\frac{C_1}{r}}, \frac{C_1}{r}\} \geq \sigma$. Hence the conditions of Lemma \ref{talagrand's} are satisfied. 
    Take $\{X_i\}_{i=1}^{n}$ as independent uniform random variables on $\Omega$, and we have with probability at least $1 - e^{-t}$, 
    \begin{align*}
        \sup_{\tilde{h} \in \tilde{S}_r^{(1)}} \frac{1}{n} \sum_{i=1}^{n} \tilde{h}(X_i) & \leq 2 \E \left[\sup_{\tilde{h} \in \tilde{S}_r^{(1)}} \frac{1}{n} \sum_{i=1}^{n} \tilde{h}(X_i) \right] + \frac{4 U t}{3 n} + \sqrt{\frac{2\sigma^2 t}{n}} \\
        & \leq \frac{16 \phi(r)}{r} + \max\{\sqrt{\frac{16 C_1 t^2 }{9 r n^2}}, \frac{4C_1 t}{3r n}\} + \sqrt{\frac{2 C_1 t}{r n}}.
    \end{align*}
    Take $r_0 = \max\{2^{14} r^*, \frac{1024 C_1  t^2}{9 n^2}, \frac{128 C_1 t}{n}\}$, where $r^*$ is the fixed point of $\phi$. Since $\phi$ is concave, we have $\phi(r) \leq r$ if $r \geq r^*$. Together with the fact that $\phi(4r) \leq 2 \phi(r)$, we have
    \begin{align*}
         \frac{16 \phi(r_0)}{r_0} \leq \frac{2^{11} \phi(\frac{r_0}{2^{14}})}{2^{14} \frac{r_0}{2^{14}}} \leq \frac{1}{8}. 
    \end{align*}
    Also, by the definition of $r_0$
    \begin{align*}
        \sqrt{\frac{16 C_1 t^2 }{9 r_0 n^2}} \vee \frac{4 C_1 t}{3 r_0 n} \vee \sqrt{\frac{2 C_1  t}{r_0 n}} \leq \frac{1}{8},
    \end{align*}
    Consequently, we have for any $\tilde{h} \in \tilde{S}_r^{(1)}$
    \begin{align*}
          \frac{1}{n} \sum_{i=1}^{n} \frac{\E[h] - h(X_i)}{\E[h] + r_0}  =\frac{1}{n} \sum_{i=1}^{n} \tilde{h}(X_i) \leq \frac{1}{8} + \frac{1}{8} + \frac{1}{8} \leq \frac{1}{2},
    \end{align*}
    or equivalently, for any $h \in \mathcal{S}^{(1)}$,
    \begin{gather*}
          \E[h] \leq  \frac{2}{n}\sum_{i=1}^{n} h(X_i) + r_0, \\
          \E[h] \lesssim  \frac{1}{n}\sum_{i=1}^{n} h(X_i) + \max\{r^*, \frac{t}{n}, \frac{t^2}{n^2}\},
    \end{gather*}    
    with probability at least $1 - e^{-t}$. 
\end{proof}

\begin{lemma} \label{bounding_local_rademacher2}
    Consider a Deep neural network space $\F^{(2)} = \cup_{W = 1}^{W_N} \cup_{S = 1}^{S_N} \F(L, W, S, B_N)$ on $\partial\Omega$ with $L = O(1)$, $W_N = O(N)$, $S_N = O(N)$, $B_N = O(N \log N)$, where $N \in \mathbb{N}$ is fixed to be sufficiently large. For any $\rho > 0$, consider a localized set $\mathcal{M}_\rho^{(2)}$ defined by 
    \begin{align*}
        \mathcal{M}_\rho^{(2)} = \{u \in \F^{(2)}: \|u-u^*\|_{L^\infty(\partial\Omega)}^2 \leq \rho\},
    \end{align*}
    and a localized set 
    $\mathcal{S}_\rho^{(2)}$ defined by
    \begin{align*}
        \mathcal{S}_\rho^{(2)} = \{h = |\partial \Omega|\cdot(u-u^*)^2: u \in \mathcal{M}_\rho^{(2)}\}.
    \end{align*}
    Further assume the following uniform boundedness condition 
    \begin{align*}
        \max \{ \sup_{u \in \F^{(2)}} \| u \|_{L^\infty(\partial \Omega)}, \|u^*\|_{L^\infty(\partial\Omega)}\} \leq C.
    \end{align*}
    Then for any $\rho \gtrsim n^{-2}$, the Rademacher complexity of $\mathcal{S}_\rho^{(2)}$
    can be upper bounded by a sub-root function
    \begin{align*}
        \phi(\rho) = O\left(\sqrt{\frac{\rho N}{n} \log(n N)}\right),
    \end{align*}
    i.e. for any $\rho \gtrsim n^{-2}$
    \begin{align*}
        \phi(4 \rho) \leq 2\phi(\rho), \quad R_{n} (\mathcal{S}_\rho^{(2)}) \leq \phi(\rho). 
    \end{align*}
    \end{lemma}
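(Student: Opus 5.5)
The argument mirrors the proof of Lemma \ref{bounding_local_rademacher1}, and is simpler because only zeroth-order function values on $\partial\Omega$ appear. We write $h_u = |\partial\Omega|\,(u-u^*)^2$.

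\textbf{Step 1 (Lipschitz reduction).} For $u_1,u_2\in\mathcal{M}_\rho^{(2)}$ and every $y\in\partial\Omega$,
\begin{align*}
|h_{u_1}(y)-h_{u_2}(y)| &= |\partial\Omega|\,|(u_1-u^*)+(u_2-u^*)|\,|(u_1-u^*)-(u_2-u^*)| \\
&\le 4C|\partial\Omega|\,|(u_1-u^*)(y)-(u_2-u^*)(y)|,
\end{align*}
using the uniform boundedness assumption. Thus $h_u$ is a $C_L$-Lipschitz function of $u-u^*$ with $C_L=4C|\partial\Omega|$. The Ledoux--Talagrand contraction (Lemma \ref{ledoux_talagrand}) then gives
\begin{align*}
R_n(\mathcal{S}_\rho^{(2)}) \lesssim R_n\bigl(\{u-u^*: u\in\mathcal{M}_\rho^{(2)}\}\bigr).
\end{align*}

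\textbf{Step 2 (localization).} On $\mathcal{M}_\rho^{(2)}$ we have $\|u-u^*\|_{L^\infty(\partial\Omega)}\le\sqrt\rho$. Hence the empirical $L^2$ radius on the sample $\{Y_j\}$ is at most $\sqrt{|\partial\Omega|\rho}\lesssim\sqrt\rho$ deterministically. This is simpler than the interior case, where one must pass from population to empirical radius; here no such step is needed.

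\textbf{Step 3 (Dudley's entropy integral).} Dudley's integral with cutoff $\delta=1/n\lesssim\sqrt\rho$ (valid since $\rho\gtrsim n^{-2}$) gives
\begin{align*}
R_n(\mathcal{S}_\rho^{(2)}) \lesssim \frac1n+\frac{1}{\sqrt n}\int_{1/n}^{\sqrt\rho}\sqrt{\log\N(\epsilon,\F^{(2)},\|\cdot\|_{L^\infty(\partial\Omega)})}\,d\epsilon .
\end{align*}
The covering number is bounded exactly as before. Using the parameter Lipschitz bound \eqref{def_cp}, which holds in sup norm over $\bar\Omega\supset\partial\Omega$, together with the count $\binom{2LW_N^2}{S_N}\le(2LW_N^2)^{S_N}$ of architectures, we get
\begin{align*}
\log\N\lesssim N[\log\epsilon^{-1}+\log N],
\end{align*}
because $L=O(1)$, $W_N,S_N=O(N)$ and $B_N=O(N\log N)$. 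Since $\log\epsilon^{-1}\le\log n$ on $[1/n,\sqrt\rho]$, the integral is at most $\sqrt\rho\sqrt{N\log(nN)}$, and therefore
\begin{align*}
R_n(\mathcal{S}_\rho^{(2)})\lesssim \frac1n+\sqrt{\frac{\rho N}{n}\log(nN)}\lesssim\sqrt{\frac{\rho N}{n}\log(nN)},
\end{align*}
where the $1/n$ term is absorbed using $\rho\gtrsim n^{-2}$ and $N\ge1$.

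\textbf{Step 4 (sub-root property).} Set $\phi(\rho)=c\sqrt{\rho N\log(nN)/n}$. Then $\phi$ is nondecreasing, $\phi(\rho)/\sqrt\rho$ is constant, and $\phi(4\rho)=2\phi(\rho)$, so $\phi$ is sub-root.

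\textbf{Main obstacle.} The only delicate point is the low-$\epsilon$ end of the Dudley integral: the $\log\epsilon^{-1}$ term must be kept at size $\log n$. This is why the cutoff is $\delta=1/n$ and why the hypothesis $\rho\gtrsim n^{-2}$ is required.
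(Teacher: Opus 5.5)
Your outline follows the paper's route: the paper proves this lemma only by saying it is ``similar to Lemma~\ref{bounding_local_rademacher1}'', meaning Lipschitz contraction, localization, Dudley's integral with cutoff $\delta=1/n$, and the parameter-Lipschitz covering bound. Steps 1, 2 and 4 are fine, and you are right that the $L^\infty(\partial\Omega)$ localization makes the empirical radius deterministic.

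\textbf{The gap.} The step that fails as justified is the absorption at the end of Step 3. You claim $1/n\lesssim\sqrt{\rho N\log(nN)/n}$ from $\rho\gtrsim n^{-2}$ and $N\ge 1$. But at $\rho\asymp n^{-2}$ the right-hand side is $n^{-1}\sqrt{N\log(nN)/n}$. This is much smaller than $1/n$ whenever $N\log(nN)\ll n$, which is exactly the regime where the lemma is applied later, with $N\asymp n^{d/(d+2(\beta-2))}$. (The paper's proof of Lemma~\ref{bounding_local_rademacher1} takes the same step.) So with $\delta=1/n$ you only obtain $R_n(\mathcal{S}_\rho^{(2)})\lesssim 1/n+\sqrt{\rho N\log(nN)/n}$, which is not of the claimed form for small $\rho$.

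\textbf{Two easy repairs.} First, take $\delta=\sqrt{\rho/n}$ instead. Then $\delta\le\sqrt\rho$, and the hypothesis $\rho\gtrsim n^{-2}$ still gives $\log(1/\epsilon)\le\log(1/\delta)=\tfrac12\log(n/\rho)\lesssim\log n$ on the integration range. That, not the absorption, is what the hypothesis is really for. Moreover $4\delta=4\sqrt{\rho/n}\le 4\sqrt{\rho N\log(nN)/n}$, since $N\log(nN)\ge 1$.

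Second, in this boundary lemma you can use the trivial bound $R_n(\mathcal{S}_\rho^{(2)})\le|\partial\Omega|\rho$, which holds because $\|h\|_\infty\le|\partial\Omega|\rho$ on $\mathcal{S}_\rho^{(2)}$. This is $\lesssim\sqrt{\rho N\log(nN)/n}$ when $\rho\le N\log(nN)/n$. For larger $\rho$ you have $\sqrt{\rho N\log(nN)/n}\ge N\log(nN)/n\ge 1/n$, so the $1/n$ term is absorbed as you intended.

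With either fix your argument is complete.
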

\begin{proof}
    The proof is similar to that of Lemma \ref{bounding_local_rademacher1}. 
\end{proof}

\begin{lemma} \label{bounding_generalization2}
   Consider a Deep neural network space $\F^{(2)}$ on $\partial \Omega$. For any $r > 0$ consider a localized set $\mathcal{M}_r^{(2)}$ defined by 
    \begin{align*}
        \mathcal{M}_r^{(2)} = \{u \in \F^{(2)}: \|u-u^*\|_{L^\infty(\partial\Omega)}^2 \leq r\},
    \end{align*}
    and assume the following uniform boundedness condition 
    \begin{align*}
        \max \{ \sup_{u \in \F^{(2)}} \| u \|_{L^\infty(\partial \Omega)}, \|u^*\|_{L^\infty(\partial\Omega)}\} \leq C.
    \end{align*}
    Furthermore, assume that the Rademacher complexity of a localized set 
    \begin{align*}
        \mathcal{S}_r^{(2)} = \{h = |\partial \Omega|\cdot( u -u^* )^2: u \in \mathcal{M}_r^{(2)}\}
    \end{align*}
    can be upper bounded by a sub-root function $\phi(r)$. Let $\{Y_j\}_{j=1}^n$ be a sequence of independent uniformly distributed random variable over $\partial \Omega$.  Then we have
    \begin{align*}
        \sup_{u \in \F^{(2)}} \left[ \|u - u^*\|_{L^2(\partial \Omega)}^2 - \|u - u^*\|_{n, 2}^2  \right]\lesssim \max\{r^*, \frac{t}{n}, \frac{t^2}{n^2}\}
    \end{align*}
    with probability at least $1 - e^{-t}$, where $r^*$ is the fixed point of $\phi$.
\end{lemma}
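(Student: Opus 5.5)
The plan is to repeat the normalized-empirical-process argument of Lemma \ref{bounding_generalizaiton1}, now for the boundary class, and then read off the displayed bound; the last step is where I expect trouble. Write $h_u = |\partial\Omega|(u-u^*)^2$ for $u\in\F^{(2)}$ and $\mathcal{S}^{(2)}=\{h_u: u\in\F^{(2)}\}$. With $Y_j$ uniform on $\partial\Omega$ we have $\E[h_u(Y_j)] = \|u-u^*\|^2_{L^2(\partial\Omega)}$ and $\frac1n\sum_j h_u(Y_j)=\|u-u^*\|_{n,2}^2$. So the supremum in the statement is $\sup_{h\in\mathcal{S}^{(2)}}(\E h - P_n h)$, where $P_n h = \frac1n\sum_j h(Y_j)$. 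The uniform bound $\|u\|_{L^\infty(\partial\Omega)},\|u^*\|_{L^\infty(\partial\Omega)}\le C$ gives $0\le h_u\le 4C^2|\partial\Omega| \eqqcolon C_1/2$.

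\textbf{Step 1: ratio class.} For $r>0$ define $\tilde h = (\E h - h)/(\E h + r)$ and $\hat h = h/(\E h + r)$. By symmetrization (Lemma \ref{symmetrization}), $\E\sup \frac1n\sum_j\tilde h(Y_j)\le 2R_n(\hat{\mathcal S}^{(2)}_r)$. The peeling lemma (Lemma \ref{peeling_lemma}) bounds this by $8\phi(r)/r$, where $\phi$ is the assumed sub-root majorant of $R_n(\mathcal S^{(2)}_r)$.

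The point needing care is that the localization in $\mathcal M^{(2)}_r$ is in $L^\infty(\partial\Omega)$, whereas peeling slices by $\E h = |\partial\Omega|\,\|u-u^*\|^2_{L^2(\partial\Omega)}$. I would handle this as the paper does in Lemma \ref{bounding_local_rademacher1}: the $\phi$ produced by Lemma \ref{bounding_local_rademacher2} comes from Dudley's integral with $L^\infty$ covering numbers. That bound therefore also controls the $L^2$-sliced shells $\{h:\E h\le 4^k r\}$, and peeling can be applied to those shells.

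\textbf{Step 2: Talagrand.} Each $\tilde h$ satisfies $\E\tilde h=0$, $\|\tilde h\|_\infty\le C_1/r$, and $\E\tilde h^2\le \|h\|_\infty\E h/(\E h+r)^2\le C_1/r$. Lemma \ref{talagrand's} then gives, with probability at least $1-e^{-t}$,
\begin{equation*}
\sup_{h}\frac{\E h-P_nh}{\E h+r}\le \frac{16\phi(r)}{r}+\sqrt{\frac{2C_1t}{rn}}+\frac{4C_1t}{3rn}.
\end{equation*}
Take $r_0=\max\{2^{14}r^*,\,c\,t/n,\,c\,t^2/n^2\}$. Using $\phi(4r)\le2\phi(r)$ and $\phi(r)\le r$ for $r\ge r^*$, each term is at most $1/8$. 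Hence, uniformly over $u\in\F^{(2)}$,
\begin{equation*}
\|u-u^*\|^2_{L^2(\partial\Omega)}-\|u-u^*\|^2_{n,2}\le \tfrac12\big(\|u-u^*\|^2_{L^2(\partial\Omega)}+r_0\big).
\end{equation*}

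\textbf{Step 3 (main obstacle): reaching the additive form.} The inequality in Step 2 is of ratio type; it does not directly give the additive bound $\lesssim r_0$ in the statement. I would try first to use the $L^\infty$ localization built into $\mathcal M^{(2)}_r$. If the supremum is restricted to $u$ with $\|u-u^*\|^2_{L^\infty(\partial\Omega)}\le r$ for some $r\lesssim r_0$, then $\E h\le |\partial\Omega|\,r\lesssim r_0$. Substituting this into the right-hand side of Step 2 gives exactly $\lesssim\max\{r^*,t/n,t^2/n^2\}$, which is the stated bound.

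For $u$ far from $u^*$ this route fails. There the honest conclusion is the ratio bound $\|u-u^*\|^2_{L^2(\partial\Omega)}\le 2\|u-u^*\|^2_{n,2}+r_0$. This weaker form is what the proof of Proposition \ref{generalization} actually needs: it is applied on the posterior event where $\|u-u^*\|_{n,2}\lesssim\epsilon_n$, and there it yields $\|u-u^*\|^2_{L^2(\partial\Omega)}\lesssim \epsilon_n^2+r_0$. I would therefore state the result in this ratio form and note that the additive form holds on the localized set.
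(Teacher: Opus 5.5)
Your Steps 1--2 are the paper's own argument. The paper proves this lemma by repeating the proof of Lemma \ref{bounding_generalizaiton1}: symmetrization, peeling, Talagrand, and the choice $r_0=\max\{2^{14}r^*,\dots\}$. That proof also ends at the ratio-type bound $\E[h]\le \frac{2}{n}\sum_j h(Y_j)+r_0$. The paper then rewrites it as $\E[h]\lesssim P_nh+\max\{r^*,t/n,t^2/n^2\}$ and treats this as the displayed additive bound.

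Your Step 3 diagnosis is correct, and the displayed inequality cannot be proved as stated. Take $\F^{(2)}=\{u^*,u_1\}$ with $\|u_1-u^*\|_{L^\infty(\partial\Omega)}=1$ and $h_1=|\partial\Omega|(u_1-u^*)^2$ nonconstant on $\partial\Omega$. Then $\mathcal{S}^{(2)}_r=\{0\}$ for $r<1$, so $\phi(r)=\|h_1\|_\infty\sqrt{r/n}$ is an admissible sub-root bound with $r^*\asymp 1/n$. But by the central limit theorem, $\mathbb{P}(\E h_1-P_nh_1\le K/n)\to 1/2<1-e^{-1}$ for every fixed $K$, which contradicts the claim at $t=1$.

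A moderate-deviation version of the same computation breaks the additive bound at the level $t=n\epsilon_n^2$ used in Proposition \ref{generalization}. Apply it to a fixed element $u$ of the sieve for which $(u-u^*)^2$ has nonvanishing variance on $\partial\Omega$. The failure probability is then $\exp(-O(n\epsilon_n^4))\gg e^{-n\epsilon_n^2}$. So the correct content of the lemma is what you propose: the ratio form $\|u-u^*\|^2_{L^2(\partial\Omega)}\le 2\|u-u^*\|^2_{n,2}+r_0$, plus the additive form on $\{u:\|u-u^*\|^2_{L^2(\partial\Omega)}\lesssim r_0\}$.

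Two smaller points.

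First, the weak form is needed in the proof of Theorem \ref{upper_bound}, not the proof of Proposition \ref{generalization}. It enters at the step leading to \eqref{second_term_population}, and similarly in Corollary \ref{upper_bound_posterior_mean}. There the bound is used only on $\{\|u-u^*\|_{n,2}\le JM\epsilon_n\}$, where $\|u-u^*\|^2_{L^2(\partial\Omega)}\le 2J^2M^2\epsilon_n^2+r_0\le (2JM\epsilon_n)^2$ for $JM$ large. So Proposition \ref{generalization} itself should be restated in ratio form, and the theorem then goes through unchanged.

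Second, you are right that the hypothesis does not match the peeling lemma, a point the paper overlooks. Lemma \ref{peeling_lemma} needs $R_n(\{h:\E h\le r\})\le\phi(r)$. The stated hypothesis controls only the smaller set $\mathcal{S}^{(2)}_r\subset\{h:\E h\le|\partial\Omega|\,r\}$. Your patch works, with one caveat. The Dudley integral in Lemma \ref{bounding_local_rademacher2} is cut off at the empirical radius, and that radius is random for an $L^2(\partial\Omega)$-localized class. Use instead the standard local entropy bound for classes with $\log\mathcal{N}(\epsilon,\cdot,L^\infty)\lesssim N\log(nN/\epsilon)$. It gives $\sqrt{rN\log(nN)/n}+N\log(nN)/n$, whose fixed point is still $\asymp N\log(nN)/n$. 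It is cleaner to state the lemma's hypothesis directly for the $\E h$-localized class.
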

\begin{proof}
    The proof is similar to that of Lemma \ref{bounding_generalizaiton1}. 
\end{proof}

\begin{proof}[Proof of Proposition \ref{generalization}]
    Take $\F^{(1)}$ in Lemma \ref{bounding_local_rademacher1} to be the restriction of the sieve $\F_n$ on $\Omega$. Note that $\F^{(1)}$ fulfills the uniform boundedness assumption due to the presence of the cutoff function. 
    By the construction of the sieve, we have $N \asymp n^{\frac{d}{d + 2(\beta-2)}}$ so that 
    \begin{equation*}
        R_n(\mathcal{S}_r^{(1)}) \lesssim \sqrt{r n^{\frac{-2(\beta-2)}{d + 2(\beta-2)}} \log n} = \phi(r), 
    \end{equation*}
    and the fixed $r^*$ of $\phi$ satisfies
    \begin{equation*}
        r^* \asymp n^{\frac{-2(\beta-2)}{d + 2(\beta-2)}} \log n = \epsilon_n^2. 
    \end{equation*}
    By taking $t = n^{\frac{d}{d + 2(\beta-2)}} \log n = n \epsilon_n^2$, Lemma \ref{bounding_generalizaiton1} shows that 
    \begin{equation} \label{fist_term_generalization}
        \begin{aligned}
        \sup_{u \in \F_n} \left[ \|-\div(A \nabla (u-u^*)) + V(u-u^*) \|_{L^2(\Omega)}^2 - \|-\div(A \nabla (u-u^*)) + V(u-u^*) \|_{n, 1}^2  \right]  \lesssim   \max\{r^*, \frac{t}{n}, \frac{t^2}{n^2}\} 
        \lesssim \epsilon_n^2,
    \end{aligned}
    \end{equation}
    with probability at least $1 - e^{-n\epsilon_n^2}$. 
    
    Take $\F^{(2)}$ in Lemma \ref{bounding_local_rademacher2} to be the restriction of $\F_n$ on $\partial \Omega$. Analogously, Lemma \ref{bounding_generalization2} yields \begin{equation}\label{second_term_generalization}
        \sup_{u \in \F_n} \left[\|u - u^*\|_{L^2(\partial \Omega)}^2 - \|u - u^*\|_{n, 2}^2  \right] \lesssim \epsilon_n^2,
    \end{equation}
    with probability at least $1 - e^{-n\epsilon_n^2}$. We can conclude the proof by combining \eqref{fist_term_generalization} and \eqref{second_term_generalization}.
\end{proof}

\section{Proof of the main results}
\subsection{Proof of Theorem \ref{upper_bound}} 
\label{proof_upper_bound}
\begin{proof}[Proof of Theorem \ref{upper_bound}]
    The proof follows by combining Lemma \ref{contraction_empirical} and Proposition \ref{generalization}. Recall that in the proof of Lemma \ref{contraction_empirical}, we have \eqref{first_term} 
    \begin{equation*}
    \begin{aligned}
        & P_{u^*}^{(n)} Q_{u^*}^{(n)} \left[\Pi(u \in \F_n: \|-\div(A \nabla (u-u^*)) + V (u-u^*)\|_{n, 1} >  J M \epsilon_n | \D^{(n)} ) \right] \\ \leq &  e^{C n \epsilon_n^2} \frac{e^{-\frac{1}{32}n M^2 \epsilon_n^2}}{1 - e^{-\frac{1}{32}n M^2 \epsilon_n^2}} + e^{-\frac{1}{36} n M^2 \epsilon_{n}^2 J^2 + \xi n\epsilon_n^2 + (1+C_1) n \epsilon_n^2} + C_1^{-1} (n\epsilon_n^2)^{-1}.
    \end{aligned}
\end{equation*}
Also, take $J, M$ to be large enough so that \eqref{fist_term_generalization} implies that for any $u \in \F_n$,
\begin{equation*}
      \|-\div(A \nabla (u-u^*)) + V(u-u^*) \|_{L^2(\Omega)}^2 - \|-\div(A \nabla (u-u^*)) + V(u-u^*) \|_{n, 1}^2  \leq J^2 M^2 \epsilon_n^2,
\end{equation*}
with probability at least $1 - e^{-n\epsilon_n^2}$. 
Combining the two inequalities above yields
\begin{equation} \label{first_term_pupulation}
    \begin{aligned}
        & P_{u^*}^{(n)} Q_{u^*}^{(n)} \left[\Pi(u \in \F_n:  \|-\div(A \nabla (u-u^*)) + V(u-u^*) \|_{L^2(\Omega)} > 2 J M \epsilon_n| \D^{(n)})\right] \\
        \leq & P_{u^*}^{(n)} Q_{u^*}^{(n)} \left[\Pi(u \in \F_n: \|-\div(A \nabla (u-u^*)) + V(u-u^*) \|_{n, 1} >  J M \epsilon_n | \D^{(n)} ) \right] + e^{-n\epsilon_n^2} \\
        \leq & e^{C n \epsilon_n^2} \frac{e^{-\frac{1}{32}n M^2 \epsilon_n^2}}{1 - e^{-\frac{1}{32}n M^2 \epsilon_n^2}} + e^{-\frac{1}{36} n M^2 \epsilon_{n}^2 J^2 + \xi n\epsilon_n^2 + (1+C_1) n \epsilon_n^2} + C_1^{-1} (n\epsilon_n^2)^{-1} + e^{-n\epsilon_n^2}. 
    \end{aligned}
\end{equation}
Similarly, by leveraging \eqref{second_term} and \eqref{second_term_generalization} we have 
\begin{equation} \label{second_term_population}
    \begin{aligned}
        & P_{u^*} Q_{u^*} \left[\Pi(u \in \F_n: \|u -u^*\|_{L^2(\partial \Omega)} > 2 J M \epsilon_n | \D^{(n)}) \right] \\
        \leq & e^{C n \epsilon_n^2} \frac{e^{-\frac{1}{32}n M^2 \epsilon_n^2}}{1 - e^{-\frac{1}{32}n M^2 \epsilon_n^2}} + e^{-\frac{1}{36} n M^2 \epsilon_{n}^2 J^2 + \xi n\epsilon_n^2 + (1+C_1) n \epsilon_n^2} + C_1^{-1} (n\epsilon_n^2)^{-1} + e^{-n\epsilon_n^2}. 
    \end{aligned}
\end{equation}
We can then conclude by putting \eqref{first_term_pupulation}, \eqref{second_term_population} and \eqref{third_term} together.
\end{proof}

\subsection{Proof of Corollary \ref{upper_bound_posterior_mean}}
\begin{proof}[Proof of Corollary \ref{upper_bound_posterior_mean}]
    Note that it suffices to show that for any $M_n \to \infty$,
    \begin{align*}
        P_{u^*}^{(n)} Q_{u^*}^{(n)} \left(u \in \F: \|-\div(A \nabla \bar{u}_n ) + V \bar{u}_n  -f\|_{L^2(\Omega)}^2 + \lambda \|\bar{u}_n - g\|_{L^2(\partial \Omega)}^2 \geq M_n^2 \epsilon_n^2\right) \to 0,
    \end{align*}
    as $n \to \infty$. 
    Also by Jensen's inequality, we have
    \begin{align*}
         \|-\div(A \nabla \bar{u}_n ) + V \bar{u}_n  -f\|_{L^2(\Omega)} \leq \E_{u \sim \Pi} \left[ \|-\div(A \nabla u) + V u -f\|_{L^2(\Omega)} | \D^{(n)}\right], 
    \end{align*}
    and 
    \begin{align*}
        \|\bar{u}_n - g\|_{L^2(\partial \Omega)} \leq \E_{u \sim \Pi} \left[\|u -g\|_{L^2(\partial \Omega)} | \D^{(n)}\right]. 
    \end{align*}
    Hence to prove the statement, it is sufficient to show that for any $M_n \to \infty$
    \begin{equation} \label{posterior_mean_domain}
         P_{u^*}^{(n)} Q_{u^*}^{(n)} \left( \E_{u \sim \Pi} \left[ \|-\div(A \nabla u) + V u -f\|_{L^2(\Omega)} | \D^{(n)}\right] > M_n \epsilon_n \right) \to 0,
    \end{equation}
    and 
    \begin{equation} \label{posterior_mean_boundary}
        P_{u^*}^{(n)} Q_{u^*}^{(n)} \left(\E_{u \sim \Pi} \left[\|u -g\|_{L^2(\partial \Omega)} | \D^{(n)}\right] > M_n \epsilon_n \right) \to 0,
    \end{equation}
    as $n \to \infty$.
    
    By Cauchy-Schwarz and Jensen's inequality, 
    \begin{equation} \label{posterior_mean_upper_bound}
        \begin{aligned}
            & \E_{u \sim \Pi} \left[  \|-\div(A \nabla u) + V u -f\|_{L^2(\Omega)} | \D^{(n)} \right] \\ \leq &  2J M \epsilon_n  + \E_{u \sim \Pi}\left[  \|-\div(A \nabla u) + V u -f\|_{L^2(\Omega)} \mathbf{1}_{\{  \|-\div(A \nabla u) + V u -f\|_{L^2(\Omega)} >2 J M \epsilon_n \}} | \D^{(n)}\right] \\
              \leq & 2J M \epsilon_n + \left(\E \left[ \|-\div(A \nabla u) + V u -f\|_{L^2(\Omega)}^2 | \D^{(n)}\right] \right)^{1/2} \left(\Pi \left[ \|-\div(A \nabla u) + V u -f\|_{L^2(\Omega)} >2 J M \epsilon_n | \D^{(n)} \right]\right)^{1/2}. 
        \end{aligned}
    \end{equation}
    Let $\phi_1$ be test on $\{X_i, f_i\}_{i=1}^n$ such that \eqref{phi1} holds, and let $E$ to be the set on which \eqref{lower_bound_denominator} holds. Recall that by \eqref{conditioned_on_phi1}, \eqref{upper_bound_numerator} and \eqref{lower_bound_denominator} 
    \begin{equation*}
        \begin{aligned}
            & P_{u^*}^{(n)} Q_{u^*}^{(n)} \left[\Pi(u \in \F_n: \|-\div(A \nabla (u-u^*)) + V (u-u^*) \|_{n, 1} >  J M \epsilon_n | \D^{(n)} ) \mathbf{1}_E \right] \\
            \leq &   P_{u^*}^{(n)} Q_{u^*}^{(n)} (\phi_1) + P_{u^*}^{(n)} Q_{u^*}^{(n)} \left[\Pi(u \in \F_n: \|-\div(A \nabla (u-u^*)) + V (u-u^*)\|_{n, 1} >  J M \epsilon_n | \D^{(n)} ) (1 - \phi_1)\mathbf{1}_E \right] \\
            \leq & e^{C n \epsilon_n^2} \frac{e^{-\frac{1}{32}n M^2 \epsilon_n^2}}{1 - e^{-\frac{1}{32}n M^2 \epsilon_n^2}} + e^{-\frac{1}{36} n M^2 \epsilon_{n}^2 J^2 + \xi n\epsilon_n^2 + (1+C_1) n \epsilon_n^2}. 
        \end{aligned}
    \end{equation*}
    Then by \eqref{first_term_pupulation}, we can take $J, M$ to be large enough such that 
    \begin{equation}
    \begin{aligned}
        & P_{u^*}^{(n)} Q_{u^*}^{(n)} \left[\Pi(u \in \F_n: \|-\div(A \nabla (u-u^*)) + V (u-u^*) \|_{L^2(\Omega)} > 2 J M \epsilon_n| \D^{(n)}) \mathbf{1}_E \right] \\
        \leq & P_{u^*}^{(n)} Q_{u^*}^{(n)} \left[\Pi(u \in \F_n: \|-\div(A \nabla (u-u^*)) + V (u-u^*) \|_{n, 1} >  J M \epsilon_n | \D^{(n)} ) \mathbf{1}_E \right] + e^{-  n\epsilon_n^2 J^2} \\
        \leq & e^{C n \epsilon_n^2} \frac{e^{-\frac{1}{32}n M^2 \epsilon_n^2}}{1 - e^{-\frac{1}{32}n M^2 \epsilon_n^2}} + e^{-\frac{1}{36} n M^2 \epsilon_{n}^2 J^2 + \xi n\epsilon_n^2 + (1+C_1) n \epsilon_n^2} + e^{- n\epsilon_n^2 J^2}. 
    \end{aligned}
\end{equation}
Furthermore, by Markov's inequality 
\begin{equation} \label{markov_upper_bound}
    \begin{aligned}
        & P_{u^*}^{(n)} Q_{u^*}^{(n)} \left(\Pi(u \in \F_n: \|-\div(A \nabla u) + V u -f\|_{L^2(\Omega)} > 2 J M \epsilon_n| \D^{(n)}) \mathbf{1}_E > e^{-b n\epsilon_n^2} \right) \\
        \leq & e^{b n\epsilon_n^2} \left(e^{C n \epsilon_n^2} \frac{e^{-\frac{1}{32}n M^2 \epsilon_n^2}}{1 - e^{-\frac{1}{32}n M^2 \epsilon_n^2}} + e^{-\frac{1}{36} n M^2 \epsilon_{n}^2 J^2 + \xi n\epsilon_n^2 + (1+C_1) n \epsilon_n^2} + e^{-  n\epsilon_n^2 J^2}\right). 
    \end{aligned}
\end{equation}
Now we can estimate the second term in \eqref{posterior_mean_upper_bound} by 
\begin{equation} \label{probability_second_term}
    \begin{aligned}
        & P_{u^*}^{(n)} Q_{u^*}^{(n)} \left(\E \left[\|-\div(A \nabla u) + V u -f\|_{L^2(\Omega)}^2 | \D^{(n)}\right] \cdot \Pi \left[ \|-\div(A \nabla u) + V u -f\|_{L^2(\Omega)} > 2 J M \epsilon_n | \D^{(n)} \right] > \epsilon_n^2 \right) \\
        \leq &  P_{u^*}^{(n)} Q_{u^*}^{(n)} \left(\E \left[ \|-\div(A \nabla u) + V u -f\|_{L^2(\Omega)}^2 | \D^{(n)}\right] \cdot \Pi \left[ \|-\div(A \nabla u) + V u -f\|_{L^2(\Omega)} >2 J M \epsilon_n | \D^{(n)} \right] \mathbf{1}_E > \epsilon_n^2 \right) \\ & + P_{u^*}^{(n)} Q_{u^*}^{(n)} (\mathbf{1}_{E^c}) \\
        \leq & P_{u^*}^{(n)} Q_{u^*}^{(n)} \left(\E \left[ \|-\div(A \nabla u) + V u -f\|_{L^2(\Omega)}^2 | \D^{(n)}\right] \mathbf{1}_E  \cdot e^{-b n\epsilon_n^2 } > \epsilon_n^2  \right) \\
        & + e^{b n\epsilon_n^2} \left(e^{C n \epsilon_n^2} \frac{e^{-\frac{1}{32}n M^2 \epsilon_n^2}}{1 - e^{-\frac{1}{32}n M^2 \epsilon_n^2}} + e^{-\frac{1}{36} n M^2 \epsilon_{n}^2 J^2 + \xi n\epsilon_n^2 + (1+C_1) n \epsilon_n^2} + e^{- n\epsilon_n^2 J^2}\right) + C_1^{-1} (n \epsilon_n^2)^{-1}\\
        = &  P_{u^*}^{(n)} Q_{u^*}^{(n)} \left( \frac{\int \|-\div(A \nabla u) + V u -f\|_{L^2(\Omega)}^2 \frac{p_{u}^{(n)}}{p_{u^*}^{(n)}} \frac{q_{u}^{(n)}}{q_{u^*}^{(n)}} d \Pi(u)}{\int \frac{p_{u}^{(n)}}{p_{u^*}^{(n)}} \frac{q_{u}^{(n)}}{q_{u^*}^{(n)}} d \Pi(u)} \mathbf{1}_E > e^{b n \epsilon_n^2} \epsilon_n^2 \right) \\
        & + e^{b n\epsilon_n^2} \left(e^{C n \epsilon_n^2} \frac{e^{-\frac{1}{32}n M^2 \epsilon_n^2}}{1 - e^{-\frac{1}{32}n M^2 \epsilon_n^2}} + e^{-\frac{1}{36} n M^2 \epsilon_{n}^2 J^2 + \xi n\epsilon_n^2 + (1+C_1) n \epsilon_n^2} + e^{-  n\epsilon_n^2 J^2}\right) + C_1^{-1} (n \epsilon_n^2)^{-1} \\
        \leq & e^{\xi n\epsilon_n^2 + (1+C_1) n \epsilon_n^2} e^{-b n\epsilon_n^2} \epsilon_n^{-2} \int \|-\div(A \nabla u) + V u -f\|_{L^2(\Omega)}^2 d \Pi(u) \\
        & + e^{b n\epsilon_n^2} \left(e^{C n \epsilon_n^2} \frac{e^{-\frac{1}{32}n M^2 \epsilon_n^2}}{1 - e^{-\frac{1}{32}n M^2 \epsilon_n^2}} + e^{-\frac{1}{36} n M^2 \epsilon_{n}^2 J^2 + \xi n\epsilon_n^2 + (1+C_1) n \epsilon_n^2} + e^{-  n\epsilon_n^2 J^2}\right) + C_1^{-1} (n \epsilon_n^2)^{-1}. 
    \end{aligned}
\end{equation}
where the second inequality follows from \eqref{markov_upper_bound}, the third inequality follows from Markov's inequality, Fubini's theorem and \eqref{lower_bound_denominator}. Then we can take $C_1 = 1$, $b = \xi + 1 + C_1$ and $M, J$ to be some large constant such that  \eqref{probability_second_term} goes to $0$ as $n \to \infty$. Together with \eqref{posterior_mean_upper_bound}, we prove $\eqref{posterior_mean_domain}$. \eqref{posterior_mean_boundary} can argued in a similar way. Therefore we conclude the proof. 
\end{proof}

\subsection{Proof of Theorem \ref{lower_bound}} \label{proof_lower_bound}

\begin{proof}[Proof of Theorem \ref{lower_bound}]
    Note that by the definition of the PINN-loss $\mathcal{E}(\cdot)$, we have 
    \begin{align*}
    2 \inf_\psi \sup_{u^*} \mathcal{E}(\psi(\D))  
    \geq \inf_\psi \sup_{u^*} \E \|-\div(A \nabla \psi(\D)) + V \psi(\D) - f\|_{L^2(\Omega)}^2  + \lambda \inf_\psi \sup_{u^*} \E
    \|\psi(\D) - g\|_{L^2(\partial \Omega)}^2. 
\end{align*}
The statement follows directly from Lemma \ref{lower_bound_domain} and Lemma \ref{lower_bound_boundary}. 
\end{proof}

\begin{lemma} \label{lower_bound_domain}
    Assume the same setting as in Theorem \ref{lower_bound}. Then we have 
    \begin{equation*}
        \inf_\psi \sup_{u^*} \E \|-\div(A \nabla \psi(\D)) + V \psi(\D) - f\|_{L^2(\Omega)}^2 \gtrsim n_1^{-\frac{2(\beta-2)}{d + 2(\beta-2)}},
    \end{equation*}
    where the supremum is taken over all $u^* \in C^\beta(\bar{\Omega})$ which is a solution to \eqref{elliptic_pde} with $f \in C^{\beta-2}(\bar{\Omega})$ $g \in C^\beta(\partial{\Omega})$ such that $\|u^*\|_{C^\beta(\bar{\Omega})} \leq K$, and the infimum is taken over all estimators $\psi: (\R^{d})^{\otimes n_1} \times \R^{\otimes n_1} \times (\R^d)^{\otimes n_2} \times \R^{\otimes n_2}\to C^\beta(\bar{\Omega})$.
\end{lemma}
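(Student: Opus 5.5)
We reduce the problem to a standard nonparametric regression lower bound via local Fano's method, using a packing of solutions that vanish near the boundary, so that the boundary data carry no information.

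\emph{Construction of the packing.} Fix a ball $B_0\subset\subset\Omega$ and a smooth bump $\psi_0\in C_c^\infty(\R^d)$ supported in the unit ball. For $m\in\mathbb{N}$ to be chosen, place $N\asymp m^d$ disjoint cubes of side $1/m$ inside $B_0$ with centers $z_k$. Set $\psi_k(x)=m^{-\beta}\psi_0(m(x-z_k))$. For $\omega\in\{0,1\}^N$ define $u_\omega=c\sum_k\omega_k\psi_k$, with $c$ small so that $\|u_\omega\|_{C^\beta(\bar\Omega)}\le K$; this is the usual bump scaling. Each $u_\omega$ is compactly supported in $\Omega$, so $g_\omega\equiv0$ on $\partial\Omega$. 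Define $f_\omega:=-\div(A\nabla u_\omega)+Vu_\omega$; since $A,V$ are smooth, $f_\omega\in C^{\beta-2}$, and $u_\omega$ is the unique solution of \eqref{elliptic_pde} with data $(f_\omega,0)$.

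\emph{Varshamov--Gilbert step.} By the Varshamov--Gilbert lemma, we select $\omega^{(1)},\dots,\omega^{(M)}$ with $\log M\gtrsim N$ and pairwise Hamming distance $\gtrsim N$.

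\emph{Separation in the loss.} The elliptic operator $\mathcal{L}=-\div(A\nabla\cdot)+V$ acts locally, and the supports are disjoint. Hence
\[\|\mathcal{L}(u_\omega-u_{\omega'})\|_{L^2}^2=c^2\sum_k|\omega_k-\omega'_k|\,\|\mathcal{L}\psi_k\|_{L^2}^2 .\]
Scaling gives $\|\mathcal{L}\psi_k\|_{L^2}^2\gtrsim m^{-2(\beta-2)}m^{-d}$, because the leading term $A:\nabla^2\psi_k$ has size $m^{2-\beta}$ on a set of volume $m^{-d}$ and dominates the lower-order terms. Obtaining a lower bound rather than an upper bound is the one delicate point. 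It holds for large $m$ because $\|A:\nabla^2\psi_0(m\cdot)\|_{L^2}\ge r_{\min}$ times a positive constant: the local symbol is elliptic and nonzero on a nonzero compactly supported function. Consequently the separation $\|\mathcal{L}(u_\omega-u_{\omega'})\|_{L^2}^2\gtrsim m^{-2(\beta-2)}=:\delta^2$.

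\emph{Reduction to a testing problem and the triangle inequality.} For an estimator $\psi$, the risk term is $\|\mathcal{L}\psi-f_\omega\|_{L^2}$. The map $\omega\mapsto f_\omega$ is $2\delta$-separated in $L^2$ after the triangle inequality. So the minimum-distance test $\hat\omega=\arg\min_j\|\mathcal{L}\psi-f_{\omega^{(j)}}\|$ errs only if the loss exceeds $\delta$.

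\emph{KL bound.} The law of $\D$ under $\omega$ is a product of the interior-data law and the boundary-data law. The boundary law is identical across $\omega$ because $g_\omega=0$. Gaussian noise then gives
\[\KL=\tfrac{n_1}{2|\Omega|}\|f_\omega-f_{\omega'}\|_{L^2}^2\lesssim n_1 m^{-2(\beta-2)}.\]
This upper bound uses $\|\mathcal{L}\psi_k\|_{L^2}^2\lesssim m^{-2(\beta-2)-d}$, summed over at most $N$ bumps.

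\emph{Fano and choice of $m$.} Choose $m\asymp n_1^{1/(d+2(\beta-2))}$ with a small constant, so that $\KL\le\frac18\log M\asymp m^d$. Fano's inequality then gives error probability bounded below by $1/2$. This yields $\inf_\psi\sup\E\|\mathcal{L}\psi-f\|^2\gtrsim\delta^2= m^{-2(\beta-2)}\asymp n_1^{-\frac{2(\beta-2)}{d+2(\beta-2)}}$. The only nonroutine step is the lower bound on $\|\mathcal{L}\psi_k\|_{L^2}$ used for separation, handled as above by freezing the coefficients at $z_k$ and absorbing the $O(m^{-1})$ perturbation.
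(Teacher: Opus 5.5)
Your proposal is correct and follows essentially the same route as the paper: an interior bump packing with zero boundary trace, so that the KL divergence involves only $n_1$; Varshamov--Gilbert; separation from disjoint supports; and Fano with $m\asymp n_1^{1/(d+2(\beta-2))}$. Your frozen-coefficient argument for the separation lower bound fills in a step the paper only asserts, and by Plancherel it reduces to $\|A(z_k):\nabla^2\psi_0\|_{L^2}\ge r_{\min}\|\Delta\psi_0\|_{L^2}>0$.

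Two trivial fixes are needed. First, take $\psi_0$ supported in the ball of radius $1/2$ so the rescaled bumps are genuinely disjoint. Second, $\KL\le\frac18\log M$ is obtained by taking the amplitude $c$ small (as the paper does with $w$) or the constant in $m$ large, not small.
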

\begin{proof}
    Since $\Omega$ is open and nonempty, there exists a cube $B(x_0, r) = \{x \in \Omega: \|x - x_0\|_\infty < r\}$ with center $x_0 \in \Omega$ and radius $r > 0$ such that $B(x_0, r) \subset \subset \Omega$. Consider a bump function $g \in C^\infty(\R^d)$ with $\supp(g) \subset B(x_0, r)$ and $\|g\|_{C^{\beta} (\bar{\Omega})} \leq K$. Moreover, consider a uniform partition over $B(x_0, r)$ with grids $x^{(j)}, j \in [m]^d$, where the partition size $m$ is to be determined later. By Varshamov-Gilbert lemma \cite{Tsybakov2009}, there exist a sequence $\tau^{(1)}, \ldots, \tau^{(2^{m^d/8})} \in \{0, 1\}^d$ such that $\|\tau^{(k)} - \tau^{(k')}\|_1 \geq \frac{m^d}{8}$ for all $k, k' \in \{0, 1\}^d, k \neq k'$. We construct a packing as 
    \begin{equation*}
        u_k(x) = \sum_{j \in [m]^d} \tau^{(k)}_j  \frac{w}{m^{\beta}} g(m (x - x^{(j)})), \quad k \in [2^{m^d/8}],
    \end{equation*}
    where the constant $w < 1$ is to be determined. One can easily check that $\|u_k\|_{C^\beta(\bar{\Omega})} \leq K$ for all $k \in [2^{m^d/8}]$. 

    Furthermore, the distance of two distinct functions in the packing can be lower bounded as 
    \begin{align*}
           \|-\div (A \nabla (u_k -u_k')) + V(u_k - u_k') \|_{L^2(\Omega)}^2  \gtrsim \frac{w^2}{m^{2\beta - 4 + d}} \|\tau^{(k)} - \tau^{(k')}\|_1 \|\nabla^2 g\|_{L^2(\R^d)}^2 \geq \frac{w^2}{8 m^{2\beta - 4}} \|\nabla^2 g\|_{L^2(\R^d)}^2 \eqqcolon (2 \delta)^2 . 
    \end{align*}
    By Fano's method, we have
    \begin{align*}
     \inf_\psi \sup_{u^*} \E \|-\div(A \nabla \psi(\D)) + V \psi(\D) - f\|_{L^2(\Omega)}^2 
         \geq \delta^2 \left(1 - \frac{I(V; \D) + \log{2}}{\log(2^{m^d/8})}\right),
    \end{align*}
    where $V \sim \textnormal{U}(\{1, 2, \ldots, 2^{m^d/8}\})$, and we can upper bound the mutual information $I(V;\D)$ using local Fano's method 
    \begin{equation*}
        I(V; \D) \leq \frac{1}{|2^{m^d/8}|^2} \sum_{k \neq k'} D_{\KL}(P_k || P_{k'}) \leq \max_{k \neq k'} D_{\KL} (P_k, P_{k'}),
    \end{equation*}
     where $P_k$ is the joint distribution of $\D = \{X_i, f_i\}_{i=1}^{n_1} \cup \{Y_j, g_j\}_{j=1}^{n_2}$ conditioned on $V = k$. Then we can compute
    \begin{align*}
        D_{\KL} (P_k || P_{k'}) & = \frac{n_1}{2 |\Omega|} \|-\div (A \nabla (u_k -u_k')) + V(u_k - u_k') \|_{L^2(\Omega)}^2  + \frac{n_2}{2 |\partial \Omega|} \|u_k - u_{k'}\|_{L^2(\partial \Omega)}^2 \\
        & \leq C n_1\frac{w^2}{m^{2\beta - 4}},
    \end{align*}
    where the second term vanishes since $u_k$'s are supported on $B(x_0, r) \subset \subset \Omega$. 
    Hence choose $m = \lceil n_1^{\frac{1}{d+ 2(\beta-2)}} \rceil$ and $w$ to be sufficiently small so that
    \begin{align*}
        \frac{I(V; \D) + \log{2}}{\log(2^{m^d/8})} \leq 16 C\frac{n_1 w^2}{m^{2\beta + d-4} \log{2}}\leq \frac{1}{2}. 
    \end{align*}
    Hence we obtain a lower bound
    \begin{equation*}
         \inf_\psi \sup_{u^*} \E \|-\div(A \nabla \psi(\D)) + V \psi(\D) - f\|_{L^2(\Omega)}^2 \gtrsim \frac{1}{m^{2\beta - 4}} \asymp n_1^{-\frac{2(\beta-2)}{d+2(\beta-2)}}.
    \end{equation*}
\end{proof}

\begin{lemma} \label{lower_bound_boundary}
    Assume the same setting as in Theorem \ref{lower_bound}. Then we have
    \begin{equation*}
        \inf_\psi \sup_{u^*} \E
    \|\psi(\D) - g\|_{L^2(\partial \Omega)}^2 \gtrsim n_2^{-\frac{2\beta}{d-1 + 2\beta}},
    \end{equation*}
    where the supremum is taken over all $u^* \in C^\beta(\bar{\Omega})$ which is a solution to \eqref{elliptic_pde} with $f \in C^{\beta-2}(\bar{\Omega})$ $g \in C^\beta(\partial{\Omega})$ such that $\|u^*\|_{C^\beta(\bar{\Omega})} \leq K$, and the infimum is taken over all estimators $\psi: (\R^{d})^{\otimes n_1} \times \R^{\otimes n_1} \times (\R^d)^{\otimes n_2} \times \R^{\otimes n_2}\to C^\beta(\bar{\Omega})$.
\end{lemma}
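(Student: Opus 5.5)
We mirror the proof of Lemma \ref{lower_bound_domain}, but build the packing from functions that are \emph{$L$-harmonic}, meaning $-\div(A\nabla u)+Vu=0$ in $\Omega$. For such functions the interior observations $f_i=\epsilon_i$ carry no information, so the KL divergence depends only on $n_2$.

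\textbf{Boundary packing.} Since $\partial\Omega$ is a smooth compact $(d-1)$-manifold, we fix a coordinate chart $\varphi:U\to\partial\Omega$ from an open set $U\subset\R^{d-1}$ and a cube $Q\subset\subset U$. Take a bump $h\in C^\infty_c(\R^{d-1})$ supported in the unit cube, and a uniform grid $\{y^{(j)}\}_{j\in[m]^{d-1}}$ in $Q$. By Varshamov--Gilbert we obtain $\tau^{(1)},\dots,\tau^{(M)}\in\{0,1\}^{m^{d-1}}$ with $M\ge 2^{m^{d-1}/8}$ and pairwise Hamming distance at least $m^{d-1}/8$. Set
\begin{equation*}
g_k(\varphi(y))=\sum_{j}\tau^{(k)}_j\frac{w}{m^\beta}h\big(m(y-y^{(j)})\big),
\end{equation*}
extended by zero on the rest of $\partial\Omega$. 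Then $\|g_k\|_{C^\beta(\partial\Omega)}\lesssim w$. Let $u_k$ be the solution of \eqref{elliptic_pde} with $f=0$ and $g=g_k$. The Schauder-type estimate from \cite{grisvard2011elliptic} already invoked in Section \ref{pde_assumption} gives $\|u_k\|_{C^\beta(\bar\Omega)}\le C_{\rm ell}\|g_k\|_{C^\beta(\partial\Omega)}\lesssim w$. Choosing $w$ small keeps every $u_k$ in the admissible class with norm at most $K$, and each $u_k$ has $f=0\in C^{\beta-2}$.

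\textbf{Separation and KL.} On the boundary the $u_k$ coincide with the $g_k$, and the bumps have disjoint supports. Bounding the surface Jacobian of $\varphi$ above and below on $Q$, we get
\begin{equation*}
\|u_k-u_{k'}\|^2_{L^2(\partial\Omega)}\gtrsim \frac{w^2}{m^{2\beta+d-1}}\|\tau^{(k)}-\tau^{(k')}\|_1\|h\|^2_{L^2}\gtrsim \frac{w^2}{m^{2\beta}}=:(2\delta)^2 .
\end{equation*}
Both $f$ and $g$ are observed with unit Gaussian noise, and $f\equiv 0$ for every $k$. Hence
\begin{equation*}
D_{\KL}(P_k\|P_{k'})=\frac{n_2}{2|\partial\Omega|}\|g_k-g_{k'}\|^2_{L^2(\partial\Omega)}\lesssim n_2\frac{w^2}{m^{2\beta}},
\end{equation*}
since each such $g_k-g_{k'}$ has at most $m^{d-1}$ bumps of $L^2$-mass $w^2m^{-2\beta-(d-1)}$.

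\textbf{Conclusion via Fano.} Apply Fano's method exactly as in Lemma \ref{lower_bound_domain}, using the loss $\|\psi(\D)-g\|_{L^2(\partial\Omega)}$. The triangle inequality reduces estimation to testing with separation $\delta$. Take $m=\lceil n_2^{1/(d-1+2\beta)}\rceil$ and $w$ small, so that $(I(V;\D)+\log 2)/\log M\lesssim n_2w^2m^{-2\beta-(d-1)}\le 1/2$. This yields the lower bound $\delta^2\asymp m^{-2\beta}\asymp n_2^{-2\beta/(d-1+2\beta)}$.

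\textbf{Main obstacle.} The delicate step is the regularity transfer $\|u_k\|_{C^\beta(\bar\Omega)}\lesssim\|g_k\|_{C^\beta(\partial\Omega)}$ for the homogeneous problem with Dirichlet data. It requires the boundary Schauder estimate (valid for non-integer $\beta$; for integer $\beta$ one perturbs $\beta$ slightly or uses $C^{\beta-1,1}$ and accepts a constant loss). I would cite it from \cite{grisvard2011elliptic}, noting that $f=0$ and $V>0$ ensure uniqueness, so no extra interior contribution arises.
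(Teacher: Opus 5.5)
Your proposal is correct and follows essentially the same route as the paper: boundary bumps in a chart chosen by Varshamov--Gilbert are lifted to solutions with $f\equiv 0$, so the KL divergence involves only $n_2$, and Fano's method with $m\asymp n_2^{1/(d-1+2\beta)}$ finishes the argument. Your explicit use of the boundary Schauder estimate, with the caveat for integer $\beta$, justifies the step $\|u_k\|_{C^\beta(\bar\Omega)}\le K$ that the paper only asserts is ``easy to guarantee'' by taking $w$ small.
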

\begin{proof}
    We first straighten the boundary. Take $x_0 \in \partial \Omega$ and let $\phi: B \to \partial \Omega \cap U$ be a $C^\infty$-bijection such that both $\phi$ and $\phi^{-1}$ is $C^\infty$, where $U$ is an neighborhood of $x_0$ and $B \subset \R^{d-1}$ is a open cube. Consider a bump function $g \in C^\infty(\R^{d-1})$ with $\supp(g) \subset B$. Consider a uniform partition over $B$ with grids $y^{(j)}, j \in [m]^{d-1}$, where the partition size $m$ is to be determined. By Varshamov-Gilbert lemma \cite{Tsybakov2009}, there exists a sequence $\tau^{(1)}, \ldots, \tau^{(2^{m^{d-1}/8})} \in \{0, 1\}^{d-1}$ such that $\|\tau^{(k)} - \tau^{(k')}\|_1 \geq \frac{m^{d-1}}{8}$ for all $k, k' \in [m]^{d-1}, k \neq k'$. We construct a packing 
    \begin{equation*}
        g_k(y) = \sum_{j \in [m]^{d-1}} \tau_j^{(k)} \frac{w}{m^\beta} g(m(y -y^{(j)})), \quad k \in [2^{m^{d-1}/8}],
    \end{equation*}
    and for each $k \in [2^{m^{d-1}/8}]$, define
    \begin{equation*}
        \bar{g}_k (x) \coloneqq 
        \begin{cases}
            g_k \circ \phi^{-1}(x), & x \in \partial \Omega \cap U,\\
            0 & x \in \partial \Omega \backslash U,
        \end{cases}
    \end{equation*}
    and $\bar{g}_k \in C^\beta(\partial \Omega)$ since $\phi^{-1}$ is $C^\infty$. Moreover, let $u_k \in C^\beta(\bar{\Omega})$ be the solution to the PDE 
    \begin{equation*}
\begin{aligned}
    - \div(A \nabla u_k) + V u_k  = 0  \quad & \textnormal{in} \ \Omega, \\
    u_k = \bar{g}_k \quad & \textnormal{on} \ \partial \Omega,
\end{aligned}
\end{equation*}
and by choosing $w$ to be sufficiently small, it is easy to guarantee that $\|u_k\|_{C^\beta(\bar{\Omega})} \leq K$ for all $k \in [2^{m^{d-1}/8}]$. 

The distance of two distinct functions in the packing can be lower bounded as
\begin{align*}
    \|u_k - u_{k'}\|_{L^2(\partial \Omega)}^2 = \|\bar{g}_k - \bar{g}_{k'}\|_{L^2(\partial \Omega)}^2 \gtrsim \|g_k - g_{k'}\|_{L^2(B)}^2 & \gtrsim \frac{w^2}{m^{2\beta+d-1}} \|\tau^{(k)} - \tau^{(k')}\|_1 \|g\|_{L^2(B)}^2 \\ & \geq \frac{w^2}{8 m^{2\beta}} \|g\|_{L^2(B)}^2 \eqqcolon (2\delta)^2. 
\end{align*}
By Fano's method, we have
    \begin{align*}
        \inf_\psi \sup_{u^*} \E
    \|\psi(\D) - g\|_{L^2(\partial \Omega)}^2  & \geq \delta^2 \left(1 - \frac{I(V; \D) + \log{2}}{\log(2^{m^{d-1}/8})}\right),
    \end{align*}
    where $V \sim \textnormal{U}(\{1, 2, \ldots, 2^{m^{d-1}/8}\})$, and we can upper bound the mutual information $I(V;\D)$ using local Fano's method 
    \begin{equation*}
        I(V; \D) \leq \frac{1}{|2^{m^{d-1}/8}|^2} \sum_{k \neq k'} D_{\KL}(P_k || P_{k'}) \leq \max_{k, k'} D_{\KL} (P_k, P_{k'}),
    \end{equation*}
     where $P_k$ is the joint distribution of $\D = \{X_i, f_i\}_{i=1}^{n_1} \cup \{Y_j, g_j\}_{j=1}^{n_2}$ conditioned on $V = k$. Then we can compute
    \begin{align*}
        D_{\KL} (P_k || P_{k'}) & = \frac{n_1}{2 |\Omega|} \|-\div (A \nabla (u_k -u_k')) + V(u_k - u_k') \|_{L^2(\Omega)}^2  + \frac{n_2}{2 |\partial \Omega|} \|u_k - u_{k'}\|_{L^2(\partial \Omega)}^2 \\
        & \leq C n_2 \frac{w^2}{m^{2\beta}},
    \end{align*}
    where the first term vanishes since $\Delta u_k = 0$ in $\Omega$. Then we can choose $m = \lceil n_2^{\frac{1}{d -1 + 2\beta}} \rceil$ and $w$ to be sufficiently small so that
    \begin{align*}
        \frac{I(V; \D) + \log{2}}{\log(2^{m^{d-1}/8})} \leq 16 C\frac{n_2 w^2}{m^{2\beta + d-1} \log{2}}\leq \frac{1}{2}, 
    \end{align*}
    and we obtain a lower bound
    \begin{equation*}
        \inf_\psi \sup_{u^*} \E
    \|\psi(\D) - g\|_{L^2(\partial \Omega)}^2 \gtrsim \frac{1}{m^{2\beta}} \asymp n_2^{-\frac{2\beta}{d-1+2\beta}}. 
    \end{equation*}
\end{proof}

\section{Auxiliary tools}

\subsection{Auxiliary tools for Lemma \ref{contraction_empirical}}

\paragraph{Hypothesis testing condition} Let $\Theta$ be a parameter space and $\theta_0 \in \Theta$ be the ground truth. Let $d_n$ and $e_n$ be semimetrics on $\Theta$ such that there exists $K > 0$ and $\xi > 0$ such that for any $\epsilon > 0$ and for any $\theta_1 \in \Theta$ with $d_n(\theta_1, \theta_0) > \epsilon$, there exists a test $\phi_n$ such that 
\begin{align} \label{hypothesis_testing_condition}
    P_{\theta_0}^{(n)} \phi_n \leq e^{-K n \epsilon^2 }, \quad \sup_{\theta: e_n(\theta, \theta_1) < \xi \epsilon} P_\theta^{(n)} (1 - \phi_n ) \leq e^{-K n \epsilon^2}. 
\end{align}
The following result shows that the Euclidean distance satisfies the hypothesis testing condition with $\xi = \frac{1}{2}$ and $K = \frac{1}{32}$ under the gaussian regression setting.
\begin{lemma}[{\citep[Lemma 8.27]{ghosal2017fundamentals}}] \label{testing_lemma}
For $\theta \in \R^n$ and $\sigma > 0$, let $P_\theta^{(n)} = \N(\theta, \sigma^2 I_n)$. Then for any $\theta_0, \theta_1 \in \R^n$, the test $\phi_n = \mathbf{1}_{\{\theta_1^T X >\|\theta_1\|^2_2/2\}}$ satisfies
\begin{align*}
    P_{\theta_0}^{(n)} \phi_n \vee P_{\theta}^{(n)} (1 - \phi_n) \leq e^{- \|\theta_0 - \theta_1\|_2^2 / (32\sigma^2)}
\end{align*}
for all $\theta \in \R^n$ with $\|\theta - \theta_1\|_2 \leq \|\theta_0 - \theta_1\|_2/2$.
\end{lemma}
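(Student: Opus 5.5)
The plan is a direct Gaussian tail computation for the stated test $\phi_n=\mathbf{1}_{\{\theta_1^TX>\|\theta_1\|_2^2/2\}}$. The threshold $\|\theta_1\|_2^2/2$ contains no $\theta_0$, which only makes sense in the normalization $\theta_0=0$, where $\|\theta_0-\theta_1\|_2=\|\theta_1\|_2$. This is the normalization of \citep[Lemma 8.27]{ghosal2017fundamentals}, so I will read the statement that way. The only structural fact used is that, under $P_\theta^{(n)}=\N(\theta,\sigma^2I_n)$, the statistic $\theta_1^TX$ is univariate Gaussian with mean $\theta_1^T\theta$ and variance $\sigma^2\|\theta_1\|_2^2$. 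Both error probabilities then reduce to one-dimensional Gaussian tails, bounded by $\Phi(-t)\le e^{-t^2/2}$.

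\emph{Type I error.} Under $P_{0}^{(n)}$ the statistic $\theta_1^TX$ has law $\N(0,\sigma^2\|\theta_1\|_2^2)$. Hence
\begin{equation*}
P_0^{(n)}\phi_n=\Phi\Big(-\tfrac{\|\theta_1\|_2}{2\sigma}\Big)\le e^{-\|\theta_1\|_2^2/(8\sigma^2)}\le e^{-\|\theta_1\|_2^2/(32\sigma^2)},
\end{equation*}
which is the claimed bound with room to spare.

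\emph{Type II error.} For $\theta$ near $\theta_1$, write the mean as $\theta_1^T\theta=\|\theta_1\|_2^2+\theta_1^T(\theta-\theta_1)$. By Cauchy--Schwarz,
\begin{equation*}
\theta_1^T\theta\ge\|\theta_1\|_2^2-\|\theta_1\|_2\|\theta-\theta_1\|_2 .
\end{equation*}
The gap between this mean and the threshold $\|\theta_1\|_2^2/2$, divided by the standard deviation $\sigma\|\theta_1\|_2$, gives the Gaussian tail exponent for $P_\theta^{(n)}(1-\phi_n)$.

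\emph{Main obstacle: the admissible radius.} With the radius $\|\theta-\theta_1\|_2\le\|\theta_1\|_2/2$ as written, the mean lower bound is exactly $\|\theta_1\|_2^2/2$, which equals the threshold. The margin is then zero and the worst-case Type II error is $1/2$. For example, $\theta=\theta_1/2$ lies in the ball and has $P_\theta^{(n)}(1-\phi_n)=1/2$. So for the literal test and constant, the bound cannot hold on the full radius-$1/2$ ball.

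The constant $32$ pins down the consistent radius. If $\|\theta-\theta_1\|_2\le\|\theta_1\|_2/4$, the mean is at least $\tfrac34\|\theta_1\|_2^2$, leaving a margin of $\tfrac14\|\theta_1\|_2^2$. The tail bound then gives
\begin{equation*}
P_\theta^{(n)}(1-\phi_n)\le\Phi\Big(-\tfrac{\|\theta_1\|_2}{4\sigma}\Big)\le e^{-\|\theta_1\|_2^2/(32\sigma^2)},
\end{equation*}
which reproduces the stated exponent exactly. I would therefore prove the Type I bound as stated. For Type II, I would prove it on the ball of radius $\|\theta_0-\theta_1\|_2/4$, where the stated constant is recovered, and flag the factor $1/2$ in the statement as needing this correction.

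For the paper's use in \eqref{hypothesis_testing_condition}, the correction only changes $\xi$ from $\tfrac12$ to $\tfrac14$. That change affects constants in the covering argument behind \eqref{phi1}, not rates.
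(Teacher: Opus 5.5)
Your diagnosis is correct. As transcribed, the test only makes sense with $\theta_0=0$: for $\theta_0=2\theta_1$, for instance, even the Type I error exceeds $1/2$. With the midpoint threshold $\|\theta_1\|_2^2/2$, the point $\theta=\theta_1/2$ lies in the closed ball of radius $\|\theta_1\|_2/2$ and has Type II error exactly $1/2$. So the stated bound fails as soon as $\|\theta_1\|_2^2>32\sigma^2\log 2$. The paper gives no argument of its own beyond the citation to \citep[Lemma 8.27]{ghosal2017fundamentals}. Your reduction to the univariate statistic $\theta_1^TX\sim\N(\theta_1^T\theta,\sigma^2\|\theta_1\|_2^2)$, combined with $\Phi(-t)\le e^{-t^2/2}$, is the standard proof, and your radius-$\tfrac14$ version is correct.

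There is an alternative repair that leaves the paper's downstream constants untouched. Keep the radius $\|\theta_0-\theta_1\|_2/2$, so that $\xi=\tfrac12$ as used in \eqref{hypothesis_testing_condition} and in the $\epsilon_n/4$ covering of \eqref{entropy_condition}. Then lower the threshold to a quarter, i.e.\ use the test $\mathbf{1}_{\{(\theta_1-\theta_0)^T(X-\theta_0)>\|\theta_1-\theta_0\|_2^2/4\}}$. The Type I error is then $\Phi(-\|\theta_1-\theta_0\|_2/(4\sigma))$. On the radius-$\tfrac12$ ball the mean of the statistic is at least $\|\theta_1-\theta_0\|_2^2/2$, so the Type II error is also at most $\Phi(-\|\theta_1-\theta_0\|_2/(4\sigma))$. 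Both are therefore at most $e^{-\|\theta_0-\theta_1\|_2^2/(32\sigma^2)}$, which recovers exactly the pair $(\xi,K)=(\tfrac12,\tfrac1{32})$ the paper claims. Your fix instead changes $\xi$ to $\tfrac14$. As you note, this only forces a covering at $\epsilon_n/8$ in the entropy condition and changes constants, not rates.
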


\begin{lemma}[{\citep[Lemma 9] {ghosal2007convergence}}] \label{enhanced_testing_lemma}
Suppose $d_n$ and $e_n$ satisfy the above hypothesis testing condition \eqref{hypothesis_testing_condition}. Suppose there exists some nonincreasing function $\epsilon \mapsto N(\epsilon)$ and some $\epsilon_n > 0$ such that for any $\epsilon > \epsilon_n$
\begin{align*}
    \N(\frac{\epsilon \xi
    }{2}, \{\theta \in \Theta: d_n(\theta, \theta_0) < \epsilon\}, e_n) \leq N(\epsilon).
\end{align*}
Then for any $\epsilon > \epsilon_n$, there exists a test $\phi_n$ possibly depending on $\epsilon$ such that
\begin{align*}
    P_{\theta_0}^{(n)} \phi_n \leq N(\epsilon) \frac{e^{-K n \epsilon^2}}{1 - e^{-K n \epsilon^2}}, \quad P_\theta^{(n)} (1 - \phi_n) \leq e^{-K n \epsilon^2 j^2},
\end{align*}
for any $j \in \mathbb{N}$ and $\theta \in \Theta$ with $d_n(\theta, \theta_0) > j \epsilon$.
\end{lemma}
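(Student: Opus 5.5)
The plan is the standard peeling-plus-union-of-tests construction. Fix $\epsilon>\epsilon_n$ and split the alternative $\{\theta\in\Theta: d_n(\theta,\theta_0)>\epsilon\}$ into shells
\begin{align*}
S_j \coloneqq \{\theta\in\Theta:\ j\epsilon< d_n(\theta,\theta_0)\le (j+1)\epsilon\},\quad j\in\mathbb{N}.
\end{align*}
On each shell I build finitely many elementary tests from the hypothesis testing condition \eqref{hypothesis_testing_condition}, and then take $\phi_n$ to be the maximum of all of them.

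\textbf{Covering each shell.} Since $(j+1)\epsilon\le 2j\epsilon$, we have $S_j\subset\{\theta: d_n(\theta,\theta_0)<2j\epsilon'\}$ for any $\epsilon'>\epsilon$ slightly larger; I will suppress this harmless perturbation. Applying the entropy assumption at the level $2j\epsilon>\epsilon_n$ covers $S_j$ by at most $N(2j\epsilon)\le N(\epsilon)$ $e_n$-balls of radius $\xi\cdot 2j\epsilon/2=\xi j\epsilon$, using that $N$ is nonincreasing. I take the covering-number convention in which centers lie in the set, so the centers $\theta_{j,l}$ lie in $S_j$. If the other convention is intended, one adjusts the constants by working at a slightly smaller scale; this bookkeeping of radii is the only delicate point, and I expect it to be the main technical obstacle.

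\textbf{Elementary tests.} Each center satisfies $d_n(\theta_{j,l},\theta_0)>j\epsilon$. Condition \eqref{hypothesis_testing_condition}, applied with $j\epsilon$ in place of $\epsilon$, gives a test $\phi_{j,l}$ with $P^{(n)}_{\theta_0}\phi_{j,l}\le e^{-Knj^2\epsilon^2}$ and $P^{(n)}_\theta(1-\phi_{j,l})\le e^{-Knj^2\epsilon^2}$ for all $\theta$ with $e_n(\theta,\theta_{j,l})<\xi j\epsilon$. By the covering, every $\theta\in S_j$ lies in such a ball.

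\textbf{Combining.} Set $\phi_n\coloneqq\sup_{j,l}\phi_{j,l}$. A union bound gives
\begin{align*}
P^{(n)}_{\theta_0}\phi_n\le \sum_{j\ge1}N(\epsilon)e^{-Knj^2\epsilon^2}\le N(\epsilon)\sum_{j\ge1}e^{-Knj\epsilon^2}= N(\epsilon)\frac{e^{-Kn\epsilon^2}}{1-e^{-Kn\epsilon^2}},
\end{align*}
using $j^2\ge j$. For the type II error, take $\theta$ with $d_n(\theta,\theta_0)>j\epsilon$. Then $\theta\in S_i$ for some $i\ge j$, and $\theta$ lies in one of the covering balls of $S_i$. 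Hence $P^{(n)}_\theta(1-\phi_n)\le P^{(n)}_\theta(1-\phi_{i,l})\le e^{-Kni^2\epsilon^2}\le e^{-Knj^2\epsilon^2}$, which is the claim.
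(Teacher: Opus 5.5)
Your skeleton is the standard shell argument: shells $S_j$, one level-$j\epsilon$ test per covering point, $\phi_n=\sup_{j,l}\phi_{j,l}$, the union bound using $j^2\ge j$, and the type II bound via the shell containing $\theta$. Those steps are fine. The paper itself gives no proof and only cites Ghosal and van der Vaart.

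The gap is the covering step, and it is not a matter of bookkeeping. The hypothesis at level $2j\epsilon$ covers the $d_n$-ball $\{\theta: d_n(\theta,\theta_0)<2j\epsilon\}$. Even under the convention that centers lie in the covered set, they lie in that ball, not in $S_j$. A center with $d_n(\theta_{j,l},\theta_0)\le j\epsilon$ admits no level-$j\epsilon$ test from \eqref{hypothesis_testing_condition}. Discarding the balls that miss $S_j$ does not move the remaining centers.

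No choice of scale repairs this. A ball $\{d_n<\delta\}$ contains $S_j$ only if $\delta>(j+1)\epsilon$. Recentering its covering balls (radius $\xi\delta/2$) at points $\theta_1\in S_j$ gives radius $\xi\delta>\xi(j+1)\epsilon$. But tests around $\theta_1$ only reach radius $\xi\tau$ with $\tau<d_n(\theta_1,\theta_0)\le (j+1)\epsilon$. Your perturbation $\epsilon'>\epsilon$ at $j=1$ has the same defect, since it pushes the covering radius above $\xi\epsilon$.

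In fact, with a $d_n$-ball in the hypothesis, the statement as restated here is false, so this step cannot be closed by any route. Here is a counterexample.
\begin{itemize}
\item Observe $X=(X_0,\dots,X_M)\sim\N(\mu_\theta,n^{-1}I_{M+1})$ with $\Theta=\{\theta_0,h,\theta^{(1)},\dots,\theta^{(M)}\}$ and means $0$, $c v_0$, $c v_k$, where $v_0,\dots,v_M$ are the coordinate vectors.
\item Set $d_n(\theta^{(k)},\theta_0)=r$ and $d_n(h,\theta_0)=\eta<r/2$.
\item Set $e_n(h,\theta^{(k)})=\xi r/2$, $e_n(\theta^{(k)},\theta^{(l)})=\xi r$ for $k\neq l$, and $e_n(\theta_0,\cdot)$ very large; complete both to metrics.
\item \emph{Testing condition holds.} For $\tau<r$ the set $\{\theta: e_n(\theta,\theta^{(k)})<\xi\tau\}$ lies in $\{\theta^{(k)},h\}$. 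So the tests $\mathbf{1}\{X_0>c/2\}\vee\mathbf{1}\{X_k>c/2\}$, and $\mathbf{1}\{X_0>c/2\}$ for $\theta_1=h$, verify \eqref{hypothesis_testing_condition} once $2e^{-nc^2/8}\le e^{-Knr^2}$.
\item \emph{Ball hypothesis holds.} Every $\{d_n<\delta\}$ is covered at radius $\xi\delta/2$ by the balls around $\theta_0$ and $h$, so $N\equiv 2$.
\item \emph{Conclusion fails.} Take $\epsilon\in(\epsilon_n,r)$ with $e^{-Kn\epsilon^2}\le 1/10$ (choose $r$ large). The conclusion would give $P^{(n)}_{\theta_0}\phi_n+\max_k P^{(n)}_{\theta^{(k)}}(1-\phi_n)<1/2$. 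This is impossible once $M\ge e^{nc^2}$: the mixture $\bar P=M^{-1}\sum_k P^{(n)}_{\theta^{(k)}}$ has $\chi^2(\bar P\|P^{(n)}_{\theta_0})=(e^{nc^2}-1)/M$, so no test separates it from $P^{(n)}_{\theta_0}$ that well.
\end{itemize}

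What the argument needs is entropy of the shells, for example $\N(\xi\epsilon/2,\{\theta:\epsilon<d_n(\theta,\theta_0)\le 2\epsilon\},e_n)\le N(\epsilon)$. A global bound also suffices, and that is what the paper actually verifies for $\F_n$ in \eqref{entropy_condition}, with $\xi=1/2$. Under either, replace your cover by a maximal $\xi j\epsilon$-separated subset of $S_j$:
\begin{itemize}
\item its points lie in $S_j$, so level-$j\epsilon$ tests exist around them;
\item maximality puts every $\theta\in S_j$ at $e_n$-distance $<\xi j\epsilon$ from one of them;
\item each open $e_n$-ball of radius $\xi j\epsilon/2$ contains at most one of them, and $S_j\subset\{j\epsilon<d_n\le 2j\epsilon\}$, so there are at most $N(j\epsilon)\le N(\epsilon)$ of them.
\end{itemize}
With this replacement the rest of your proof goes through unchanged.
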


\begin{lemma}[{\citep[Lemma 10]{ghosal2007convergence}}] \label{evidence_lower_bound}
    For every $\epsilon > 0$, denote as
    \begin{align*}
        B_n(\theta_0, \epsilon) = \{\theta \in \Theta: \frac{1}{n} \sum_{i=1}^n K(p_{\theta_0, i}, p_{\theta, i}) \leq  \epsilon^2, \frac{1}{n}\sum_{i=1}^n V_{2, 0}(p_{\theta_0, i}, p_{\theta, i})\leq  \epsilon^2 \},
    \end{align*}
    where $K$ and $V_{2, 0}$ represent the KL divergence and variance respectively. Then for every probability measure $\bar{\Pi}$ supported on the set $B_n(\theta_0, \epsilon)$, for every $C > 0$, 
    \begin{align*}
        P_{\theta_0}^{(n)} \left(\int \frac{p_{\theta}^{(n)}}{p_{\theta_0}^{(n)}} d\bar{\Pi}(\theta) \leq e^{-(1 + C) n \epsilon^2}\right) \leq C^{-1} (n \epsilon^2)^{-1}.
    \end{align*}
    Note that in the case that $p_{\theta, i}$ is the density of $\N(y_{\theta, i}, \sigma^2)$, we have
    \begin{align*}
        B_n(\theta_0, \epsilon) = \{\theta \in \Theta: \frac{1}{n} \sum_{i=1}^n \frac{|y_{\theta, i} - y_{\theta_0, i}|^2}{\sigma^2} \leq \epsilon^2\}.
    \end{align*}
\end{lemma}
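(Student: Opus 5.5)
This statement is \citep[Lemma 10]{ghosal2007convergence}, and I would reprove it by the standard Jensen-plus-Chebyshev argument. Write $\ell_n(\theta)=\log\big(p_\theta^{(n)}/p_{\theta_0}^{(n)}\big)=\sum_{i=1}^n \log\big(p_{\theta,i}/p_{\theta_0,i}\big)(Z_i)$ for the log-likelihood ratio of the independent observations $Z_1,\dots,Z_n$. Since $\bar\Pi$ is a probability measure and $\log$ is concave, Jensen's inequality gives
\begin{equation*}
\log\int \frac{p_\theta^{(n)}}{p_{\theta_0}^{(n)}}\,d\bar\Pi(\theta)\;\geq\;\int \ell_n(\theta)\,d\bar\Pi(\theta)\eqqcolon Z .
\end{equation*}
Hence the event in the statement is contained in $\{Z\le -(1+C)n\epsilon^2\}$, and it suffices to bound the probability of the latter.

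The first step is to compute the mean of $Z$. By Fubini, $P_{\theta_0}^{(n)} Z=-\int\sum_i K(p_{\theta_0,i},p_{\theta,i})\,d\bar\Pi(\theta)\ge -n\epsilon^2$, because $\bar\Pi$ is supported on $B_n(\theta_0,\epsilon)$. Consequently the event $\{Z\le-(1+C)n\epsilon^2\}$ forces $Z-P_{\theta_0}^{(n)}Z\le -Cn\epsilon^2$.

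The second step is to control the variance of $Z$. By independence of the $Z_i$, $Z-P_{\theta_0}^{(n)}Z=\sum_i (W_i-P_{\theta_0}^{(n)}W_i)$ with $W_i=\int \log(p_{\theta,i}/p_{\theta_0,i})(Z_i)\,d\bar\Pi(\theta)$. Applying Jensen (or Cauchy--Schwarz) in $\theta$ gives
\begin{equation*}
\mathrm{Var}(Z)\le \sum_i\int V_{2,0}(p_{\theta_0,i},p_{\theta,i})\,d\bar\Pi(\theta)\le n\epsilon^2 .
\end{equation*}
Here I use that $V_{2,0}$ is the centered second moment, and that the variance of a $\bar\Pi$-average is at most the average of the variances.

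Chebyshev's inequality then yields
\begin{equation*}
P_{\theta_0}^{(n)}\big(Z-P_{\theta_0}^{(n)}Z\le -Cn\epsilon^2\big)\le \frac{n\epsilon^2}{C^2n^2\epsilon^4}=C^{-2}(n\epsilon^2)^{-1}.
\end{equation*}
This is stronger than the claimed bound $C^{-1}(n\epsilon^2)^{-1}$ when $C\ge1$. For $C<1$, the claimed bound exceeds $(n\epsilon^2)^{-1}$; I would check the normalization carefully in that regime, if necessary by following the original convention, in which the Kullback--Leibler ball is defined with constant $1$ and the bound is stated up to constants.

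For the Gaussian remark, I would compute directly. For $\N(y_\theta,\sigma^2)$ versus $\N(y_{\theta_0},\sigma^2)$ one has $K=|y_\theta-y_{\theta_0}|^2/(2\sigma^2)$ and $V_{2,0}=|y_\theta-y_{\theta_0}|^2/\sigma^2$. Both quantities are therefore controlled by $\frac1n\sum_i|y_{\theta,i}-y_{\theta_0,i}|^2/\sigma^2\le\epsilon^2$, which gives the stated description of $B_n$ (with $V$ being the binding constraint).

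The main obstacle is the measurability and Fubini interchange when passing the expectation through $\int\cdot\,d\bar\Pi$. This is routine under joint measurability of $(\theta,z)\mapsto p_\theta(z)$.
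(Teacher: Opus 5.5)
Your argument (Jensen on the logarithm, then Chebyshev, with the variance pushed inside the $\bar\Pi$-integral and bounded by the average of the centered $V_{2,0}$) is exactly the proof of \citep[Lemma 10]{ghosal2007convergence}. The paper quotes that lemma without proof, and each of your steps is correct. The bound you obtain, $C^{-2}(n\epsilon^2)^{-1}$, is what the cited lemma asserts: it is the case $k=2$ of $C^{-k}(n\epsilon^2)^{-k/2}$, stated exactly rather than up to constants. The $C^{-1}$ in the paper is a misquotation. Your proof therefore gives the displayed inequality for all $C\ge 1$. That covers every use in the paper, since only $C_1=1$ is ever taken (in the proofs of Lemma~\ref{contraction_empirical} and Corollary~\ref{upper_bound_posterior_mean}), and there the two bounds coincide.

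You should drop the plan to ``check the normalization'' for $C<1$. No convention rescues that regime, because the inequality with $C^{-1}$ is false there. Take $n=1$, $C=\tfrac{1}{2}$, $\epsilon^2=8$, and a three-point sample space with $p_{\theta_0}=(\tfrac{1}{3},\tfrac{2}{3},0)$ and $p_\theta=(\tfrac{1}{3}e^{-12},\tfrac{2}{3}e^{-6},1-\tfrac{1}{3}e^{-12}-\tfrac{2}{3}e^{-6})$. Under $p_{\theta_0}$ the log-likelihood ratio equals $-12$ with probability $\tfrac{1}{3}$ and $-6$ with probability $\tfrac{2}{3}$. Hence $K=8=\epsilon^2$ and $V_{2,0}=\tfrac{1}{3}\cdot 16+\tfrac{2}{3}\cdot 4=8=\epsilon^2$. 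With $\bar\Pi=\delta_\theta$, the event $\{p_\theta/p_{\theta_0}\le e^{-(1+C)n\epsilon^2}=e^{-12}\}$ has probability $\tfrac{1}{3}$. This exceeds $\tfrac{1}{4}=C^{-1}(n\epsilon^2)^{-1}$, while it is consistent with your bound $C^{-2}(n\epsilon^2)^{-1}=\tfrac{1}{2}$. Adding $n-1$ coordinates with $p_{\theta,i}=p_{\theta_0,i}$ and setting $\epsilon^2=8/n$ gives the same violation for every $n$. So the right write-up is: prove the lemma with $C^{-2}$, note that it implies the stated bound for $C\ge 1$, and flag the $C<1$ case of the statement as a misprint that the paper never needs.
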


\subsection{Auxiliary tools for Proposition \ref{approximation}}

\begin{lemma}[{\citep[Theorem 4.5]{lu2021machine}}] \label{power_relu3}
For $x \geq 0$,
\begin{align*}
    x = -\frac{1}{2}[\sigma_3(x+3) -5 \sigma_3(x+2) + 7 \sigma_3(x+1) - 3\sigma_3(x) + 6],
\end{align*}
\begin{align*}
    x^2 = -\frac{1}{6} [\sigma_3(x+2) - 4 \sigma_3(x+1) + 3\sigma_3(x)-4]. 
\end{align*}
\end{lemma}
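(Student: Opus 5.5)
The plan is a direct polynomial verification. The key observation is that for $x \geq 0$ every shifted argument $x+c$ with $c \in \{0,1,2,3\}$ is nonnegative, so $\sigma_3(x+c) = (\max\{x+c,0\})^3 = (x+c)^3$. Both right-hand sides then become ordinary cubic polynomials in $x$. It suffices to expand them and compare coefficients, so no case analysis or approximation argument is needed.

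\emph{Second identity.} Expand
\[
(x+2)^3 - 4(x+1)^3 + 3x^3 - 4
\]
and collect powers of $x$:
\begin{itemize}
\item the $x^3$ coefficients are $1-4+3=0$;
\item the $x^2$ coefficients are $6-12=-6$;
\item the $x$ coefficients are $12-12=0$;
\item the constants are $8-4-4=0$.
\end{itemize}
Hence the bracket equals $-6x^2$, and multiplying by $-\tfrac16$ gives $x^2$, as claimed.

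\emph{First identity.} Expand
\[
(x+3)^3 - 5(x+2)^3 + 7(x+1)^3 - 3x^3 + 6
\]
in the same way:
\begin{itemize}
\item the $x^3$ coefficients are $1-5+7-3=0$;
\item the $x^2$ coefficients are $9-30+21=0$;
\item the $x$ coefficients are $27-60+21=-12$;
\item the constants are $27-40+7+6=0$.
\end{itemize}
So the bracket equals $-12x$. This is where I expect the only real issue. With the prefactor $-\tfrac12$ as written, the right-hand side equals $6x$, not $x$. The correct normalization is therefore $-\tfrac1{12}$. I would state the lemma with that constant, or equivalently absorb the factor into the outgoing weight.

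This correction does not affect how the lemma is used in the proof of Proposition \ref{approximation}. There the identity serves only to show that the affine map $x\mapsto \frac{x+D}{2D}$ (after shifting to a nonnegative argument) and products are realized exactly by a single $\sigma_3$ layer with $O(1)$ neurons and $O(1)$-bounded weights. A change of the constant from $\tfrac12$ to $\tfrac1{12}$ changes $B$ only by an absolute factor.

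The constant terms $+6$ and $-4$ are bias entries of the output layer, so both representations are genuine one-hidden-layer $\sigma_3$-networks with $4$ and $3$ neurons respectively.
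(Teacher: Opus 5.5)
Your proposal is correct. The paper gives no argument here beyond citing \cite{lu2021machine}, and since every shift $x+c$ is nonnegative for $x \ge 0$, expanding the cubics and comparing coefficients is exactly the right verification; it confirms the $x^2$ identity as printed. You are also right that the first identity is misprinted: the bracket equals $-12x$ (at $x=1$ it is $64-135+56-3+6=-12$, so the printed right-hand side gives $6$). The prefactor must therefore be $-\tfrac{1}{12}$, and this change only shrinks the output weights, so it does not affect Proposition \ref{approximation}, where the identity is applied to the shifted argument $x+D \ge 0$.
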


\begin{lemma}[{\citep[Theorem 4.5]{lu2021machine}}] \label{multiplication_relu3}
    For $x, y \geq 0$, 
    \begin{align*}
        xy & = -\frac{1}{12}\left[\sigma_3(x+y-2)- 4\sigma_3(x+y-1) + 3\sigma_3(x+y) \right. \\
        & \left. -\sigma_3(x+2) + 4 \sigma_3(x+1) - 3 \sigma_3(x)-\sigma_3(y+2) + 4 \sigma_3(y+1) - 3 \sigma_3(y) + 4 \right].
    \end{align*}
\end{lemma}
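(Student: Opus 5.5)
This is the multiplication identity for $x,y\ge 0$. Since every argument of $\sigma_3$ appearing with a nonnegative shift is nonnegative, I would first remove the positive parts wherever they are trivial. For $x\ge 0$ we have $\sigma_3(x+2)=(x+2)^3$, $\sigma_3(x+1)=(x+1)^3$ and $\sigma_3(x)=x^3$, and likewise for $y$, and $\sigma_3(x+y)=(x+y)^3$. The only delicate terms are $\sigma_3(x+y-1)$ and $\sigma_3(x+y-2)$, which need not equal cubes when $x+y<2$.

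The plan is to reduce to Lemma \ref{power_relu3} via the polarization identity $xy=\tfrac12[(x+y)^2-x^2-y^2]$. Apply the square formula of Lemma \ref{power_relu3} to each of $x$, $y$, $s:=x+y\ge 0$:
\begin{equation*}
z^2=-\tfrac16[\sigma_3(z+2)-4\sigma_3(z+1)+3\sigma_3(z)-4],\quad z\ge 0 .
\end{equation*}
First I would verify this for $z\ge0$, where all arguments are nonnegative:
\begin{equation*}
(z+2)^3-4(z+1)^3+3z^3=(1-4+3)z^3+(6-12)z^2+(12-12)z+(8-4)=-6z^2+4 .
\end{equation*}
Hence $-\tfrac16[(-6z^2+4)-4]=z^2$, as required.

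Substituting into the polarization identity gives
\begin{equation*}
xy=-\tfrac1{12}\big[\Sigma(s)-\Sigma(x)-\Sigma(y)-4+4+4\big],\qquad \Sigma(z):=\sigma_3(z+2)-4\sigma_3(z+1)+3\sigma_3(z).
\end{equation*}
This yields exactly the bracket in the statement with constant $+4$, except that the $s$-terms come out as $\sigma_3(s+2)-4\sigma_3(s+1)+3\sigma_3(s)$. The stated lemma instead has the shifted combination $\sigma_3(s-2)-4\sigma_3(s-1)+3\sigma_3(s)$.

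This mismatch is the main obstacle, and I would check it directly. With $s\ge 2$ the stated combination evaluates to $(s-2)^3-4(s-1)^3+3s^3=6s^2-4$, not $-6s^2+4$, so the sign flips. With the $+4$ constant the stated formula then fails; at $x=y=1$ it does not give $1$. I would therefore conclude that the intended identity uses $\sigma_3(x+y+2)-4\sigma_3(x+y+1)+3\sigma_3(x+y)$, as in \cite{lu2021machine}, and that the shifts in the statement are a typo. I would prove the corrected version by the computation above.

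If the shifted form must be kept, I would try instead the mirrored identity $\sigma_3(-s+2)$-type terms. That route fails for $x,y\ge0$ with $s\le 2$ because of piecewise behavior, so I would not pursue it. All that remains after the correction is the polynomial verification, which is routine.
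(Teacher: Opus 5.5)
Your diagnosis is correct: the statement as printed is false, and your polarization argument proves the corrected identity. For $x,y\ge 0$ and $s=x+y\ge 2$, the printed bracket equals $6(s^2+x^2+y^2)-8$. So $x=y=1$ gives $-7/3$. Already at $x=y=0$ the right-hand side is $-\tfrac{1}{12}(0-4-4+4)=\tfrac13\neq 0$.

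Your argument uses $\Sigma(z)=\sigma_3(z+2)-4\sigma_3(z+1)+3\sigma_3(z)=-6z^2+4$ for $z\ge 0$. From this it proves the version with $\sigma_3(x+y+2)-4\sigma_3(x+y+1)+3\sigma_3(x+y)$, and the constant $+4$ comes out exactly as stated. You were right to check the square formula directly rather than trust Lemma \ref{power_relu3}: its first identity, as printed, actually evaluates to $6x$ for $x\ge 0$.

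The paper gives no proof of its own here, only the citation to \cite{lu2021machine}, so there is no competing route to compare. Your computation is the only actual verification available, and it is self-contained. It is also all that the proof of Proposition \ref{approximation} needs. There the quantities being multiplied are nonnegative: copies of $\sigma_3(x-c)$, constants $1$, positive coefficient factors, and nonnegative B-spline factors. The corrected formula still uses nine $\sigma_3$ units per product, so the width and sparsity counts are unchanged.

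Two small trims would help. The attribution ``as in \cite{lu2021machine}'' is unnecessary, since the correction stands on your computation alone. The closing remark about a mirrored identity can be dropped.
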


\subsection{Auxiliary tools for verifying Lemma \ref{contraction_empirical}}

\begin{lemma}[{\cite{lu2021machine}}] \label{lipschitzness}
     Let $\theta_1, \theta_2 \in \Theta(L, W, S, B)$. Then we have
     \begin{align*}
         & \sup_{x \in \Omega} |f_{\theta_1}(x) -  f_{\theta_2}(x)| \vee  \sup_{x \in \Omega} \|\nabla f_{\theta_1}(x) - \nabla f_{\theta_2}(x)\|_\infty \vee \sup_{x \in \Omega} \|\nabla^2 f_{\theta_1}(x) - \nabla^2 f_{\theta_2}(x)\|_\infty \\ & = O(W^{\frac{3^{L-1}-1}{2}} (B \vee d)^{\frac{5 \cdot 3^{L-1}-1}{2}}) \cdot \|\theta_1 - \theta_2\|_\infty. 
     \end{align*}
\end{lemma}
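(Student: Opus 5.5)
The plan is a layer-by-layer induction that tracks, at the same time, the size of the pre-activations together with their first and second spatial derivatives, and the size of their differences between $\theta_1$ and $\theta_2$. Write $z^{(1)}_\theta(x)=W^{(1)}x+b^{(1)}$ and $z^{(l+1)}_\theta=W^{(l+1)}\sigma_3(z^{(l)}_\theta)+b^{(l+1)}$, so that $f_\theta=z^{(L)}_\theta$. Since $\Omega$ is bounded, we may assume $\|x\|_\infty\le 1$ after rescaling (this only changes constants). For $l\in[L]$ set
\begin{align*}
M_l &\coloneqq \max_{i}\sup_{x\in\Omega}\max\bigl\{|z^{(l)}_{\theta,i}|,\ \|\nabla z^{(l)}_{\theta,i}\|_\infty,\ \|\nabla^2 z^{(l)}_{\theta,i}\|_\infty\bigr\},\\
\Delta_l &\coloneqq \max_{i}\sup_{x\in\Omega}\max\bigl\{|z^{(l)}_{\theta_1,i}-z^{(l)}_{\theta_2,i}|,\ \|\nabla(\cdot)\|_\infty,\ \|\nabla^2(\cdot)\|_\infty\bigr\}.
\end{align*}
Here $M_l$ is taken as a supremum over $\theta\in\Theta(L,W,S,B)$. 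The base case is immediate: $M_1\le dB+B\lesssim d B$. The first layer is affine in $x$, so its Hessian vanishes and its gradient is $W^{(1)}$, giving $\Delta_1\lesssim d\,\|\theta_1-\theta_2\|_\infty$.

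For the inductive step I would use the chain rule for $a=\sigma_3(z)$:
\begin{align*}
\nabla a &= 3\sigma_2(z)\nabla z,\\
\nabla^2 a &= 6\sigma_1(z)\,\nabla z\,\nabla z^{\top}+3\sigma_2(z)\nabla^2 z .
\end{align*}
Each of $a$, $\nabla a$, $\nabla^2 a$ is then a cubic-degree expression in $(z,\nabla z,\nabla^2 z)$, so its size is $\lesssim M_l^3$. Multiplying by $W^{(l+1)}$, which sums $W$ terms with entries bounded by $B$, and adding the bias gives
\[
M_{l+1}\lesssim W B M_l^3 + B .
\]
For the differences I write $W_1a_1-W_2a_2=(W_1-W_2)a_1+W_2(a_1-a_2)$. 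The difference of the cubic expressions is bounded, via $|p^3-q^3|\le 3\max(p,q)^2|p-q|$ and its analogues for the derivative formulas, by $M_l^2\Delta_l$. This yields
\[
\Delta_{l+1}\lesssim W M_l^3\|\theta_1-\theta_2\|_\infty + W B M_l^2\Delta_l+\|\theta_1-\theta_2\|_\infty .
\]

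Solving the first recursion with $B\vee d$ in place of $B$ gives
\[
M_l\lesssim W^{\frac{3^{l-1}-1}{2}}(B\vee d)^{\frac{3^{l}-1}{2}} .
\]
To check the induction step, note $3\cdot\frac{3^{l-1}-1}{2}+1=\frac{3^l-1}{2}$ for the exponent of $W$, and similarly for $B\vee d$. Substituting this into the second recursion and unrolling it, the dominant term is the product of the growth factors $WBM_k^2$ over $k\ge 2$, multiplied by $WM_1^3$ or by $\Delta_1$. Collecting exponents should reproduce the exponent $\frac{3^{L-1}-1}{2}$ on $W$ and $\frac{5\cdot 3^{L-1}-1}{2}$ on $B\vee d$ claimed in Lemma \ref{lipschitzness}. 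Since $L=O(1)$, the absolute constants absorbed at each step remain $O(1)$.

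The main obstacle I expect is the exponent bookkeeping. The naive unrolling may produce a slightly different power of $(B\vee d)$, so I would first verify the recursion for $L=2$ and $L=3$ to pin down exactly where each factor of $B$ enters. If the constants do not match, I would settle for any bound of the form $W^{c_1^L}(B\vee d)^{c_2^L}$. That weaker form suffices for every later use in the paper: because $L$ is fixed, only $\log C_p\lesssim \log n$ enters the entropy and prior-mass calculations. Finally, a minor point: the lemma is stated for $f_\theta$ without the clip, and the version with $\clip\circ f_\theta$ used in \eqref{def_cp} follows by the same chain rule, since $\clip$ is $C^2$ with bounded derivatives.
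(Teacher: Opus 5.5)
Your proposal is correct and is the same layer-by-layer induction on values, gradients and Hessians that underlies the cited result of \cite{lu2021machine} (the paper gives no proof beyond the citation). The bookkeeping you flag as the main obstacle also closes with no fallback: with $d$ absorbed into constants your recursions give $\Delta_L\lesssim W^{\frac{3^{L-1}-1}{2}}(B\vee d)^{\frac{3^{L}-1}{2}}\|\theta_1-\theta_2\|_\infty$, which is already stronger than stated because $3^L\le 5\cdot 3^{L-1}$ and $B\vee d\ge 1$; if you keep $d$ explicit via $M_1\le (dD+1)B\lesssim (B\vee d)^2$ for $\Omega\subset[-D,D]^d$, the $(B\vee d)$-exponent of $M_l$ follows $c_{l+1}=3c_l+1$ with $c_1=2$, so $c_l=\frac{5\cdot 3^{l-1}-1}{2}$, and that of $\Delta_l$ stays at most $c_l$, which is exactly the stated exponent; and in your aside on $\clip\circ f_\theta$, bounded derivatives of $\clip$ do not control the Hessian difference, so you need $\clip''$ Lipschitz (true, since $\clip$ is a finite combination of $\sigma_3$'s), and the constant then gains a factor $(1+M_L)^2$, consistent with the $(L+1)$-layer exponents in \eqref{def_cp}.
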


\subsection{Auxiliary tools for Proposition \ref{generalization}}
\paragraph{Rademacher complexity} The Rademacher complexity of a given function class $\G$ is given by 
\begin{align*}
    R_n(\G) \coloneqq \E \left[\sup_{g \in \G} \left|\frac{1}{n} \sum_{j=1}^n \tau_j g(Z_j)\right|\right], 
\end{align*}
where $Z_j$ are i.i.d. samples according to the some distributions and $\tau_j$ are i.i.d. Rademacher random variables. 

\begin{lemma}[{\citep[Theorem 3.3.9]{gine2021mathematical}}] \label{talagrand's}
    Let $\F$ be a countable set of real-valued functions on a measurable space $(S, \mathcal{S}, \mu)$ such that for all $f \in \F$, we have $\|f\|_\infty \leq U < \infty$, $\E_\mu[f] = 0$, and $\E_\mu[f^2] = \sigma^2 \leq U^2$. Let $n \in \mathbb{N}$ and take $X_1, \ldots, X_n \overset{i.i.d.}{\sim} \mu$. 
    Then for any $t > 0$, we have
    \begin{align*}
        \sup_{f \in \F} \frac{1}{n} \sum_{i=1}^n f(X_i) \leq 2\E\left[\sup_{f \in \F} \frac{1}{n} \sum_{i=1}^n f(X_i)\right] + \frac{4 U t}{3 n} + \sqrt{\frac{2\sigma^2 t}{n}}
    \end{align*}
    with probability at least $1 - e^{-t}$. 
\end{lemma}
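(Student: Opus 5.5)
We deduce the stated form from Bousquet's version of Talagrand's inequality, which we would either cite or re-derive by the entropy method. By monotone convergence over an increasing sequence of finite subclasses exhausting the countable class $\F$, it suffices to treat finite $\F$. Set
\begin{equation*}
Z \coloneqq \sup_{f \in \F} \sum_{i=1}^n f(X_i).
\end{equation*}
Bousquet's inequality states that for all $t>0$,
\begin{equation*}
\mathbb{P}\Big( Z \geq \E Z + \sqrt{2 v t} + \tfrac{U t}{3} \Big) \leq e^{-t}, \qquad v \coloneqq n\sigma^2 + 2U\,\E Z .
\end{equation*}
Granting this, the statement follows by elementary algebra. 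Using $\sqrt{a+b}\le \sqrt a+\sqrt b$ and then $2\sqrt{xy}\le x+y$,
\begin{equation*}
\sqrt{2vt} \leq \sqrt{2n\sigma^2 t} + 2\sqrt{U t\,\E Z} \leq \sqrt{2n\sigma^2 t} + \E Z + U t .
\end{equation*}
Hence, with probability at least $1-e^{-t}$,
\begin{equation*}
Z \le 2\E Z + \sqrt{2n\sigma^2 t} + \tfrac{4}{3}Ut .
\end{equation*}
Dividing by $n$ gives exactly the displayed bound.

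To prove Bousquet's inequality itself, we would follow the entropy method. First, write $\mathrm{Ent}(e^{\lambda Z})$ and tensorize it over the independent coordinates $X_1,\dots,X_n$. Second, for each $k$, compare $Z$ with the leave-one-out quantity $Z_k \coloneqq \sup_f \sum_{i\ne k} f(X_i)$, with the supremum attained at some $\hat f$. This comparison gives $Z - Z_k \le \hat f(X_k) \le U$ and
\begin{equation*}
\sum_k (Z - Z_k) \le Z .
\end{equation*}
Third, apply the modified logarithmic Sobolev inequality $\mathrm{Ent}(e^{\lambda Z}) \le \sum_k \E[e^{\lambda Z}\psi(-\lambda(Z-Z_k))]$ with $\psi(x)=e^x-x-1$. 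Combined with the centering $\E f=0$ and the variance bound $\E f^2 \le \sigma^2$, this yields a differential inequality for
\begin{equation*}
F(\lambda) \coloneqq \log \E e^{\lambda(Z-\E Z)},
\end{equation*}
namely
\begin{equation*}
F(\lambda) \le \frac{v}{U^2}\,\phi(\lambda U), \qquad \phi(x)=e^x-x-1 .
\end{equation*}
We would integrate this inequality Herbst-style. A Chernoff bound then gives a Bennett-type tail, which relaxes to the Bernstein form $\sqrt{2vt}+Ut/3$.

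The main obstacle is the third step. The term $2U\,\E Z$ in $v$ arises from self-bounding: the per-coordinate increments are not centered at the optimizer of $Z$, so their squares are bounded only by $\sigma^2$ plus a term of size $U(Z-Z_k)$. This yields a sum controlled by $U Z$ rather than $U\,\E Z$, and closing the resulting differential inequality in $\lambda$ with the sharp constants requires the careful analysis of Bousquet (or Klein–Rio). Since the paper only uses the inequality up to constants, a practical alternative is to cite \citep[Theorem 3.3.9]{gine2021mathematical} for this step and present only the algebraic reduction above.
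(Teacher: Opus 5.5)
Your proposal is correct and follows the paper's proof: the paper also cites \citep[Theorem 3.3.9]{gine2021mathematical} in Bousquet's form with $v_n = 2U\E Z + n\sigma^2$, then uses $\sqrt{a+b}\le\sqrt a+\sqrt b$ and AM--GM to get $\sqrt{2v_n t}\le \E Z + Ut + \sqrt{2n\sigma^2 t}$, exactly as you do. Your entropy-method sketch of Bousquet's inequality is not needed, and the AM--GM step is valid because $\E Z \ge \sup_f \E\sum_i f(X_i) = 0$.
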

\begin{proof}
    By \citep[Theorem 3.3.9]{gine2021mathematical}, the following holds with probability at least $1 - e^{-t}$
    \begin{align*}
        \sup_{f \in \F} \frac{1}{n} \sum_{i=1}^n f(X_i) < \E\left[\sup_{f \in \F} \frac{1}{n} \sum_{i=1}^n f(X_i)\right] + \frac{\sqrt{2 v_n t}}{n} + \frac{U t}{3 n},
    \end{align*}
    where
     \begin{align*}
        v_n = 2 U \E\left[\sup_{f \in \F} \sum_{i=1}^n f(X_i)\right] + n \sigma^2. 
    \end{align*}
    Then using $\sqrt{a + b} \leq \sqrt{a} + \sqrt{b}$ and AM-GM inequality, we have 
    \begin{align*}
        \sqrt{2v_n t} \leq \sqrt{4U \E\left[\sup_{f \in \F} \sum_{i=1}^n f(X_i)\right] t } + \sqrt{2 n\sigma^2 t} \leq \E\left[\sup_{f \in \F} \sum_{i=1}^n f(X_i)\right] + Ut + \sqrt{2 n \sigma^2 t}. 
    \end{align*}
    Plugging this back and rearranging terms yield the result.
\end{proof}

\begin{lemma}[{\citep[Corollary 4]{maurer2016vector}}] \label{ledoux_talagrand}
    Let $\mathcal{X}$ be any set, and $(x_1, \ldots, x_n) \subset \mathcal{X}^n$. Let $\mathcal{F}$ be a class of functions $f: \mathcal{X} \to \ell_2$ and let $h_i: \ell_2 \to \R$ be $C_L$-Lipschitz. Then 
    \begin{align*}
        \E \sup_{f \in \mathcal{F}} \sum_{i} \epsilon_i h_i(f(x_i)) \leq \sqrt{2} C_L \E \sup_{f \in \mathcal{F}} \sum_{i, k} \epsilon_{ik} f_k(x_i),
    \end{align*}
    where $\epsilon_{ik}$ is an independent doubly indexed Rademacher sequence and $f_k(x_i)$ is the $k$-th component of $f(x_i)$.
\end{lemma}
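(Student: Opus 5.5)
The statement to prove is the vector-valued contraction inequality (Lemma \ref{ledoux_talagrand}). I would not reprove it from scratch but follow Maurer's argument. The structure is induction on the number of sample points $n$: the Rademacher variables $\epsilon_i$ are replaced one at a time by the Gaussian-free doubly indexed sequence $(\epsilon_{ik})_k$. Fix the other terms and write, for a generic functional $R(f)$ (which collects $\sum_{i\ge 2}\epsilon_i h_i(f(x_i))$ or the already-replaced terms), the key one-step inequality
\begin{equation*}
\E \sup_{f} \bigl(R(f) + \epsilon\, h(f(x))\bigr) \le \E \sup_f \Bigl(R(f) + \sqrt{2}\, C_L \sum_k \epsilon_k f_k(x)\Bigr).
\end{equation*}
Applying this successively for $i=1,\dots,n$, with the remaining randomness conditioned on (Fubini), gives the claim. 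To avoid measurability issues I would take $\mathcal{F}$ finite, and reduce the general case by monotone limits over finite subclasses.

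For the one-step inequality, the left side equals $\tfrac12\sup_{f,g}\bigl(R(f)+R(g)+h(f(x))-h(g(x))\bigr)$. The Lipschitz property bounds the increment by $C_L\|f(x)-g(x)\|_{\ell_2}$. The main step is then a Khintchine-type lower bound: for any vector $v\in\ell_2$,
\begin{equation*}
\|v\|_2 \le \sqrt{2}\,\E\Bigl|\sum_k \epsilon_k v_k\Bigr|,
\end{equation*}
which is Khintchine's inequality with Szarek's optimal constant $1/\sqrt2$ for $p=1$. With $v=f(x)-g(x)$, the Lipschitz bound and the Khintchine bound together give
\begin{equation*}
\tfrac12\sup_{f,g}\Bigl(R(f)+R(g)+\sqrt2 C_L\,\E\bigl|\textstyle\sum_k\epsilon_k(f_k(x)-g_k(x))\bigr|\Bigr).
\end{equation*}
Next, move the expectation outside the supremum, using $\sup\E\le\E\sup$. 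The absolute value can be dropped because the expression is symmetric in $(f,g)$. Splitting the resulting supremum then yields $\E\sup_f\bigl(R(f)+\sqrt2 C_L\sum_k\epsilon_k f_k(x)\bigr)$, as needed.

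The main obstacle is the sharp Khintchine constant $\sqrt2$. Any constant would suffice for the paper's use, so I would simply cite Szarek's inequality; alternatively, a crude bound via $\E|\cdot|\ge(\E|\cdot|^2)^{3/2}/(\E|\cdot|^4)^{1/2}$ gives the constant $\sqrt3$. Infinite-dimensional $\ell_2$ is handled by truncating to finitely many coordinates and passing to the limit.
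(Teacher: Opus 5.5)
Your proposal is correct and is essentially Maurer's own argument, which the paper invokes only by citation. The steps are the same: the one-step bound via the identity $\E\sup_f\bigl(R(f)+\epsilon h(f(x))\bigr)=\tfrac12\sup_{f,g}\bigl(R(f)+R(g)+h(f(x))-h(g(x))\bigr)$, Szarek's sharp constant in Khintchine's inequality, the $(f,g)$-symmetry to drop the absolute value, and then induction over the sample points, with measurability handled through finite subclasses and infinite-dimensional $\ell_2$ through truncation.
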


\begin{lemma}[{\citep[Lemma 6.3.2]{vershynin2018high}}] \label{symmetrization}
    Let $\F$ be a class of functions and $\{X_i\}_{i=1}^n$ be independent and identically distributed random variables.
    Then
    \begin{align*}
        \E \sup_{f \in \F }\left| \frac{1}{n} \sum_{i=1}^n f(X_i) - \E[f]\right| \leq 2R_n(\F). 
    \end{align*}
\end{lemma}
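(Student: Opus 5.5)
The plan is the classical ghost-sample symmetrization argument. Let $X_1',\dots,X_n'$ be an independent copy of $X_1,\dots,X_n$, also independent of everything else, and write $\E'$ for expectation over the primed variables only. For each fixed $f\in\F$ we have $\E[f]=\E'\big[\frac1n\sum_{i=1}^n f(X_i')\big]$. Substituting this gives
\begin{align*}
\E\sup_{f\in\F}\Big|\frac1n\sum_{i=1}^n f(X_i)-\E[f]\Big|
=\E\sup_{f\in\F}\Big|\E'\Big[\frac1n\sum_{i=1}^n \big(f(X_i)-f(X_i')\big)\Big]\Big|.
\end{align*}
Since $|\cdot|$ and $\sup$ are convex, Jensen's inequality (equivalently, $\sup|\E'\cdot|\le \E'\sup|\cdot|$) moves $\E'$ outside. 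This bounds the left side by $\E\,\E'\sup_{f}\big|\frac1n\sum_i (f(X_i)-f(X_i'))\big|$.

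Next I introduce i.i.d. Rademacher signs $\tau_i$ independent of both samples. For each $i$, swapping $X_i$ and $X_i'$ leaves the joint law unchanged, so the random vector $\big(f(X_i)-f(X_i')\big)_{i,\,f}$ has the same joint distribution (jointly over all $f\in\F$) as $\big(\tau_i(f(X_i)-f(X_i'))\big)_{i,\,f}$. First condition on $\tau$, then average over $\tau$. This gives
\begin{align*}
\E\,\E'\sup_{f}\Big|\frac1n\sum_i \big(f(X_i)-f(X_i')\big)\Big|=\E\,\E'\,\E_\tau\sup_{f}\Big|\frac1n\sum_i \tau_i\big(f(X_i)-f(X_i')\big)\Big|.
\end{align*}

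Finally I apply the triangle inequality inside the supremum and use $\sup(a+b)\le\sup a+\sup b$. This splits the right-hand side into two terms, one involving only $X$ and one only $X'$, each of the form $\E\sup_f\big|\frac1n\sum_i\tau_i f(Z_i)\big|$. Because $-\tau_i$ has the same law as $\tau_i$, the sign in front of the $X'$ term does not matter. Each term therefore equals $R_n(\F)$ exactly as defined in the paper, with the absolute value included. This yields the bound $2R_n(\F)$.

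The only genuinely delicate point is measurability of the suprema, which is needed for Fubini and for the distributional-equality step. I would handle it by assuming $\F$ is countable or pointwise separable, as in Lemma \ref{talagrand's}. This holds for the network classes used in the paper: they are continuous images of parameter sets, so the supremum can be taken over a countable dense set of parameters. The remaining steps are routine.
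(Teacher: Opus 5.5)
Your proof is correct. It is the standard ghost-sample symmetrization: Jensen over an independent copy, sign flips via exchangeability of $X_i$ and $X_i'$, then the triangle inequality. This is the argument behind the result the paper simply cites from Vershynin without giving a proof of its own.

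Because you work with the uncentered $f$ and split by the triangle inequality, you obtain the bound directly in terms of $R_n(\F)$ as the paper defines it (with the absolute value). Your countable-dense-parameter remark adequately handles measurability for the uniformly bounded, continuously parametrized network classes to which the lemma is applied.
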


\begin{lemma}[{\citep[Lemma A.7]{lu2021machine}}] \label{peeling_lemma}
    Consider a function class $\F$ and assume that there exists a sub-root function such that for any $r > 0$
    \begin{align*}
        R_n(\{u \in \F: \E[u] \leq r\}) \leq \phi(r).
    \end{align*}
    Then we have 
    \begin{align*}
        \E \left[\sup_{u \in \F} \sum_{i=1}^n \sigma_i \frac{u(X_i)}{\E[u] + r }\right] \leq \frac{4 \phi(r)}{r},
    \end{align*}
    where $\{\sigma_i\}_{i=1}^n$ are independent Rademacher random variables and $\{X_i\}_{i=1}^n$ are independent and identically distributed random variables. 
\end{lemma}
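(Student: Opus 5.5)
The statement is the peeling lemma: from the sub-root bound $R_n(\{u\in\F:\E[u]\le r\})\le\phi(r)$ we must deduce $\E\sup_{u\in\F}\frac1n\sum_i\sigma_i\frac{u(X_i)}{\E[u]+r}\le\frac{4\phi(r)}{r}$. Here I read the sum as normalized by $1/n$, matching the definition of $R_n$. The functions are nonnegative, since they are squared residuals. The plan is the standard geometric peeling argument: slice $\F$ into shells according to the size of $\E[u]$, and on each shell bound the weight $1/(\E[u]+r)$ from above.

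\textbf{Step 1: slicing.} Fix $\lambda=4$. Define
\[
\F_0=\{u:\E[u]\le r\},\qquad \F_k=\{u: r\lambda^{k-1}<\E[u]\le r\lambda^k\},\quad k\ge1 .
\]
Nonnegativity of $\E[u]$ guarantees that these shells cover $\F$.

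\textbf{Step 2: bound the supremum by a sum over shells.} Use $\sup_{\cup_k\F_k}\le\sum_k\sup_{\F_k}$, applied to the absolute value (which the Rademacher complexity uses anyway), so that each term is nonnegative. This gives
\[
\E\sup_{u\in\F}\Big|\tfrac1n\sum_i\sigma_i\tfrac{u(X_i)}{\E[u]+r}\Big|\le\sum_{k\ge0}\E\sup_{u\in\F_k}\Big|\tfrac1n\sum_i\sigma_i\tfrac{u(X_i)}{\E[u]+r}\Big| .
\]

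\textbf{Step 3: bound each shell.} The key point is removing the $u$-dependent weight $1/(\E[u]+r)$. On $\F_k$ this weight lies in $(0,c_k]$, where $c_0=1/r$ and $c_k=1/(r(1+\lambda^{k-1}))\le 1/(r\lambda^{k-1})$.

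For each fixed $u$, the quantity $|\sum_i\sigma_i a_u u(X_i)|$ with $a_u\in[0,c_k]$ is at most $c_k|\sum_i\sigma_iu(X_i)|$, because scaling by a constant in $[0,c_k]$ scales the absolute value. Taking suprema over $u$ preserves this. Hence the $k$-th term is at most
\[
c_k R_n(\{u:\E[u]\le r\lambda^k\})\le c_k\,\phi(r\lambda^k).
\]
Because $\phi$ is sub-root, $\phi(4^kr)\le 2^k\phi(r)$. The $k$-th term for $k\ge1$ is therefore at most $\frac{2^k\phi(r)}{r4^{k-1}}=\frac{4\phi(r)}{r}2^{-k}$. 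The $k=0$ term is at most $\phi(r)/r$.

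\textbf{Step 4: sum the series.} Summing gives
\[
\frac{\phi(r)}{r}\Big(1+4\sum_{k\ge1}2^{-k}\Big)=\frac{5\phi(r)}{r}.
\]

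The main obstacle is this constant: the crude bound yields $5$ rather than the stated $4$. To recover exactly $4\phi(r)/r$, I would tighten the shell bound using $1+\lambda^{k-1}$ instead of $\lambda^{k-1}$, or optimize the ratio $\lambda$. The cleaner alternative is the convexity/star-hull route of Bartlett--Bousquet--Mendelson. There one writes $\frac{r}{\E[u]+r}u=\alpha u$ with $\alpha\le 1$ and $\E[\alpha u]\le r\lambda^{k}\alpha$, and uses that sub-root $\phi$ makes $\phi(s)/\sqrt s$ nonincreasing, which gives the sharper per-shell bound $c_k\phi(r\lambda^k)$ with a better-summing series. Since the constant is immaterial downstream (Lemma~\ref{bounding_generalizaiton1} only needs $O(\phi(r)/r)$), I would state the bound with a universal constant and, if needed, check that a careful choice of $\lambda$ yields $4$.
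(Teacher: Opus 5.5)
Your proof is correct and is the standard geometric peeling argument behind the cited Lemma A.7 of Lu et al.; the paper itself imports that lemma without proof. Steps 1--3 are sound as written. Passing to absolute values is legitimate because the paper defines $R_n$ with them. The nonnegativity of $\E[u]$ that you invoke is genuinely needed: without it the weight $1/(\E[u]+r)$ is unbounded on the bottom shell. It holds in the application, where $h\ge 0$.

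The one loose end is the constant, which you left unverified. Your first suggestion closes it with $\lambda=4$ unchanged. If you keep $c_k=1/(r(1+4^{k-1}))$ instead of discarding the $1$, then
\[
\frac{\phi(r)}{r}\Big(1+\sum_{k\ge 1}\frac{2^k}{1+4^{k-1}}\Big)
\le \frac{\phi(r)}{r}\Big(1+1+\frac{4}{5}+\sum_{k\ge 3}\frac{2^k}{4^{k-1}}\Big)
=\frac{19}{5}\cdot\frac{\phi(r)}{r}\le \frac{4\phi(r)}{r},
\]
so the star-hull detour is unnecessary. The per-shell bound it would give, $c_k\phi(r\lambda^k)$, is exactly the one you already have.
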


\section{Construction of the cutoff function} \label{construction_cutoff}
In this section, we construct a $C^2$-cutoff function using $\sigma_3$. Consider a set of nodes $\{t_i\}_{i=1}^8$ satisfying $t_1 = -2, t_2 = -\frac{7}{4}, t_3 = -\frac{3}{2}, t_1 = -1$, and $t_{9-i} = -t_{i}, i \in [4]$. Define the cutoff function as
\begin{equation*}
    \phi(x) \coloneqq b + \sum_{i=1}^8 a_i \sigma_3(x - t_i),
\end{equation*}
for some weights $\{a_i\}_{i=1}^8$ and bias $b$ to be determined. Moreover, we would like $\phi$ to satisfy $\phi(x) = x$, when $-1 < x < 1$, $\phi(x) = -2$ when $x \leq -2$, $\phi(x) = 2$ when $x \geq 2$. 

Note that when $-1 < x < 1$, $\phi(x) = b + \sum_{i=1}^4 a_i \sigma_3(x - t_i)$. By imposing $\phi(x) = x$ when $-1 < x < 1$, we have
\begin{equation*}
\begin{aligned}
    \phi'''(x) = 0 \quad  & \Rightarrow \quad  & \sum_{i=1}^4 a_i = 0,\\
    \phi''(x) = 0 \quad & \Rightarrow \quad & 6\sum_{i=1}^4 a_i(x - t_i) = 0, \\
    \phi'(x) = 1 \quad & \Rightarrow \quad & 3\sum_{i=1}^4 (x - t_i)^2 = 1.
\end{aligned}
\end{equation*}
Simplifying these three equations gives 
\begin{equation*}
    \sum_{i=1}^4 a_i = 0, \quad \sum_{i=1}^4 a_i t_i =0, \quad \sum_{i=1}^4 a_i t_i^2 = \frac{1}{3}. 
\end{equation*}
Also we need 
\begin{gather*}
    \phi(-1) = b + \sum_{i=1}^3 a_i(x - t_i)^3 = -1 \\
    \phi(x) = b = -2, \quad x \leq -2. 
\end{gather*}
Solving the equations above yields 
\begin{equation*}
    b = -2, \quad a_1 = \frac{14}{3}, \quad a_2 = -\frac{32}{3}, \quad, a_3 = \frac{20}{3}, \quad a_4 = -\frac{2}{3}. 
\end{equation*}
We further set $a_{9-i} = -a_{i}, i \in [4]$.

So far, we construct a $C^2$-function $\phi$ such that $\phi(x) = -2$ when $x \leq -2$, and $\phi(x) = x$ when $-1 < x < 1$. We now check that $\phi(x) = 2$ on the region $x \geq 2$. By direct calculation, when $x \geq 2$, we have
\begin{align*}
    \phi(x) & = b + \sum_{i=1}^4 a_i (x - t_i)^3 + \sum_{i=1}^4 (-a_i) (x + t_i)^3  \\
    & = b + \sum_{i=1}^4 a_i (-2 t_i) (3x^2 + t_i^2) \\
    & = b - 2\sum_{i=1}^4 a_i t_i^3 = -2, 
\end{align*}
where the third equation follows from the construction that $\sum_{i=1}^4 a_i t_i = 0$. 

Given  $F>0$, we can construct a $F$-clipped function by 
\begin{equation*}
    \clip_F(x) \coloneqq F \cdot \phi (\frac{x}{F}) = bF + \sum_{i=1}^8 a_i F \cdot \sigma_3(\frac{x}{F} - t_i),
\end{equation*}
satisfying $\clip_F(x) = x$ when $-F < x < F$, $\clip_F(x) = 2F$ when $x \geq 2F$, and $\clip_F(x) = -2F$ when $x \leq -2F$. 


\end{document}